\def\@settitle{%
  \vspace*{-20pt}
  \begin{flushleft}%
    \baselineskip14\p@\relax
    \normalfont\bfseries\LARGE
%    \uppercasenonmath\@title
    \@title
  \end{flushleft}%
}
\def\@setauthors{%
  \begingroup
  \def\thanks{\protect\thanks@warning}%
  \trivlist
  %\centering
  \large \@topsep30\p@\relax
  \advance\@topsep by -\baselineskip
  \item\relax
  \author@andify\authors
  \def\\{\protect\linebreak}%
%  \MakeUppercase{\authors}%
  \authors
  \ifx\@empty\contribs
  \else
    ,\penalty-3 \space \@setcontribs
    \@closetoccontribs
  \fi
  \normalfont
  %\@setaddresses
  \endtrivlist
  \endgroup
}
\def\@setaddresses{\par
  \nobreak \begingroup\raggedright
  \small
  \def\author##1{\nobreak\addvspace\smallskipamount}%
  \def\\{\unskip, \ignorespaces}%
  \interlinepenalty\@M
  \def\address##1##2{\begingroup
    \par\addvspace\bigskipamount\noindent
    \@ifnotempty{##1}{(\ignorespaces##1\unskip) }%
    {\ignorespaces##2}\par\endgroup}%
  \def\curraddr##1##2{\begingroup
    \@ifnotempty{##2}{\nobreak\noindent\curraddrname
      \@ifnotempty{##1}{, \ignorespaces##1\unskip}\/:\space
      ##2\par}\endgroup}%
  \def\email##1##2{\begingroup
    \@ifnotempty{##2}{\smallskip\nobreak\noindent E-mail address%
      \@ifnotempty{##1}{, \ignorespaces##1\unskip}\/:\space
      \ttfamily##2\par}\endgroup}%
  \def\urladdr##1##2{\begingroup
    \def~{\char`\~}%
    \@ifnotempty{##2}{\nobreak\noindent\urladdrname
      \@ifnotempty{##1}{, \ignorespaces##1\unskip}\/:\space
      \ttfamily##2\par}\endgroup}%
  \addresses
  \endgroup
  \global\let\addresses=\@empty
}
\def\@setabstracta{%
    \ifvoid\abstractbox
  \else
    \skip@25\p@ \advance\skip@-\lastskip
    \advance\skip@-\baselineskip \vskip\skip@
%    \hrule\vskip2pt
    \box\abstractbox
    \prevdepth\z@ % because \abstractbox is a vtop
%    \vskip2pt\hrule
    \vskip-10pt
  \fi
}
\renewenvironment{abstract}{%
  \ifx\maketitle\relax
    \ClassWarning{\@classname}{Abstract should precede
      \protect\maketitle\space in AMS document classes; reported}%
  \fi
  \global\setbox\abstractbox=\vtop \bgroup
    \normalfont\small
    \list{}{\labelwidth\z@
      \leftmargin0pc \rightmargin\leftmargin
      \listparindent\normalparindent \itemindent\z@
      \parsep\z@ \@plus\p@
      
    }%
    \item[\hskip\labelsep\bfseries\abstractname.]%
}{%
  \endlist\egroup
  \ifx\@setabstract\relax \@setabstracta \fi
}
\def\section{\@startsection{section}{1}%
  \z@{-1.2\linespacing\@plus-.5\linespacing}{.8\linespacing}%
  {\normalfont\bfseries\large}}
\def\subsection{\@startsection{subsection}{2}%
  \z@{-.8\linespacing\@plus-.3\linespacing}{.3\linespacing\@plus.2\linespacing}%
  {\normalfont\bfseries}}
\def\subsubsection{\@startsection{subsubsection}{3}%
  \z@{.7\linespacing\@plus.1\linespacing}{-1.5ex}%
  {\normalfont\itshape}}
\def\@secnumfont{\bfseries}
\def\to{\mathchoice{\longrightarrow}{\rightarrow}{\rightarrow}{\rightarrow}}
\newcommand{\shortxra}[2][]{\ext@arrow 0359\rightarrowfill@{#1}{#2}}
\def\longrightarrowfill@{\arrowfill@\relbar\relbar\longrightarrow}
\newcommand{\longxra}[2][]{\ext@arrow 0359\longrightarrowfill@{#1}{#2}}
\renewcommand{\xrightarrow}[2][]{\mathchoice{\longxra[#1]{#2}}%
  {\shortxra[#1]{#2}}{\shortxra[#1]{#2}}{\shortxra[#1]{#2}}}
\def\Nopagebreak{\@nobreaktrue\nopagebreak}
\newtheoremstyle{theorem-giventitle}
        {}{}              %%% space between body and thm
        {\itshape}                      %%% Thm body font
        {}                              %%% Indent amount (empty = no indent)
        {\bfseries}                     %%% Thm head font
        {.}                             %%% Punctuation after thm head
        {\thm@headsep}                             %%% Space after thm head
        {\thmnote{\bfseries#3}}%%% Thm head spec
\newtheoremstyle{theorem-givenlabel}
        {}{}              %%% space between body and thm
        {\itshape}                      %%% Thm body font
        {}                              %%% Indent amount (empty = no indent)
        {\bfseries}                     %%% Thm head font
        {.}                             %%% Punctuation after thm head
        {\thm@headsep}                             %%% Space after thm head
        {\thmname{#1}~\thmnumber{#3}\setcurrentlabel{#3}}%%% Thm head spec
\newtheoremstyle{definition-giventitle}
        {}{}              %%% space between body and thm
        {}                      %%% Thm body font
        {}                              %%% Indent amount (empty = no indent)
        {\bfseries}                     %%% Thm head font
        {.}                             %%% Punctuation after thm head
        {\thm@headsep}                             %%% Space after thm head
        {\thmnote{\bfseries#3}}%%% Thm head spec
\def\setcurrentlabel#1{\gdef\@currentlabel{#1}}
\newcommand{\bp}{\begin{pmatrix}}
\newcommand{\ep}{\end{pmatrix}}
\newcommand{\be}{\begin{equation}}
\newcommand{\ee}{\end{equation}}
\newcommand{\ol}[1]{\overline{#1}}
\def\sbmatrix#1{\left[\begin{smallmatrix}#1\end{smallmatrix}\right]}
\numberwithin{equation}{section}
\theoremstyle{plain}
\newtheorem{theorem}[equation]{Theorem}
\newtheorem{lemma}[equation]{Lemma}
\newtheorem{proposition}[equation]{Proposition}
\newtheorem*{claim*}{Claim}
\theoremstyle{definition}
\newtheorem{remark}[equation]{Remark}
\newtheorem{definition}[equation]{Definition}
\numberwithin{equation}{section}
\theoremstyle{theorem-giventitle}
\newtheorem{theorem-named}{}
\def\Z{\mathbb Z}
\def\R{\mathbb R}
\def\Q{\mathbb Q}
\def\p{\partial}
\def\sm{\setminus}
\def\a{\alpha}
\def\bp{\begin{pmatrix}}
\def\ep{\end{pmatrix}}
\def\ba{\begin{array}}
\def\ea{\end{array}}
\def\bn{\begin{enumerate}}
\def\en{\end{enumerate}}
\def\mbf{\mathbf}
\DeclareMathOperator\Arf{Arf}
\DeclareMathOperator\cl{cl}
\DeclareMathOperator\Wh{Wh}
\DeclareMathOperator\Ext{Ext}
\DeclareMathOperator\Tor{Tor}
\DeclareMathOperator\Hom{Hom}
\DeclareMathOperator\ord{ord}
\def\Im{\operatorname{Im}}
\DeclareMathOperator\Bl{B\ell}
\DeclareMathOperator\inte{int}
\newcommand{\eps}{\varepsilon}
\begin{document}

\title{Whitney towers and abelian invariants of knots}

\author{Jae Choon Cha}
\address{
  Department of Mathematics\\
  POSTECH\\
  Pohang Gyeongbuk 37673\\
  Republic of Korea
  \linebreak
  School of Mathematics\\
  Korea Institute for Advanced Study \\
  Seoul 02455\\
  Republic of Korea
}
\email{jccha@postech.ac.kr}
\thanks{JCC was partially supported
by NRF grants 2013067043 and 2013053914.}

\author{Kent E. Orr}
\address{
  Department of Mathematics\\
  Indiana University \\
  Bloomington, IN 47405\\
  USA}
\email{korr@indiana.edu}
\thanks{KEO is supported by Simons Foundation Grant \# 430351}

\author{Mark Powell}
\address{
  Department of Mathematics\\ 
  Durham University\\
  United Kingdom
}
\email{mark.a.powell@durham.ac.uk}
\thanks{MP was supported by an NSERC Discovery grant.}

\def\subjclassname{\textup{2010} Mathematics Subject Classification}
\expandafter\let\csname subjclassname@1991\endcsname=\subjclassname
\expandafter\let\csname subjclassname@2000\endcsname=\subjclassname
\subjclass{%
 57M25%, % Knots and links in $S^3$
 57M27, % Invariants of knots and 3-manifolds
 57N13, % Topology of 4-manifolds
 57N70, % Cobordism and concordance (in low dimension)
%  57Q60; % Cobordism and concordance (in high dimension)
%  57M07, % Topological methods in group theory
}
\keywords{Whitney towers, Alexander polynomial, Arf invariant, Blanchfield form}

\begin{abstract}
  We relate certain abelian invariants of a knot, namely the Alexander
  polynomial, the Blanchfield form, and the Arf invariant, to
  intersection data of a Whitney tower in the 4-ball bounded by the
  knot.  We also give a new 3-dimensional algorithm for computing
  these invariants.
\end{abstract}

\maketitle

\section{Introduction}

We show that intersection data  in \emph{Whitney towers} determines
\emph{abelian invariants} of knots, particularly the Blanchfield form,
the Alexander polynomial, and the Arf invariant.

Briefly speaking, a \emph{Whitney tower} traces an iterated attempt to
alter an immersed disc in a 4-manifold to an embedded disc by Whitney
moves. Whitney towers naturally approximate an embedded disc.  In
particular, since the work of
Cochran-Orr-Teichner~\cite{Cochran-Orr-Teichner:1999-1}, Whitney
towers in 4-space have been commonly used to measure the degree to
which a knot fails to be slice.

Our main result algorithmically computes the Blanchfield form and the
Alexander polynomial of a knot using intersection data from an order
two twisted Whitney tower in the 4-disc bounded by the knot.
This relates two incarnations of the \emph{Arf invariant of a knot}
using a $4$-dimensional argument---one characterizing the Arf
invariant in terms of Whitney towers, the other in terms of the
Alexander polynomial.

\subsection{Intersection data from order two towers and abelian
  invariants}

The Seifert pairing provides a well-known method to compute a
presentation for the Alexander module of a knot~\cite{Seifert-1935}.
As a bonus, one easily computes the Alexander polynomial and the Arf
invariant.  The Seifert pairing also gives rise to
a formula for the Blanchfield form of the knot~\cite{Kearton:1975-1,
  Levine-77-knot-modules}.  See also~\cite{Friedl-Powell:2016-1}.

This paper takes a different approach, replacing the Seifert surface
with a Whitney tower in the $4$-disc. This approach promises many
advantages, among these that higher order Whitney towers may present
modules corresponding to nilpotent and solvable covers of the knot.

Every knot $K \subset S^3$ bounds an order two Whitney tower in $D^4$, as we
demonstrate in Section~\ref{section:presentation tower}.  Recall that
this means $K$ is the boundary of an immersed (order 0) disc, $D_0 \looparrowright D^4$,
with $d=2k$ self-intersections occurring in oppositely signed pairs.
Immersed Whitney discs (of order 1), $D^1_1 \cup \cdots \cup D^k_1$,
arise from each of $k$ cancelling pairs of intersection points.
Furthermore, (order two) discs pair order one intersections, which are
intersections between order 0 and order 1 discs.  That is, an order
two Whitney tower is built from immersed Whitney discs which pair all
intersections of order less than $2$ in the tower.

In a neighbourhood of each intersection point, two local discs, called
{\em sheets}, intersect transversely.  We will see that an order 2
Whitney tower can be improved as follows:

\begin{itemize}
\item[(i)] $\pi_1(D^4 \sm \nu D_0) \cong \mathbb Z$;
\item[(ii)] $D_0 \cap \inte D_1^j = \emptyset$ for each~$j$;
  that is, the tower has no order 1 intersections and thus has no
  order 2 discs.
\item[(iii)] For each disc, $D_1^j$, we can choose one of the two
  associated double points.  This double point comes with an immersed
  disc $A^j_1$ in $D^4 \sm \nu D_0$ bounded by a loop leaving the
  double point along one sheet of the intersection and returning to
  the double point along the other sheet; $A^j_i$ is called an {\em
    accessory} disc.
\end{itemize}
We remark that we do not impose any framing conditions on the $D_1^j$
nor on the $A_1^j$.  Experts will know how to construct such a Whitney
tower, but we include a complete proof in
Section~\ref{section:presentation tower}.

\begin{definition}
  \label{definition:pres-tower}
  An order two Whitney tower equipped with accessory discs, namely
  $D_0\cup \big(\bigcup_j D_1^j\big) \cup \big(\bigcup_j A_1^j\big)$,
  is an
    {\em order two presentation tower for~$K$} if the conditions above are satisfied.
\end{definition}

We will view such a tower as a geometric analogue of a presentation
matrix for the Alexander module, one which packages the abelian
invariants we study.

Now we describe such a presentation matrix, arising from the intersection data of the discs in a presentation tower.
Define $W:= D^4 \sm \nu D_0$ to be the exterior of the order zero disc.
The intersection pairing of transverse $2$-chains in $W$ takes values
in the group ring $\Z[\pi_1(W)] = \Z[\Z] \cong \Z[t,t^{-1}]$.
Let $e_{2i-1}=D_1^i$ and $e_{2i}=A_1^i$.  Let $\Lambda=(\lambda_{ij})$
be the $d\times d$ matrix over $\Z[t,t^{-1}]$ whose $(i,j)$-entry,
$\lambda_{ij}$, is the $\Z[t,t^{-1}]$-valued intersection of $e_i$
and~$e_j$. To define the diagonal entry $\lambda_{ii}$, which is the
intersection of $e_i$ and a push-off of $e_i$, we need a section of the
normal bundle of the (Whitney or accessory) disc, along which the push-off
 is taken.  For this purpose we use an extension of the
\emph{Whitney framing} and \emph{accessory framing} of the boundary of
the disc. A detailed description is given in
Sections~\ref{section:intersection-form}
and~\ref{section:framings-definitions}.  For now we remark that the
\emph{twisting} information of the order one Whitney discs and
accessory discs is reflected in these diagonal matrix entries.

Now let $E=(\varepsilon_{ij})$ be the $d\times d$ matrix given
by
\[
  \varepsilon_{ij} =
  \begin{cases}
    \text{the sign of $p$} & \text{if $i=j$ and $e_i$ is an accessory
      disc based at a double point $p$,}
    \\
    1 & \text{if one of $e_i$ and $e_j$ is an accessory disc for a
      double point $p$}
    \\
    & \text{\quad and the other is a Whitney disc with $p$ on the boundary,}
    \\
    0 & \text{otherwise.}
  \end{cases}
\]
 Define $\Omega := z\Lambda+E$
where $z:=(1-t)(1-t^{-1})$.
We say that two polynomials in $\Z[t,t^{-1}]$ are equal \emph{up to norms and units} if they agree in the quotient of $\Z[t,t^{-1}]$ by the multiplicative subgroup
\[\{\pm t^k f(t)f(t^{-1})\,|\, k \in \Z, f(t) \in \Z[t,t^{-1}], |f(1)|=1\}.\]

In the following theorem we show that the matrix $\Omega$ presents the Blanchfield
pairing (see Definition~\ref{defn:presentation-of-blanchfield-form})
up to Witt equivalence, and thus determines the Alexander polynomial
up to norms and units.

\begin{theorem}
  \label{theorem:compute-alex-poly-from-whitney-data}
  The matrix $\Omega$ is a presentation matrix for a linking form Witt
  equivalent to the Blanchfield form of~$K$.  The determinant of
  $\Omega$ equals the Alexander polynomial of $K$, $\Delta_K(t)$, up
  to norms and units.
\end{theorem}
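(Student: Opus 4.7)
The plan is to realize $\Omega$ as the matrix of the equivariant intersection form on $H_2$ of a suitable $4$-manifold built from the presentation tower, and then to invoke the standard relationship between this form and the Blanchfield pairing of the boundary knot.

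The first step constructs a $4$-manifold $V$ with $\pi_1(V)\cong\Z$ and $\partial V=X_K:=S^3\sm\nu K$. One starts with $W=D^4\sm\nu D_0$, which by hypothesis (i) already has $\pi_1(W)\cong\Z$. The boundary $\partial W$ consists of $X_K$ together with one ``double-point tube'' for each cancelling pair of self-intersections of $D_0$. The accessory discs provide distinguished caps for meridional curves associated to the chosen double points; appropriate handle attachments along these curves convert $W$ into a $4$-manifold $V$ with $\partial V=X_K$ and $\pi_1(V)\cong\Z$. A careful analysis of the resulting handle decomposition shows that $H_2(V;\Z[t,t^{-1}])$ is generated by the $d=2k$ classes represented by the Whitney and accessory discs, each completed to a closed $2$-cycle using its Whitney or accessory framing.

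The second step computes the equivariant intersection form in this basis. Interior intersections in $W$ contribute the matrix $\Lambda$ in the usual way, but each disc boundary is null-homologous in the cover $\wt V$ only after multiplication by the meridian $1-t$; consequently each interior pairing $\lambda_{ij}$ is weighted by $z=(1-t)(1-t^{-1})$, giving the contribution $z\Lambda$. The remaining boundary structure near the double points of $D_0$ contributes the integer matrix $E$: an accessory disc meets a push-off of itself once, with sign equal to the sign of its double point, and an accessory disc at a double point $p$ links the unique Whitney disc with $p$ on its boundary exactly once. Summing the two contributions yields exactly $\Omega=z\Lambda+E$.

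The third step invokes the classical principle that the equivariant intersection form on $H_2$ of such a $4$-manifold---with $\pi_1(V)\cong\Z$, $\partial V=X_K$, and intersection form nondegenerate over $\Q(t)$---presents the Blanchfield form of $K$ up to Witt equivalence; see~\cite{Friedl-Powell:2016-1}. The nondegeneracy condition amounts to $\det\Omega(1)=\pm1$, which is immediate because, with respect to the ordering $(D_1^1,A_1^1,\dots,D_1^k,A_1^k)$, the matrix $E=\Omega|_{t=1}$ is the block sum of $2\times 2$ matrices $\sbmatrix{0 & 1\\ 1 & \pm 1}$, each of determinant $-1$. The determinantal statement $\det\Omega=\Delta_K(t)$ up to norms and units then follows, since $\det\Omega$ equals the order of the module presented by $\Omega$, the Alexander polynomial is the order of the Alexander module, and Witt equivalent linking forms have orders differing by a norm.

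The main obstacle is the precise verification of the second step---specifically, extracting the factor $z=(1-t)(1-t^{-1})$ multiplying the interior intersections and the precise form of the boundary matrix $E$ near the double points of $D_0$. This requires a careful analysis of how the boundaries of the Whitney and accessory discs sit in $\partial W$ and lift to $\partial \wt W$, together with careful bookkeeping of the Whitney and accessory framings used to define the diagonal entries $\lambda_{ii}$.
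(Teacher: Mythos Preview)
Your overall strategy---build a $4$-manifold $V$ with $\pi_1(V)\cong\Z$ and $\partial V=X_K$, identify its equivariant intersection form with $\Omega$, then invoke the Friedl--Powell presentation principle---is a reasonable alternative to what the paper does, but the two hard steps are not carried out, and the sketch you give for them is not quite right.

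First, the description of $\partial W$ is inaccurate. The boundary decomposes as $X_K\cup\partial_+$, where $\partial_+$ is the plumbed circle bundle over $D_0$ determined by \emph{all} $d=2k$ double points, not ``one tube per cancelling pair''. Filling $\partial_+$ to leave exactly $X_K$ is not a matter of attaching a handle per accessory curve; you would need to specify the attaching curves and framings and then verify both $\partial V=X_K$ and $\pi_1(V)\cong\Z$. The paper avoids this entirely: it works with $W$ and $\partial W$ as they stand, and the extra piece $\partial_+$ contributes a summand $(\Z[\Z]/\langle z\rangle)^d$ to $H_1(\partial W;\Z[\Z])$ that is later cancelled algebraically.

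Second, and more seriously, your mechanism for the factor $z=(1-t)(1-t^{-1})$ does not produce what you claim. If the discs were simply capped off to single immersed spheres in $V$, a transverse interior intersection of $D_1^i$ with $D_1^j$ would contribute $\lambda_{ij}$, not $z\lambda_{ij}$; the assertion that ``the boundary is null-homologous only after multiplication by $1-t$'' does not by itself weight intersection numbers by~$z$. In the paper the $z$ arises for a concrete geometric reason: the spheres are built by ambient surgery on Clifford tori, and each sphere contains \emph{four} parallel push-offs of the relevant Whitney or accessory disc, with signs and deck translations summing to $(1-t)(1-t^{-1})$. The resulting form on these spheres is $\lambda=z\Omega$, not $\Omega$; the extra $z^d$ in $\det\lambda$ is matched by the order $z^d\Delta_K(t)$ of $H_1(\partial W;\Z[\Z])$, and a further diagram chase removes the $z$ to show that a matrix $A$ with $zA=\Lambda$ presents $\Bl_{X_K}$. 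The spheres only span a full-rank submodule $F\subset\pi_2(W)$ (equality is not known in general), which is why the conclusion is only up to norms and Witt equivalence; this indeterminacy enters through a change-of-basis matrix $P(t)$ with $\det P(1)=\pm1$.

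So your outline could perhaps be made to work, but as written it has a genuine gap precisely where you flag the ``main obstacle'': neither the construction of $V$ nor the identification of its intersection form with $\Omega$ is established, and the heuristic you give for the latter is not the correct mechanism.
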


A variation on the above theorem arises by replacing the Whitney discs
with additional accessory discs in the following way.  Recall that the
Whitney disc $e_{2i-1} = D_1^i$ joins two self-intersection points of
$D_0$, say $p_i$ and $q_i$, and the corresponding accessory disc
$e_{2i}=A_1^i$ is based at one of these, say~$p_i$.  Let $e_{2i-1}$ be
an accessory disc for the other intersection point~$q_i$.  Replace an
arbitrary sub-collection of the Whitney discs by accessory discs as
above.  We obtain an intersection matrix via the same prescription
given above and the conclusions of
Theorem~\ref{theorem:compute-alex-poly-from-whitney-data} still hold.
In particular,
Theorem~\ref{theorem:compute-alex-poly-from-whitney-data} holds even
when all the order one discs are accessory discs.  More precisely, let
$e_i$ be an accessory disc for the $i$th double point of $D_0$,
$i=1,\ldots,d$. Let $\Lambda=(\lambda_{ij})$ where $\Lambda_{ij}$ is
the $\Z[\Z]$-intersection number of $e_i$ and $e_j$.  Let $E$ be the
$d \times d$ diagonal matrix whose $i$th diagonal entry is the sign of
the $i$th double point.  Define $\Psi = z\Lambda+E$.

\begin{theorem}
  \label{theorem:main-presentation-Blanchfield}
  The matrix $\Psi$ is a presentation matrix for a linking form Witt
  equivalent to the Blanchfield form of~$K$.  The determinant of
  $\Psi$ equals $\Delta_K(t)$, the Alexander polynomial of $K$, up to norms and units.
\end{theorem}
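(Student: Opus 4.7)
The plan is to deduce Theorem~\ref{theorem:main-presentation-Blanchfield} from Theorem~\ref{theorem:compute-alex-poly-from-whitney-data} by applying the variation remarked upon in the paragraph immediately preceding the statement: I would geometrically replace each Whitney disc $D_1^i$ in a presentation tower by an accessory disc at the \emph{other} double point $q_i$, and then verify that the resulting intersection matrix $\Psi$ differs from the original matrix $\Omega$ by a unimodular congruence. Theorem~\ref{theorem:compute-alex-poly-from-whitney-data} would then transfer to $\Psi$ the desired Witt equivalence and determinant statement.

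Concretely, I would start with a presentation tower $\mathcal{T} = D_0 \cup \bigcup_i D_1^i \cup \bigcup_i A_1^i$ as in Definition~\ref{definition:pres-tower}, in which each $D_1^i$ pairs the oppositely-signed double points $p_i, q_i$ of $D_0$ and $A_1^i$ is based at $p_i$. For each $i$, I would form a new immersed disc $B_1^i$ by boundary-connected sum of $D_1^i$ with $A_1^i$ along a short arc on their common boundary near $p_i$. A direct homotopy computation in $W = D^4 \setminus \nu D_0$ shows that $\partial B_1^i$ is homotopic to an accessory loop at $q_i$, since the Whitney arc of $\partial D_1^i$ transports the accessory loop from $p_i$ to $q_i$. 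Hence $\{B_1^1, A_1^1, \ldots, B_1^k, A_1^k\}$ is a full collection of $d$ accessory discs indexed by the self-intersections of $D_0$, as required by the set-up preceding Theorem~\ref{theorem:main-presentation-Blanchfield}.

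Next, I would compute $\Psi = z\Lambda' + E'$ for the new collection in terms of the data for $\Omega$. By bilinearity of the $\Z[t,t^{-1}]$-valued intersection form on $W$, the boundary-connected sum corresponds to the integer base change $e'_{2i-1} = e_{2i-1} + e_{2i}$ and $e'_{2i} = e_{2i}$, encoded by a unimodular block matrix $P$ with $2 \times 2$ upper-triangular blocks. Applying $P(\cdot)P^T$ to $\Omega$ then converts the off-diagonal $\varepsilon_{2i-1,2i} = 1$ entries, together with the framing correction from replacing the Whitney framing on $D_1^i$ by the accessory framing on $B_1^i$, into the new diagonal entry $E'_{2i-1,2i-1} = \mathrm{sign}(q_i) = -\mathrm{sign}(p_i)$; the remaining off-diagonal part of $z\Lambda'$ matches $P(z\Lambda)P^T$ termwise. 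Since $P$ is unimodular, $P\Omega P^T$ and $\Psi$ present isometric linking forms and have equal determinants up to units, so Theorem~\ref{theorem:compute-alex-poly-from-whitney-data} yields both conclusions of Theorem~\ref{theorem:main-presentation-Blanchfield}.

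The main obstacle I anticipate is the framing computation. The Whitney framing on $D_1^i$ and the accessory framing on $A_1^i$ combine non-trivially under boundary-connected sum, and using the descriptions from Section~\ref{section:framings-definitions}, one must verify that the induced framing on $B_1^i$ agrees with the accessory framing at $q_i$ up to a controlled integer twist. Only once this framing correction is shown to exactly absorb the off-diagonal $\varepsilon_{2i-1,2i} = 1$ contribution from $E$ does the clean formula $\Psi = z\Lambda' + E'$ emerge; everything else in the reduction is a bookkeeping exercise in equivariant intersection theory over $\Z[t,t^{-1}]$.
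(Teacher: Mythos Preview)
Your reduction strategy is genuinely different from the paper's. The paper does not deduce Theorem~\ref{theorem:main-presentation-Blanchfield} from Theorem~\ref{theorem:compute-alex-poly-from-whitney-data}; instead it proves the accessory-only structure Theorem~\ref{theorem:intersection-form-using-accessory-discs} directly---the sphere construction of Section~\ref{section:construction-of-spheres} using only accessory discs is in fact the simpler of the two---and then runs the identical algebraic argument of Sections~\ref{section:proof-theorem-compute-alex-poly-from-whitney-data} and~\ref{section:blanchfield} with $\Psi$ in place of~$\Omega$. This handles an arbitrary choice of accessory discs and avoids any comparison of Whitney and accessory framings.

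Your reduction, as written, has a concrete gap. Evaluate your claimed identity $\Psi = P\Omega P^T$ at $t=1$, where $z=0$. On the $i$th $2\times 2$ block this reads
\[
  P\,E\,P^T \;=\; \begin{pmatrix} 1 & 1 \\ 0 & 1\end{pmatrix}
  \begin{pmatrix} 0 & 1 \\ 1 & 1\end{pmatrix}
  \begin{pmatrix} 1 & 0 \\ 1 & 1\end{pmatrix}
  \;=\; \begin{pmatrix} 3 & 2 \\ 2 & 1\end{pmatrix},
  \qquad\text{whereas}\qquad
  E' \;=\; \begin{pmatrix} -1 & 0 \\ 0 & 1\end{pmatrix}.
\]
No framing correction can repair this discrepancy: the twisting coefficients enter $\Omega$ and $\Psi$ only through the summands $z\Lambda$ and $z\Lambda'$, which vanish at $t=1$. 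So the assertion that the framing change ``absorbs the off-diagonal $\varepsilon_{2i-1,2i}=1$ contribution from~$E$'' misplaces where framing data lives in the matrix. If instead you band with the orientation-reversed accessory disc, so that $e'_{2i-1}=e_{2i-1}-e_{2i}$, then $PEP^T=E'$ does hold on each block; but you still owe the verification that the geometric band sum produces a disc whose accessory-framing twisting equals the $(1,1)$ entry of $P\Lambda P^T$, and even then you would only have established the theorem for the particular accessory discs $B_1^i$ so constructed, not for an arbitrary choice as the statement requires.
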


\subsubsection*{An algorithm to compute abelian invariants}

Here is a special case of the type of tower used to determine the
matrix $\Psi$ in Theorem~\ref{theorem:main-presentation-Blanchfield}.
Construct an immersed disc bounded by a knot $K$ as follows.  Start
with a collection of crossings on a planar diagram of~$K$ such that
changing these crossings gives the trivial knot.  The associated
homotopy traces out a level preserving immersed annulus in
$S^1 \times I \hookrightarrow S^3\times I$ bounded by
$K=K\times \{0\}\subset S^3\times \{0\}$, and a trivial knot in
$S^3\times\{1\}$, whose intersections correspond to the crossing
changes.  Cap off $S^3 \times I$ by gluing a copy of $D^4$ to
$S^3 \times \{1\}$ and cap off the annulus along its unknotted
boundary component to obtain an immersed disc $D_0$ in $D^4$,
which~$K$ bounds.  Choose an accessory disc for each self-intersection
of~$D_0$.  Define $\Psi$ as in
Theorem~\ref{theorem:main-presentation-Blanchfield}.

The next result enables us, in this special case, to compute abelian
invariants from the intersection data of the immersed tower without
the indeterminacy from Witt equivalence and norms.

\begin{theorem}
  \label{theorem:main-blanchfield-alexander}
  The matrix $\Psi$ is a presentation matrix for the Blanchfield form
  of~$K$.  In particular, the determinant of $\Psi$ equals
  $\Delta_K(t)$ up multiplication by a unit $\pm t^k$.
\end{theorem}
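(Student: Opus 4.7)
The plan is to strengthen the conclusion of Theorem~\ref{theorem:main-presentation-Blanchfield} in this special setting, replacing Witt equivalence of linking forms by an actual isometry, and correspondingly replacing ``up to norms and units'' by ``up to a unit $\pm t^k$''. The source of indeterminacy in the general case is that a presentation tower for a general knot may be wasteful, producing spurious hyperbolic summands at the level of the linking form. The crossing-change construction instead yields a tower whose underlying handle structure is tight, and the goal is to make this rigidity visible.

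First, I would extract an explicit handle decomposition of $W := D^4 \setminus \nu D_0$ from the crossing-change data. The level-preserving homotopy $S^1 \times I \to S^3 \times I$ has exactly $d$ self-intersections, one at each changed crossing. Passing to complements and capping the unknotted boundary component of $D_0$ by the exterior of a standard disc in $D^4$ expresses $W$ as $(S^3 \setminus \nu K) \times I$ with a single $2$-handle attached for each crossing change, together with one further piece coming from the standard disc's exterior. With this decomposition in hand, the infinite cyclic cover $\widetilde{W}$ inherits an explicit $\Z[t,t^{-1}]$-chain complex.

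Second, I would identify each accessory disc $e_i$, up to controlled isotopy in $W$, with the cocore of the $2$-handle attached at the $i$-th crossing change; this is a local matter near each double point of $D_0$, where both the accessory disc and the cocore appear as geometric linking discs of the double point. Lifting the $e_i$ to chosen lifts $\widetilde{e}_i$ in $\widetilde{W}$ then yields a free $\Z[t,t^{-1}]$-basis for the relevant relative $H_2$, and the $\Z[t,t^{-1}]$-valued intersection form of these lifts is exactly $\Psi$. Via Poincar\'e--Lefschetz duality, the cokernel of $\Psi$ is identified with $H_1$ of the infinite cyclic cover of the knot exterior, with the induced linking form equal to the Blanchfield form of $K$, not merely Witt equivalent to it.

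The main obstacle I foresee is matching the framing conventions used to define the diagonal entries $z\lambda_{ii} + \varepsilon_{ii}$ of $\Psi$ with the framing of the cocore discs coming from the handle decomposition. This requires a local computation at each crossing change comparing the accessory framing of Section~\ref{section:framings-definitions} with the natural framing of the cocore of a $2$-handle attached along a crossing arc; the sign $\varepsilon_{ii}$ should record the sign of the crossing change, while the contribution from $z\lambda_{ii}$ should correctly measure self-linking in the infinite cyclic cover. Once this local compatibility is established, the claim about $\det \Psi$ follows at once, since any square presentation matrix of the Alexander module has determinant equal to $\Delta_K(t)$ up to the units $\pm t^k$ of $\Z[t,t^{-1}]$.
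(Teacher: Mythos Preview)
Your instinct to exploit the handle decomposition of $W$ coming from the crossing-change description is correct and matches Lemma~\ref{lemma:spheres-give-basis} in the paper. However, two steps in your proposal do not go through as stated. First, the $\Z[t,t^{-1}]$-valued intersection numbers of the accessory discs among themselves form the matrix called $\Lambda$ in the introduction, and by definition $\Psi = z\Lambda + E$; the factor $z$ and the diagonal sign matrix $E$ do not appear simply by intersecting the discs. In the paper, $\Psi$ arises indirectly: one passes to the \emph{spheres} $S_i$ built from Clifford tori by surgery along the accessory discs (Section~\ref{section:construction-of-spheres}), and Theorem~\ref{theorem:intersection-form-using-accessory-discs} computes their intersection form to be $z\Psi$, the factor $z$ coming from the four parallel copies of each disc and the $E$ term from the self-linking of the $(1,1)$-curve on the Clifford torus. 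The handle decomposition is then used, not to identify accessory discs with cocores, but to certify that these spheres form an honest basis of $H_2(W;\Z[\Z])$, so that the intersection form of $W$ equals $z\Psi$ on the nose rather than only after a change of basis $P(t)$ with $\det P(1)=\pm 1$.

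Second, even granting that the intersection form of $W$ is $z\Psi$, its cokernel is $H_1(\partial W;\Z[\Z])$, which by Proposition~\ref{prop:order-homology-partial-W-new} equals $H_1(X_K;\Z[\Z]) \oplus (\Z[\Z]/\langle z\rangle)^d$, not the Alexander module alone. Your proposal does not explain how to discard the extra summand and identify the induced linking form on the $H_1(X_K)$ part with the Blanchfield form of~$K$. The paper does this in Section~\ref{section:blanchfield} by restricting the adjoint map $H_2(W;R) \to H_2(W;R)^*$ to the submodule $N=z\cdot H_2(W;R)^*$; with respect to natural bases this restriction is represented by the matrix $A$ with $zA = z\Psi$, i.e.\ $A=\Psi$, and a short argument using that $1-t$ acts invertibly on $H_1(X_K;R)$ then shows that $A$ presents $\Bl_{X_K}$. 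Identifying accessory discs with cocores of $2$-handles does not by itself supply either of these ingredients.
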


In addition, we show that for a special choice of accessory discs, the
computation of the intersection data (and consequently of the abelian
invariants) is algorithmic from a knot diagram, providing a new
3-dimensional procedure to compute the Alexander polynomial and the
Blanchfield form of a knot.  We describe the algorithm in
Section~\ref{subsection:algorithm}, and we work through a detailed
example in Section~\ref{subsection:examples-accessory-discs-only}.

\subsection{Whitney towers and the Arf invariant}

Recall that we used the Whitney framing to compute the
$\Z[t,t^{-1}]$-valued intersection number of an order one Whitney disc
$D_1^i$ with itself.  In general, an extension of the Whitney framing
to $D_1^i$ may have zeros; the Whitney framing extends to a
non-vanishing section on $D_1^i$ if and only if it agrees with the
unique framing of the normal bundle of~$D_1^i$.  Following common
convention, we call such a Whitney disc \emph{framed}.  A Whitney tower is
\emph{framed} if all the Whitney discs in the tower are framed.

The generic number of zeroes, counted with sign, of an extension of
the Whitney framing to the normal bundle of the Whitney disc is called
the \emph{twisting coefficient}.  If a given Whitney disc is not
framed, by interior twisting we can alter the twisting coefficient by
any multiple of $2$, and whence if the twisting coefficient were even,
we could arrange that the Whitney disc be framed.  This motivates the following definition.

\begin{definition}\label{defn:twisted-disc}
  A Whitney disc is \emph{essentially twisted}
  if its twisting coefficient is odd.
\end{definition}

We recall the definition of the Arf invariant of a knot, in terms of a
Seifert matrix, in
Definition~\ref{defn:arf-invariant-seifert-surface}.  The following
theorem follows from work of Matsumoto, Kirby, Freedman and
Quinn~\cite{Matsumoto:1978-1, Freedman-Kirby:1978},
\cite[Section~10.8]{Freedman-Quinn:1990-1}.  See
also~\cite[Lemma~10]{Conant-Schneiderman-Teichner:2012-3}.

\begin{theorem}[Freedman, Kirby, Matsumoto, Quinn]
  \label{theorem:scheiderman-arf-inv}
  The Arf invariant $\Arf(K)$ vanishes if and only if $K$ is the
  boundary of framed Whitney tower of order two in~$D^4$.
\end{theorem}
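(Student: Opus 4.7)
The plan is to use the classical bridge $\Arf(K) \equiv (\Delta_K(-1)-1)/4 \pmod 2$ (with $\Delta_K$ normalised so that $\Delta_K(1)=1$) to convert the theorem into an arithmetic statement about $\Delta_K(-1) \pmod 8$, and then handle the two directions separately.

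For the implication that $K$ bounding a framed order two Whitney tower implies $\Arf(K) = 0$, I would first convert the given tower into a presentation tower, using the construction of Section~\ref{section:presentation tower}, verifying that the framed condition on the order one Whitney discs is preserved. Theorem~\ref{theorem:compute-alex-poly-from-whitney-data} then yields that $\det \Omega$ equals $\Delta_K(t)$ up to norms and units. Setting $t=-1$ so that $z(-1)=4$, expand
\[
  \det \Omega(-1) \equiv \det E(-1) + 4 \cdot \mathrm{tr}\bigl(\mathrm{adj}(E(-1))\, \Lambda(-1)\bigr) \pmod 8,
\]
and note that $E$ is block diagonal with $2\times 2$ Whitney-accessory blocks of determinant $-1$, so $\det E(-1) = (-1)^k$ where $k$ is the number of Whitney-accessory pairs. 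Since the order one Whitney discs are framed, the corresponding diagonal entries of $\Lambda$ satisfy a parity condition that forces the trace term above to vanish modulo $2$, giving $\det \Omega(-1) \equiv \pm 1 \pmod 8$. A routine check---using that $|f(1)|=1$ implies $f(1)$ is odd, hence $f(-1)$ is also odd, so $f(-1)^2 \equiv 1 \pmod 8$, and units $\pm t^k$ evaluate to $\pm 1$---transfers this to $\Delta_K(-1) \equiv \pm 1 \pmod 8$, which gives $\Arf(K) = 0$.

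For the converse, suppose $\Arf(K) = 0$. By Kauffman's theorem, $K$ is pass-equivalent to the unknot. The unknot bounds an embedded disc in $D^4$, which is tautologically a framed order two Whitney tower. Each pass move can be realised by a local ambient homotopy of this disc that introduces a cancelling pair of self-intersections together with a framed order one Whitney disc pairing them; any new intersections of this Whitney disc with the rest of the tower can in turn be paired by framed order two Whitney discs constructed in a neighbourhood of the local model. Inducting on the number of pass moves produces a framed order two Whitney tower bounded by $K$.

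The main obstacle I anticipate is the modular arithmetic step in the reverse direction, specifically verifying that the framed condition on the order one Whitney discs implies the required parity condition on the diagonal of $\Lambda$. This requires a careful analysis of how the twisting coefficient, which is zero for a framed Whitney disc, translates into the $\Z[t,t^{-1}]$-valued self-intersection number $\lambda_{ii}$ that enters the matrix, taking into account the paths used to lift the disc to the infinite cyclic cover, and it must be checked that preserving framings is compatible with the tubing-off moves used to arrange $\pi_1(D^4 \setminus \nu D_0) \cong \Z$. An alternative, closer in spirit to the historical proofs of Matsumoto, Freedman–Kirby, and Freedman–Quinn, would instead modify the tower directly by pushing the order one Whitney discs down to the $S^3$-level, producing a Seifert surface whose Seifert form admits a Lagrangian mod $2$; this bypasses the $\Omega$-matrix calculation but sidesteps the technology developed in this paper.
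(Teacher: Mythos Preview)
The paper does not prove this theorem; it is stated with attribution to Matsumoto, Freedman--Kirby, and Freedman--Quinn and cited to the literature. What the paper \emph{does} prove is the closely related Theorem~\ref{theorem:arf-whitney-alexander-poly}, which together with Levine's theorem (also only cited) and Lemma~\ref{lemma:even-twisted-discs-cancel} yields Theorem~\ref{theorem:scheiderman-arf-inv} as a corollary.

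Your forward direction is essentially the paper's proof of Theorem~\ref{theorem:arf-whitney-alexander-poly}, specialised to the case $n=0$. Your trace expansion of $\det\Omega(-1)$ modulo~$8$ is exactly the content of Lemma~\ref{lemma:levine-argument-modified} with all $a_i=0$, and your treatment of the norm factor $f(t)f(t^{-1})$ matches the opening lines of the proof in Section~\ref{section:proof-Arf-invariant-theorem}. The framing-preservation issue you flag as an obstacle is real but resolvable: starting from a framed order-two tower, the $D_0\cap D_1$ intersections are already algebraically zero (being paired by $D_2$ discs), so the boundary twists in the proof of Lemma~\ref{lemma:make-D-0-cap-D-1-empty} are unnecessary; the remaining moves (finger moves for $\pi_1$, Whitney moves along the $D_2$) only introduce \emph{new} framed order-one discs and do not twist the existing ones.

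Your backward direction via Kauffman's pass-equivalence is a genuinely different route from anything in the paper, and the sketch has a gap. The assertion that a single pass move ``introduces a cancelling pair of self-intersections together with a framed order one Whitney disc pairing them'' is not right as stated: a pass move corresponds to four crossing changes and hence four double points, and you give no argument for why the Whitney discs pairing them are framed, nor why the induced higher-order intersections admit framed order-two discs. A local model making this precise exists, but it is not a one-line appeal. The route more in keeping with the paper's machinery would be: $\Arf(K)=0$ gives $\Delta_K(-1)\equiv\pm 1\pmod 8$ by Levine; every knot bounds \emph{some} order-two presentation tower by Section~\ref{section:presentation tower}; Theorem~\ref{theorem:arf-whitney-alexander-poly} then forces that tower to have an even number of essentially twisted discs; and Lemma~\ref{lemma:even-twisted-discs-cancel} upgrades it to a framed tower. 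Either way one invokes an external result (Levine or Kauffman), which is consistent with the paper treating Theorem~\ref{theorem:scheiderman-arf-inv} as a cited fact rather than something proved internally.
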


In fact, Schneiderman~\cite{Schneiderman:2006-1} also showed that the
Arf invariant is the only obstruction for a knot to bound a framed
(asymmetric) Whitney tower of any given order: a knot which bounds a
framed order two Whitney tower in $D^4$ bounds a framed order~$n$
Whitney tower for all~$n$.

J.~Levine showed that the Arf invariant of a knot, defined in terms of
the Seifert form (recalled in
Definition~\ref{defn:arf-invariant-seifert-surface}), can be computed
in terms of the Alexander
polynomial~$\Delta_K(t)$~\cite[Sections~3.4~and~3.5]{Levine:1966-1}.
He used the fact that the Alexander polynomial can be computed as
$\det(tV-V^T)$, where~$V$ is a Seifert matrix for~$K$.

\begin{theorem}[Levine]
  The Arf invariant $\Arf(K)$ of a knot $K$ satisfies:
  \[\Arf(K) =
  \begin{cases}
	0 & \text{if }\Delta_K(-1) = \pm 1 \mod 8, \\
        1 & \text{if }\Delta_K(-1) = \pm 3 \mod 8.
  \end{cases}
  \]
\end{theorem}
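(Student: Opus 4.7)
The plan is to relate the Arf invariant to the mod-$8$ residue of $\Delta_K(-1)$ via a Seifert matrix. From $\Delta_K(t) \doteq \det(tV - V^T)$ for a $2g \times 2g$ Seifert matrix $V$ of $K$, evaluation at $t = -1$ gives $\Delta_K(-1) = \pm \det(V + V^T)$, so it suffices to compute $\det(V + V^T) \pmod 8$ and match it against $\Arf(K)$.

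I would first fix a symplectic basis $e_1, f_1, \ldots, e_g, f_g$ of $H_1(F; \Z)$ for the unimodular intersection form $V - V^T$ on the Seifert surface~$F$. In this basis, every entry of $V + V^T$ is even except at positions $(2k-1, 2k)$ and $(2k, 2k-1)$, where it equals $2 V_{2k, 2k-1} + 1$. The Arf invariant of $K$ is the Arf invariant of the quadratic refinement $q(x) := x^T V x \bmod 2$ of the mod-$2$ intersection form, so in this basis $\Arf(K) \equiv \sum_{k=1}^g V_{2k-1, 2k-1} V_{2k, 2k} \pmod 2$.

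Write $V + V^T = R + 2C$, where~$R$ is the block symplectic symmetric matrix with $g$ diagonal blocks $\left(\begin{smallmatrix} 0 & 1 \\ 1 & 0 \end{smallmatrix}\right)$ and $C$ is integral and symmetric. Since $R^{-1} = R$ and $\det R = (-1)^g$, one has $\det(V + V^T) = (-1)^g \det(I + 2RC)$ and
\[
  \det(I + 2RC) \equiv 1 + 2 \operatorname{tr}(RC) + 4 e_2(RC) \pmod 8,
\]
where $e_2$ denotes the sum of $2 \times 2$ principal minors. A short computation yields $\operatorname{tr}(RC) = 2 \sum_k V_{2k, 2k-1}$. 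The target congruence is
\[
  \det(V + V^T) \equiv (-1)^g \bigl(1 + 4\,\Arf(K)\bigr) \pmod 8,
\]
from which $\Delta_K(-1) \equiv \pm 1 \pmod 8$ when $\Arf(K) = 0$ and $\Delta_K(-1) \equiv \pm 3 \pmod 8$ when $\Arf(K) = 1$, using $5 \equiv -3 \pmod 8$.

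The main obstacle is the combinatorial evaluation of $e_2(RC) \pmod 2$. I would split the sum over pairs $(i, j)$ with $i < j$ into two cases: either $\{i, j\}$ is a single symplectic pair $\{2k-1, 2k\}$, or it spans two distinct pairs. In-pair terms contribute $V_{2k-1, 2k-1} V_{2k, 2k} + V_{2k, 2k-1}$ modulo~$2$, after using $C_{2k-1, 2k}^2 \equiv V_{2k, 2k-1}$. For cross-pair terms, I expect the four-element grouping indexed by $(i, j) \in \{2k-1, 2k\} \times \{2l-1, 2l\}$ (for $k < l$) to sum to an even integer via the symmetry $V_{ij} = V_{ji}$ at off-pair positions. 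Combining with the trace contribution, the spurious $V_{2k, 2k-1}$ terms double into a multiple of~$8$ and drop out, producing the claimed identity.
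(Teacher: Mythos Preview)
The paper does not give its own proof of this statement; it is cited as Levine's result \cite[Sections~3.4~and~3.5]{Levine:1966-1} and used as input. Your outline is essentially Levine's original Seifert-matrix argument, and it is correct. The cross-pair cancellation you flag as the main obstacle goes through exactly as you predict: since $C$ is symmetric, the four off-diagonal products $(RC)_{ij}(RC)_{ji}$ for $(i,j)\in\{2k-1,2k\}\times\{2l-1,2l\}$ pair off into two equal pairs, namely $C_{2k,2l-1}C_{2k-1,2l}$ twice and $C_{2k,2l}C_{2k-1,2l-1}$ twice, while the four diagonal products $(RC)_{ii}(RC)_{jj}$ are all equal to $V_{2k,2k-1}V_{2l,2l-1}$. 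So the cross-pair contribution to $e_2(RC)$ is even, the $4\sum_k V_{2k,2k-1}$ from the in-pair minors combines with the equal trace term $2\operatorname{tr}(RC)=4\sum_k V_{2k,2k-1}$ to give a multiple of~$8$, and $\det(V+V^T)\equiv(-1)^g(1+4\Arf(K))\pmod 8$ drops out.

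It is worth noting that while the paper does not reprove Levine's theorem, it does adapt the same mod-$8$ determinant idea in Lemma~\ref{lemma:levine-argument-modified}, applied instead to the Whitney-tower intersection matrix $\Omega(-1)$. There the argument is organised slightly differently: rather than factoring out a symplectic block $R$ and expanding $\det(I+2RC)$, the paper works directly with the permutation expansion of the determinant, calling the odd entries ``special'' and observing that only summands with at most one non-special factor survive modulo~$8$. Your factorisation $V+V^T=R+2C$ is a cleaner way to organise the same bookkeeping; the permutation-expansion method has the mild advantage that it handles matrices not of the form $R+2C$ (in the Whitney-tower case the non-special entries are multiples of~$4$ rather than~$2$), but for the Seifert situation your approach is the more natural one.
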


The absolute value of the Alexander polynomial evaluated at $-1$ is
also the order of the homology of the 2-fold branched cover of $K$,
which is a $\Z_{(2)}$-homology circle.  In particular,
$\Delta_{K}(-1)$ is always an odd number.  The Arf invariant measures,
up to a unit, whether $\Delta_K(-1)$ is a square modulo 8.

By combining the two previous theorems, the following is known.

\begin{theorem}[Freedman, Kirby, Levine, Matsumoto, Quinn]
  A knot $K$ bounds a framed Whitey tower of order two if and only if
  $\Delta_K(-1) \equiv \pm 1 \mod{8}$.
\end{theorem}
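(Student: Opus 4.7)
The plan is to chain together the two theorems immediately preceding the statement; the result is essentially a syllogism with no additional content. First I would apply the theorem of Freedman, Kirby, Matsumoto, and Quinn (Theorem~\ref{theorem:scheiderman-arf-inv}), which says that $K$ bounds a framed order two Whitney tower in $D^4$ if and only if $\Arf(K)=0$. This step reduces the question to an equivalence between $\Arf(K)=0$ and a congruence on $\Delta_K(-1)$.

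Next I would invoke Levine's theorem to translate the Arf condition into the stated congruence. Levine's theorem gives the two-sided implication
\[
\Arf(K)=0 \iff \Delta_K(-1)\equiv \pm 1 \pmod 8,
\qquad
\Arf(K)=1 \iff \Delta_K(-1)\equiv \pm 3 \pmod 8.
\]
Since $|\Delta_K(-1)|$ equals the order of $H_1$ of the 2-fold branched cover of $K$, which is a $\Z_{(2)}$-homology circle, $\Delta_K(-1)$ is always odd. Hence the four residues $\pm 1, \pm 3 \pmod 8$ exhaust all possibilities, and the two cases of Levine's theorem form a genuine dichotomy.

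Combining the two equivalences then yields the statement: $K$ bounds a framed order two Whitney tower $\iff$ $\Arf(K)=0$ $\iff$ $\Delta_K(-1)\equiv \pm 1 \pmod 8$. I do not anticipate any obstacle; both input theorems are quoted verbatim from the preceding paragraphs, so the proof amounts to a single sentence citing them, with the oddness of $\Delta_K(-1)$ mentioned only to confirm that the $\pm 1$ and $\pm 3$ cases cover all knots.
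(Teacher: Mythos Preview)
Your proposal is correct and matches the paper's approach exactly: the paper introduces this theorem with the sentence ``By combining the two previous theorems, the following is known,'' which is precisely the syllogism you describe. No further argument is given or needed.
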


However the only previously known proof of this result (to the
authors) proceeds by passing via the Seifert form definition of the
Arf invariant.  We give a new, direct, 4-dimensional proof that the
Whitney tower and Alexander polynomial interpretations of the Arf
invariant are equivalent.  More precisely, we show the following.

\begin{theorem}\label{theorem:arf-whitney-alexander-poly}
  Suppose $K$ bounds an order two Whitney tower where~$n$ of the order one Whitney discs are
  essentially twisted.  Then
  \[
  \Delta_K(-1) \equiv
  \begin{cases}
     \pm 1 \mod 8  & \text{if }n \equiv 0 \mod 2,\\
     \pm 3 \mod 8  & \text{if }n \equiv 1 \mod 2.
  \end{cases}
  \]
\end{theorem}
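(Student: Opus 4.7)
The plan is to invoke Theorem~\ref{theorem:compute-alex-poly-from-whitney-data}, evaluate the presentation matrix $\Omega = z\Lambda + E$ at $t=-1$, and compute its determinant modulo~$8$. First I would reduce the given order two Whitney tower to an order two presentation tower (Definition~\ref{definition:pres-tower}) while preserving $n \bmod 2$: conditions~(i) and~(iii) are routine, and condition~(ii) is achieved by the Whitney moves along the order two discs, which (one must verify) change the twisting coefficients of the order one Whitney discs only by even amounts. Theorem~\ref{theorem:compute-alex-poly-from-whitney-data} then supplies a $d \times d$ matrix $\Omega = z\Lambda + E$ with $\det(\Omega) = \pm t^j f(t)f(t^{-1})\Delta_K(t)$ for some $f \in \Z[t,t^{-1}]$ with $|f(1)|=1$.

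Evaluating at $t=-1$ and observing $z(-1) = 4$, we get $\Omega(-1) = E + 4\Lambda(-1)$. After reordering so that each Whitney--accessory pair $(e_{2i-1}, e_{2i})$ is adjacent, $E$ is block diagonal with $2\times 2$ blocks $\left(\begin{smallmatrix} 0 & 1 \\ 1 & \varepsilon_i \end{smallmatrix}\right)$ of determinant $-1$, where $\varepsilon_i = \pm 1$ is the sign of the double point carrying $A_1^i$. Hence $\det(E) = (-1)^k$ with $d = 2k$, and $E^{-1}$ has blocks $\left(\begin{smallmatrix} -\varepsilon_i & 1 \\ 1 & 0 \end{smallmatrix}\right)$. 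Using the identity $\det(I + 4M) \equiv 1 + 4\operatorname{tr}(M) \pmod{8}$, valid since the higher terms carry a factor of~$16$, we obtain
\[
\det\Omega(-1) \equiv (-1)^k\bigl(1 + 4\operatorname{tr}(E^{-1}\Lambda(-1))\bigr) \pmod{8}.
\]
A block-by-block computation, combined with the Hermitian symmetry $\lambda_{pq}(-1) = \lambda_{qp}(-1)$, reduces the trace modulo~$2$ to $\sum_{i=1}^k \lambda_{2i-1,2i-1}(-1)$.

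The geometric heart of the argument is the congruence $\lambda_{2i-1,2i-1}(-1) \equiv w_i \pmod 2$, where $w_i$ is the Whitney twisting coefficient of $D_1^i$. Each transverse self-intersection $p$ of $D_1^i$ contributes a Hermitian term $\sigma_p(t^{g_p} + t^{-g_p})$ to $\lambda_{2i-1,2i-1}(t)$, arising from the two intersections of $D_1^i$ with its Whitney-framed push-off near $p$; this evaluates to the even integer $2\sigma_p(-1)^{g_p}$ at $t=-1$. The remaining zeros of the extended Whitney framing contribute precisely $w_i$ by definition of the twisting coefficient. Since $D_1^i$ is essentially twisted iff $w_i$ is odd, $\operatorname{tr}(E^{-1}\Lambda(-1)) \equiv n \pmod 2$, and therefore $\det\Omega(-1) \equiv (-1)^k(1 + 4n) \pmod 8$, which is $\equiv \pm 1 \pmod 8$ when $n$ is even and $\equiv \pm 3 \pmod 8$ when $n$ is odd.

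To transfer this congruence to $\Delta_K(-1)$, substitute $t=-1$ in $\det(\Omega) = \pm t^j f(t)f(t^{-1})\Delta_K(t)$ to get $\det\Omega(-1) = \pm (-1)^j f(-1)^2 \Delta_K(-1)$. Since $\det\Omega(-1)$ is odd by the computation above, so is $f(-1)$, and every odd square is $\equiv 1 \pmod 8$. Therefore $\Delta_K(-1) \equiv \pm \det\Omega(-1) \pmod 8$, yielding the desired dichotomy. The main obstacle is the reduction in the first paragraph: verifying that the Whitney moves used to arrange condition~(ii) of a presentation tower alter the twisting coefficients of the order one Whitney discs only by even integers, so that $n \bmod 2$ genuinely descends to the presentation tower where Theorem~\ref{theorem:compute-alex-poly-from-whitney-data} applies.
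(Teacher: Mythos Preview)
Your proposal is correct and follows the paper's strategy: pass to a presentation tower, invoke Theorem~\ref{theorem:compute-alex-poly-from-whitney-data}, set $t=-1$ so that $z=4$, and compute $\det\Omega(-1)\pmod 8$ to land on $(-1)^k + 4\sum_i a_i$, then transfer via the norm factor $f(-1)^2\equiv 1\pmod 8$. The only real difference is how the $\bmod\ 8$ determinant is extracted: the paper proves a standalone Lemma~\ref{lemma:levine-argument-modified} by a Levine-style expansion into ``special'' (odd) and ``non-special'' ($\equiv 0\pmod 4$) entries, whereas you factor $\Omega(-1)=E(I+4E^{-1}\Lambda(-1))$ and use $\det(I+4M)\equiv 1+4\operatorname{tr}M\pmod 8$; your route is a little slicker and makes the dependence on the diagonal Whitney entries $\lambda_{2i-1,2i-1}(-1)=2p_{rr}(-1)+a_i$ transparent. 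On the reduction step you flag: the paper does not spell it out either. The point that closes it is that if you start from an honest order~two tower, the $D_0\cap D_1^i$ intersections are already paired by order~two discs, so no boundary twists are needed in Lemma~\ref{lemma:make-D-0-cap-D-1-empty}; all remaining modifications of a given $D_1^i$ are regular homotopies rel boundary, and any two immersed discs in $D^4$ with the same boundary have disc framings differing by the Euler number of the immersed sphere they bound, which is even (since $[S]\cdot[S]=0$ forces $e(\nu S)=-2\mu(S)$). Hence each $a_i\bmod 2$, and therefore $n\bmod 2$, is preserved.
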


If an order 2 Whitney tower has an even number of essentially twisted
Whitney discs, then it can be modified by geometric moves to a framed
order 2 tower.  This follows easily from
\cite[Theorem~2.15]{Conant-Schneiderman-Teichner:2012-2}; for the
convenience of the reader we sketch the procedure in
Lemma~\ref{lemma:even-twisted-discs-cancel}.  However note that we do
not need this step: the Alexander polynomial conclusion can be drawn
if we have an order two Whitney tower with an even number of
essentially twisted discs.

\subsubsection*{Motivation}

In future work, we hope to describe all nilpotent invariants of links (roughly, invariants carried by duality and the homology of a nilpotent cover) from the intersection theory of an asymmetric Whitney tower for the link.

Of particular interest are the postulated ``higher order
Arf invariants'' of Conant, Schneiderman and 
Teichner~\cite{Conant-Schneiderman-Teichner:2012-2, Conant-Schneiderman-Teichner:2012-3}.  They ask whether a link bounds an asymmetric framed Whitney tower in the 4-ball, and define an obstruction theory involving an algebra of
labelled uni-trivalent trees.  They show that
Milnor's link invariants and the Arf invariant are obstructions to building towers.  Additional non-trivial
trees in their algebra do not correspond to any known invariants,
and may obstruct higher order framed Whitney towers for certain links.  The main examples of these links
are iterated Bing doubles of knots with non-vanishing Arf invariant.
Conant, Schneiderman and Teichner call these invariants the \emph{higher
  order Arf invariants}, and these invariants live either in $\Z_2$ or $0$.  If the higher order Arf invariants were trivial, one would need to add new relations to the tree algebra. We recommend~\cite{Conant-Schneiderman-Teichner:2012-2, Conant-Schneiderman-Teichner:2012-3} for further reading.  It is with this problem in mind that we put such emphasis on giving a new proof of the long-known relationship between Whitney towers and the Arf invariant.

\subsection*{Organisation of the paper}

Section~\ref{section:presentation tower} constructs a presentation
tower for the knot, that is an order two immersed Whitney tower with
the special attributes described in
Definition~\ref{definition:pres-tower}.
Section~\ref{section:intersection-form} gives the statement of our
main technical theorems,
Theorem~\ref{theorem:intersection-form-using-whitney-discs} and
Theorem~\ref{theorem:intersection-form-using-accessory-discs}, on the
structure of the intersection form of the exterior $W$ of an immersed
disc $D_0 \looparrowright D^4$, and the relation of this intersection
form to the combinatorics of Whitney and accessory disc intersections.
Section~\ref{section:Proof} is devoted to the proof of the technical
theorems.  Section~\ref{section:pi-2-is-free} shows that $\pi_2(W)$ is
a free module. Section~\ref{section:construction-of-spheres}
constructs the spheres we use to compute the intersection
form. Section~\ref{section:framings-definitions} gives the precise
definitions of Whitney and accessory framings.
Sections~\ref{section:intersection-of-spheres-0}
through~\ref{section:int-spheres-4} compute the intersections of the
spheres, proving
Theorems~\ref{theorem:intersection-form-using-whitney-discs} and~\ref{theorem:intersection-form-using-accessory-discs}.
Section~\ref{section:homology-boundary} computes the homology of
$\partial W$.
Section~\ref{section:proof-theorem-compute-alex-poly-from-whitney-data}
collates the results of the previous two sections, proving
Theorems~\ref{theorem:compute-alex-poly-from-whitney-data},~\ref{theorem:main-presentation-Blanchfield}  and ~\ref{theorem:main-blanchfield-alexander}, apart from the
Blanchfield form assertions.
Section~\ref{section:examples} gives some
example computations.
Section~\ref{section:defns-of-Arf} recalls, for
completeness, the usual definition of the Arf invariant in terms of
the Seifert form.
Section~\ref{section:proof-Arf-invariant-theorem}
proves Theorem~\ref{theorem:arf-whitney-alexander-poly} relating the
Alexander polynomial at $-1$ to the modulo two count of the number of
twisted Whitney discs.
Section~\ref{section:blanchfield} considers
the Blanchfield form and completes the proof of
Theorems~\ref{theorem:main-presentation-Blanchfield} and
\ref{theorem:compute-alex-poly-from-whitney-data}.

\subsection*{Acknowledgements}

We would like to thank Peter Teichner for several useful suggestions, and in
particular for sharing the idea of the construction of spheres from Whitney
discs.  We also thank Nick Castro, Stefan Friedl, Daniel Kasprowski, Allison
Moore and Rob Schneiderman for their interest and helpful comments. The second
author gratefully acknowledges the support provided by the SFB 1085 Higher
Invariants at the University of Regensburg while on sabbatical in the fall of
2014, and funded by the Deutsche Forschungsgemeinschaft (DFG).

\section{Construction of an order two presentation tower for a knot}\label{section:presentation tower}

We begin with a properly immersed disc $D'_0$ in $D^4$ with boundary a
knot $K \subset S^3$ which has an algebraically vanishing count of
self-intersection points.  This can always be arranged by adding local
cusp singularities to $D_0'$~\cite[p.~72]{Kirby:1989-1}.  Such a disc
induces the zero framing on its boundary~$K$.  In the next two
subsections, we will show how to find a new immersed disc $D_0$,
regularly homotopic to $D'_0$, the complement of which has infinite
cyclic fundamental group.  We will then show how to find order one
Whitney discs $D_1^1,\ldots,D_1^k$, that are potentially twisted, in
the exterior of $D_0$.  Here $D_0$ has $d=2k$ double points.  In our
results relating knot invariants to Whitney towers, we will use
intersection data from the order one Whitney discs, together with data
from additional discs called \emph{accessory} discs.  This will
construct an order two presentation tower for $K$, as promised.

For a double point $p$ of $D_0$, a \emph{double point loop} is a loop
on $D_0$ that leaves $p$ along one sheet and returns along the other,
avoiding all other intersection points.  An \emph{accessory disc} (see
\cite[Section~3.1]{Freedman-Quinn:1990-1}) is a disc in $D^4 \sm \nu D_0$ whose
boundary is a push-off of a double point loop to the boundary
$\partial_+ := \partial (\cl(\nu D_0)) \sm \nu K$ of a neighbourhood
of $D_0$.  By a judicious choice, the push-off can be arranged to be
trivial in $\pi_1(D^4 \sm \nu D_0) \cong \Z$.  It therefore bounds an accessory disc
in~$D^4 \sm \nu D_0$. (See Lemma~\ref{lemma:accessory-discs-exist-part-1} below.)

For each Whitney disc $D_1^i$, pick one of the two intersections
paired by $D_1^i$, and produce an accessory disc $A_1^i$ for this
intersection as above.

\subsection{Fixing the fundamental group}\label{section:fixing-pi-1}

\begin{lemma}\label{lemma:-make-pi-1-Z}
  A properly immersed disc $D'_0$ in $D^4$ with boundary a knot
  $K \subset S^3$ is regularly homotopic to a disc $D_0$ for which
  $\pi_1(D^4 \sm \nu D_0) \cong \Z$.  Moreover, new double points
  support order 1 framed Whitney discs.
\end{lemma}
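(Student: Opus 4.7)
The plan is to modify $D_0'$ by a sequence of finger moves---regular homotopies introducing cancelling pairs of self-intersections---until the commutator subgroup of $\pi_1$ of the complement is killed. By construction each new pair of double points comes equipped with a canonically framed local Whitney disc, furnishing the ``moreover'' clause.

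Applying van Kampen to the decomposition $D^4 = \nu D_0' \cup W'$ (with $W' := D^4 \sm \nu D_0'$), together with $\pi_1(D^4) = \pi_1(\nu D_0') = 1$ and the observation that $\pi_1(\partial \nu D_0')$ is generated by meridians of $D_0'$ (all conjugate to a single meridian $\mu$), shows that $\pi_1(W')$ is normally generated by $\mu$. Since $H_1(W') = \Z\langle \mu \rangle$ by Alexander duality, we have $\pi_1(W') \cong \Z$ if and only if every pair of conjugates of $\mu$ commutes, i.e.\ the commutator subgroup is trivial. Since $\pi_1(W')$ is finitely presented, this reduces to killing finitely many commutators of the form $[c_1 \mu c_1^{-1}, c_2 \mu c_2^{-1}]$.

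A finger move on $D_0'$ along an arc $\lambda \subset W'$ with endpoints on $D_0'$ introduces two new self-intersections $q^\pm$, and in the new complement $W''$ the Clifford torus around $q^\pm$ has fundamental group $\Z^2$ generated by the two meridians $\mu_A, \mu_B$ of the sheets meeting at $q^\pm$. Its inclusion imposes the relation $[\mu_A, \mu_B] = 1$ in $\pi_1(W'')$, and no new generators are added because the Clifford torus generators are themselves conjugates of $\mu$ in the updated complement. By choosing the endpoints of $\lambda$ and the conjugating paths from the basepoint appropriately, any prescribed pair $(c_1 \mu c_1^{-1}, c_2 \mu c_2^{-1})$ can be realised. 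Iterating over a finite generating set for the commutator subgroup produces $D_0$ regularly homotopic to $D_0'$ with $\pi_1(D^4 \sm \nu D_0) \cong \Z$. For the ``moreover'' clause, each finger move is supported in a standard $4$-ball $B \subset D^4$ containing a canonical embedded local Whitney disc $D_1^k \subset B$ pairing $q^\pm$; inside this model the Whitney framing on $\partial D_1^k$ extends to the standard trivialisation of the normal bundle of $D_1^k$ in $B$, so $D_1^k$ is framed.

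The main obstacle is realising each prescribed commutator by the geometry of a single finger arc: given $c_1, c_2 \in \pi_1(W')$, one must produce an arc $\lambda$ with endpoints on $D_0'$ whose associated conjugating paths (from the basepoint to the endpoints of $\lambda$, then following meridians) match $c_1$ and $c_2$. This reduces to a transversality argument, representing $c_1$ and $c_2$ by loops meeting $D_0'$ in controlled ways and then concatenating with a small arc in $W'$ between the two meeting points. Finite presentability of $\pi_1(W')$ then bounds the number of finger moves required and guarantees termination.
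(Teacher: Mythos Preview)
Your proposal is correct and follows essentially the same approach as the paper: perform finger moves to kill commutators of meridians, using that meridians normally generate $\pi_1(W')$ and that $H_1(W')\cong\Z$, so abelianising yields~$\Z$. Your added justifications (van Kampen for meridional generation, Alexander duality for $H_1$, the Clifford torus to witness $[\mu_A,\mu_B]=1$, and the local model for the framed Whitney disc) are all standard and sound elaborations of what the paper leaves implicit by citing Casson.
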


\begin{proof}
  The idea is to use finger moves, as introduced by Casson in
  \cite{Casson-1986-towers}.  A finger move kills a commutator of the
  form $[g,g^w]$, where $g$ is a meridian of $D'_0$, $w$ is the curve
  the finger pushes along, and $g^w$ means $wgw^{-1}$.

  Apply finger moves to make any pair of meridional loops commute.
  Since meridional loops (finitely) generate the fundamental group,
  the fundamental group $\pi_1(D^4\sm\nu D_0)$ corresponding to the
  new immersed disc $D_0$ is the abelianisation of
  $\pi_1(D^4\sm\nu D_0')$ which is~$\Z$.
\end{proof}

Define $W:= D^4 \sm \nu D_0$ to be the exterior of the immersed disc
$D_0$ produced by Lemma~\ref{lemma:-make-pi-1-Z}.  A consequence of
Lemma~\ref{lemma:-make-pi-1-Z} is the existence of an accessory disc.

\begin{lemma}\label{lemma:accessory-discs-exist-part-1}
  Each double point of $D_0$ has an accessory disc in~$W$.
\end{lemma}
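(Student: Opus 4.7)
The plan is to show that for each double point $p$ of $D_0$ there is a push-off of a double point loop based at $p$ that is null-homotopic in $W$, after which standard 4-dimensional transversality produces an immersed disc with the required boundary.

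Fix a double point $p$ with local sheets $A$ and $B$ meeting transversely, and a double point loop $\gamma$ at $p$. Choosing a normal vector field to $D_0$ along $\gamma \sm \{p\}$ (which exists because $\gamma \sm \{p\}$ is an arc, so its normal bundle in $D_0$ trivialises), push $\gamma$ off $D_0$ to obtain an arc $\gamma' \subset \partial_+$ with two free endpoints $p_A$ near $A$ and $p_B$ near $B$. Pick any arc $\delta \subset \partial_+$ inside a small $4$-ball around $p$ connecting $p_A$ and $p_B$, and set $\tilde\gamma := \gamma' \cup \delta$; this is a push-off of $\gamma$ to $\partial_+$ as required by the accessory-disc definition.

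The key step is to adjust $\delta$ so that $[\tilde\gamma] = 0 \in \pi_1(W) \cong \Z$. The boundary of a small 4-ball around $p$ is an $S^3$ meeting $D_0$ in a Hopf link, so the portion of $\partial_+$ near $p$ deformation retracts onto the Hopf link exterior, whose fundamental group is generated by meridians $\mu_A$ and $\mu_B$ of the two sheets; both $\mu_A$ and $\mu_B$ map to the preferred generator $\mu$ of $\pi_1(W) \cong \Z$ given by a meridian of $D_0$. Varying $\delta$ near $p$ changes $[\tilde\gamma] \in \pi_1(W)$ by an arbitrary integer multiple of $\mu$, so from any initial choice we can adjust by a suitable number of local meridians to achieve $[\tilde\gamma] = 0$. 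Once $\tilde\gamma$ is null-homotopic in $W$, a null-homotopy extends to a map $f \colon D^2 \to W$ with $f|_{\partial D^2} = \tilde\gamma$; a generic perturbation rel boundary makes $f$ a smooth immersion with only transverse self-intersections (since maps of a $2$-complex into a $4$-manifold are generically immersions), which is the desired accessory disc at $p$.

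The main obstacle I foresee is the local $\pi_1$ bookkeeping in the third paragraph: namely, confirming that the ambiguity in the choice of closing arc $\delta$ surjects onto the subgroup generated by $\mu$, hence onto all of $\pi_1(W) \cong \Z$. Once this is in place, both the push-off construction and the standard transversality step producing the immersed disc are routine, and the lemma follows.
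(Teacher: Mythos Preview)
Your argument is correct and is essentially the same as the paper's proof, which simply says: choose a push-off of a double point loop, wind it around a meridian of $D_0$ if necessary to make it null-homotopic in $W$ (using $\pi_1(W)\cong\Z$), and take a null-homotopy in general position. Your third paragraph unpacks the ``winding around a meridian'' step more explicitly via the local Hopf link structure near the double point, but the underlying idea is identical.
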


\begin{proof}
 Choose a push-off of a double point loop.  By
  winding the push-off around a meridian to $D_0$ if necessary,
  arrange that the push-off is null-homotopic in $W$.  Here we use
  that $\pi_1(W) \cong \Z$.  A null-homotopy in general position gives
  us an accessory disc as required.  Here we do not impose any framing
  condition on the accessory disc.
\end{proof}

  The same argument applies to the Whitney disc case, showing that any pair of double points with opposite sign admit a (potentially twisted) order one Whitney disc in $W$.

\subsection{Arranging $D_0 \cap D_1 = \emptyset$}
\label{section:arrange-D_1-misses-D_0}

A \emph{Whitney tower of order one} is a properly immersed disc $D_0$
together with Whitney discs $D_1 = D_1^1 \cup \cdots \cup D_1^k$ which
pair up all the double points of~$D_0$.  The Whitney discs are said to
have \emph{order one} (since they pair self-intersections of the order
zero disc.)  We impose nothing about the framing of the Whitney
discs.  We remark that we can indeed arrange each Whitney disc to be
framed, by applying boundary twists, and in this case the tower is
called a \emph{framed Whitney tower of order one}.

In an order one Whitney tower, since a Whitney disc pairs double
points of opposite signs, $D_0$ automatically has vanishing algebraic
self intersection.  Conversely, when $D_0$ is an immersed disc in
$D^4$ with algebraic self-intersection zero then, since $D^4$ is
simply connected, there exist Whitney discs which pair up all the
double points.

Furthermore, for any given order one Whitney tower, we can modify the tower so that the interiors of the order one Whitney discs are disjoint from the order zero disc~$D_0$, as required in the definition of an \emph{order 2
  presentation tower} (Definition~\ref{definition:pres-tower}).
  For the convenience of the reader, we explain the procedure in the next lemma, which is well known to the
experts.

This is a special case of a general result of
Conant-Schneiderman-Teichner, c.f.\
\cite[Proof~of~Lemma~10]{Conant-Schneiderman-Teichner:2012-3}.
However, note that Conant-Schneiderman-Teichner do not need to
actually cancel intersection points geometrically; in their situation
it is enough to pair them up with Whitney discs which admit higher
order intersections only.  For this reason we spell out the details in
our special case.  If one wishes to simply show the existence of an
order two presentation tower, rather than promoting a given order one
Whitney tower, one can choose Whitney discs in the exterior of $D_0$,
as in the remark just after the proof of
Lemma~\ref{lemma:accessory-discs-exist-part-1}.

Everything in 4-manifold topology seems to comes at a price, and in
this case we can arrange the desired disjointness $D_0 \cap D_1 = \emptyset$ at the cost of
allowing twisted Whitney discs.

\begin{lemma}\label{lemma:make-D-0-cap-D-1-empty}
  Let $D_0 \cup D_1$ be an order one Whitney tower, where
  $D_1=D_1^1 \cup \dots \cup D_1^n$.  After performing boundary twists
  on $D_1$, there is a regular homotopy of $D_0$ to an immersed disc
  $D_0'$ which supports an order 2 tower $D_0' \cup D_1'$  where $D_0' \cap \inte D_1' = \emptyset$.
\end{lemma}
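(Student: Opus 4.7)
The plan is to eliminate the intersections $p \in D_0 \cap \inte D_1^j$ one at a time via the standard push-down operation, as in~\cite[Section~1.5]{Freedman-Quinn:1990-1}. For a fixed such $p$, I would choose an embedded arc $\gamma \subset D_1^j$ from $p$ to a point $b \in \partial D_1^j$, chosen generically so that $\gamma$ meets no other singular points of the tower. A tubular neighborhood $N(\gamma) \subset D_1^j$ is a ribbon with one end on $\partial D_1^j$, and the push-down is a regular homotopy of $D_0$ supported in a small $4$-ball neighborhood of $\gamma$ that ``absorbs'' $p$ along $\gamma$, leaving $D_1^j$ disjoint from $D_0$ near~$p$.

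The central local analysis is to show that the push-down, performed in a small $4$-ball neighborhood of $\gamma$, has the following effect: (i) the intersection $p$ of $D_1^j$ and $D_0$ is eliminated; (ii) a pair of oppositely signed self-intersections of $D_0$ is created near $b$ and can be paired by a small local Whitney disc $\widetilde D$, obtained as a parallel push-off of $N(\gamma)$ into $D^4 \setminus \nu D_0$; and (iii) the Whitney framing of $D_1^j$ may change by a full twist along $\gamma$, which I would absorb via a boundary twist, possibly making $D_1^j$ essentially twisted. Carrying out this local verification---in particular, arranging that $\widetilde D$ is disjoint from $D_0$ and from the rest of the tower, and reading the framings carefully enough to see that the boundary twist compensates for the change in framing introduced by pushing across $D_0$---is where I expect the main technical work to lie.

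Once the local model is established, I would iterate the push-down over all intersections in $\bigcup_j D_0 \cap \inte D_1^j$. Since there are only finitely many such intersections, a generic choice of the arcs $\gamma$ makes them pairwise disjoint; taking the tubular neighborhoods $N(\gamma)$ and the accompanying Whitney discs $\widetilde D$ small enough, they can be arranged to be pairwise disjoint and disjoint from all pre-existing sheets of the tower. The resulting disc $D_0'$ is regularly homotopic to $D_0$, and the collection $D_1'$, consisting of the modified $D_1^j$'s together with the new small discs $\widetilde D$, pairs every self-intersection of $D_0'$ and satisfies $D_0' \cap \inte D_1' = \emptyset$, yielding the required order~$2$ tower.
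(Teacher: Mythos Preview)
Your push-down strategy is sound and takes a more direct route than the paper. The paper first boundary-twists each $D_1^i$ so that $D_0 \cap D_1^i$ is algebraically zero, pairs these intersections with order-two Whitney discs~$D_2$, clears $D_2$ of all interior intersections by finger moves (the step pushing $D_0 \cap D_2$ off over the $D_0$-part of $\partial D_2$ is itself a push-down, producing new $D_0$ self-intersections paired by fresh framed order-one discs), and only then performs the Whitney move on $D_1^i$ across~$D_2$. You instead push each point of $D_0 \cap \inte D_1^j$ straight down to~$D_0$, bypassing the $D_2$ layer entirely. In particular your argument needs no boundary twists at all, so that clause of the statement becomes vacuous on your reading, whereas in the paper's proof it is where the twisting of the resulting order-one discs originates.

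Two points in your write-up deserve correction. Item~(iii) is a non-issue: the finger move is a regular homotopy of the sheet of $D_0$ passing through~$p$, and that sheet is generically disjoint from the Whitney arcs $\partial D_1^j\subset D_0$, so neither $\partial D_1^j$ nor its Whitney framing is disturbed and no compensating boundary twist is required. More importantly, your description of $\widetilde D$ as a ``parallel push-off of $N(\gamma)$'' is not the right model; the correct $\widetilde D$ is simply the small embedded framed Whitney disc that every finger move carries for free (a finger move being the inverse of a Whitney move), living in a $4$-ball at the tip of the finger just past~$b$, with interior automatically disjoint from~$D_0$. The one genuine subtlety you do need to address is that the boundary arcs of $\widetilde D$ on $D_0$ land near $b\in\partial D_1^j$ and may meet the existing Whitney arc of~$D_1^j$ there; this is fixed by pushing one arc off the other as in \cite[Figures~6--8]{Schneiderman:2005-1}, at the harmless cost of an interior intersection $\widetilde D\cap D_1^j$, which is permitted in the conclusion.
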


\begin{proof}
  A boundary twist~\cite[Section~1.3]{Freedman-Quinn:1990-1} of an
  order one Whitney disc $D_1^i$ adds an intersection point
  $D_1^i \cap D_0$.  Perform boundary twists until all such
  intersection points occur algebraically zero times.  The $D_1^i$ may
  now be twisted (essentially or otherwise).  Pair up the intersection points in $D_0 \cap D_1^i$
  and find Whitney discs $D_2$ for each pair.  These always exist by
  simple connectivity of $D^4$.  However we may have that
  $D_1 \cap D_2$ and $D_0 \cap D_2$ are nonempty.  Push the
  intersections $D_1 \cap D_2$ off $D_2$ over the $D_1$ part of its
  boundary by a finger move.  This creates new $D_1 \cap D_1$
  intersections but we do not mind. Push the intersections
  $D_0 \cap D_2$ off the $D_0$ part of the boundary.  This creates new
  $D_0 \cap D_0$ intersections.  These have to be paired up with a new
  order 1 Whitney disc $D_1^j$.  This is always possible, since the
  new intersections came from a finger move (note that the new disc
  $D_1^j$ is framed).  One has to be careful that the new Whitney arcs
  for the new $D_1^j$ do not intersect the Whitney arcs for $D_2$.
  This can easily be arranged by pushing the boundary arc (see
  \cite[Figures~6,~7~and~8]{Schneiderman:2005-1}), but means that the
  new $D_1^j$ intersects the old $D_1^i$ (the order one disc whose
  intersections with $D_0$ are being paired up by $D_2$).  However new
  $D_1 \cap D_1$ intersections are allowed.  We have now arranged that
  $D_2$ is disjoint from everything.  Therefore we can use it to
  perform the Whitney move.  Push $D_1^i$ across $D_2$.  Any
  self-intersections of $D_2$ result in more $D_1 \cap D_1$
  intersections, but again these are permitted. We have now decreased
  the number of intersection points in $D_0 \cap D_1$ by two, at the
  cost of new intersection points in $D_0 \cap D_0$, $D_1 \cap D_1$,
  potentially twisting a $D_1$ disc, and a new $D_1$ Whitney disc
  which is disjoint from $D_0$.  These are all within our budget.  By
  repeating this process we can therefore arrange that
  $D_1 \cap D_0 = \emptyset$ as claimed.  All the operations apart from the boundary twists are
  regular homotopies on the original discs, together with introducing
  new order 1 Whitney discs to pair up new $D_0 \cap D_0$
  intersections.
\end{proof}

We quickly indicate how to see the following statement, since the
argument of the proof of Lemma~\ref{lemma:make-D-0-cap-D-1-empty} is
pertinent.  We do not need the following lemma but include it for
completeness, since it is closely related to
Theorem~\ref{theorem:arf-whitney-alexander-poly}.

\begin{lemma}\label{lemma:even-twisted-discs-cancel}
  Let $D_0 \cup D_1$ be an order two Whitney tower with an even number of essentially twisted Whitney discs and
   $D_0 \cap \inte D_1= \emptyset$.  Then
  there is a regular homotopy of $D_0$ to a new immersed disc $D_0'$
  which supports a framed Whitey tower of order two $D_0' \cup D_1'$
  with $D_0' \cap D_1' = \emptyset$.
\end{lemma}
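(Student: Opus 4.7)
The plan is to eliminate the essentially twisted Whitney discs pair by pair. Since their number is even, first pair them up as $(W_a, W_b), (W_c, W_d), \ldots$. I will describe how to eliminate one such pair while preserving $D_0 \cap \inte D_1 = \emptyset$, at the cost of introducing additional \emph{framed} order one Whitney discs. Iterating yields the conclusion.

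Fix a pair $(W_a, W_b)$. A boundary twist on a Whitney disc changes its twisting coefficient by $\pm 1$ at the cost of introducing a single intersection with $D_0$, while an interior twist changes the twisting coefficient by any multiple of $2$ as a regular homotopy. Combining one boundary twist with an appropriate number of interior twists, I arrange that $W_a$ becomes framed, introducing a single new intersection $p_a \in D_0 \cap W_a$ whose sign is at my disposal through the choice of direction of the boundary twist. Treat $W_b$ similarly, producing $p_b \in D_0 \cap W_b$, and choose the signs of the boundary twists so that $p_a$ and $p_b$ have opposite signs. Now $W_a$ and $W_b$ are framed, and we have introduced two new $D_0 \cap D_1$ intersection points of opposite signs.

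Next, pair $p_a$ and $p_b$ by an immersed disc $D_2$ in $D^4$, and perform boundary twists on $D_2$ until it is framed; these boundary twists introduce extra $D_0 \cap D_2$ intersections, but no matter. Now apply the cleanup procedure from the proof of Lemma~\ref{lemma:make-D-0-cap-D-1-empty} to $D_2$: push each $D_1 \cap D_2$ intersection off $D_2$ over its $D_1$-boundary by a finger move, creating permitted $D_1 \cap D_1$ intersections; and push each $D_0 \cap D_2$ intersection off $D_2$ over its $D_0$-boundary, creating new $D_0 \cap D_0$ intersections which are paired by new \emph{framed} order one Whitney discs, exactly as in that proof. These finger moves preserve the framedness of $D_2$. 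Then use the now-clean framed $D_2$ to perform the Whitney move on $D_0$, eliminating $p_a$ and $p_b$ by a regular homotopy of $D_0$. Because $D_2$ is framed, this Whitney move does not alter the twisting coefficients of $W_a$ and $W_b$, which therefore remain framed.

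The key delicacy, and the main obstacle, is ensuring that $D_2$ itself is framed before the final Whitney move: if $D_2$ were essentially twisted, the Whitney move across it would contribute an odd twist to $W_a$ or $W_b$, reintroducing essentially twisted Whitney discs and undoing the work. The freedom to boundary twist $D_2$ without concern, combined with the cleanup procedure of Lemma~\ref{lemma:make-D-0-cap-D-1-empty} which preserves framedness of $D_2$, is precisely what lets us avoid this pitfall. After iterating over all pairs of essentially twisted discs one obtains the desired framed order two Whitney tower $D_0' \cup D_1'$ with $D_0' \cap \inte D_1' = \emptyset$.
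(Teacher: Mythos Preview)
There is a genuine gap at the construction of $D_2$.  A Whitney disc pairing $p_a$ and $p_b$ must have boundary consisting of an arc on $D_0$ from $p_a$ to $p_b$ together with an arc on $D_1$ from $p_a$ to $p_b$.  But $p_a\in W_a$ and $p_b\in W_b$ lie on \emph{different connected components} of $D_1$, so there is no arc in $D_1$ joining them, and hence no Whitney disc $D_2$ of the kind you describe exists.  Consequently there is no Whitney move, on $D_0$ or otherwise, that cancels $p_a$ against~$p_b$.

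This is exactly the difficulty the paper's proof addresses by invoking \cite[Theorem~2.15]{Conant-Schneiderman-Teichner:2012-2}: that result supplies regular homotopies (a transfer move) which bring both new $D_0\cap D_1$ intersection points onto the \emph{same} order-one Whitney disc.  Only then can a legitimate order-two Whitney disc $D_2$ be chosen, and from that point on the paper's argument proceeds essentially as you outline (clean up $D_2$ via the procedure of Lemma~\ref{lemma:make-D-0-cap-D-1-empty}, then do the Whitney move).  Note also that, once both intersections sit on a single $D_1^i$, the paper pushes $D_1^i$ across $D_2$ rather than $D_0$; this is the move used in Lemma~\ref{lemma:make-D-0-cap-D-1-empty} and it leaves the Whitney-disc structure of the remaining $D_1^j$ undisturbed.
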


\begin{proof}
  For each pair of essentially twisted Whitney discs, perform interior twists so
  that one has twisting coefficient $+1$ and the other has twisting
  coefficient $-1$.  Then perform boundary twists so that both are
  framed.  This introduces a pair of $D_1 \cap D_0$ intersections.
  The proof of
  \cite[Theorem~2.15]{Conant-Schneiderman-Teichner:2012-2} enables us
  to perform regular homotopies so that these arise on the same order~1 Whitney disc. We may then pair them up with an order 2 Whitney
  disc~$D_2$. Now we apply the argument of the proof of
  Lemma~\ref{lemma:make-D-0-cap-D-1-empty} to trade the $D_0 \cap D_1$
  intersections for higher order $D_1 \cap D_1$ intersections and
  potentially new $D_0 \cap D_0$ intersections which are paired by new
  framed order 1 discs.  This produces an order 2 framed Whitney tower
  as claimed.
\end{proof}

\section{The intersection form of an immersed disc exterior in the
  4-ball}
\label{section:intersection-form}

In this section we give the detailed description of the matrices
$\Omega$ and $\Psi$ from the introduction (Theorems~\ref{theorem:compute-alex-poly-from-whitney-data} and~\ref{theorem:main-presentation-Blanchfield} respectively), in terms of intersection
data of the Whitney and accessory discs, and we state our main
technical results, that relate these matrices to the intersection
pairing of an immersed disc exterior.

Suppose that a knot $K$ bounds an order two presentation tower as
constructed in Section~\ref{section:presentation tower}, where the
order zero disc $D_0$ has $d=2k$ self-intersection points.  We may
assume by Lemma~\ref{lemma:-make-pi-1-Z} that
$\pi_1(D^4 \sm \nu D_0) \cong \Z$.  Consider the free module
$\Z[\Z]^{d}$, with basis elements $e_{2i-1}$, $i=1,\dots,k$
corresponding to order one Whitney discs $D_1^1,\dots,D_1^k$ pairing
up the double points, and with the basis elements $e_{2i}$,
$i=1,\dots,k$ corresponding to accessory discs $A^1_1,\dots,A^k_1$
(see \cite[Section~3.1]{Freedman-Quinn:1990-1} and
Lemma~\ref{lemma:accessory-discs-exist-part-1}) for half of the
self-intersections of $D_0$, the double point with a positive sign for
each pair which is paired up by one of the $D_1^i$.

The matrix $\Omega$ described below is hermitian, that is
$\Omega=\overline{\Omega}{}^T$, and defines a pairing
$\Omega \colon \Z[\Z]^d \times \Z[\Z]^d \to \Z[\Z]$.  Here the
overline denotes the involution on the group ring $\Z[\Z]$ defined by
extending $t \mapsto t^{-1}$ linearly.  We abuse notation and conflate
the matrix and the pairing which it determines on $\Z[\Z]^d$.

As before define $W:= D^4 \sm \nu D_0$. Choose a path from a chosen
basepoint of each $D_i^i$, and of each $A_1^j$, to the basepoint of
$W$.  For each intersection point $q$ involving $D_1^i$, choose a path
from $q$ to the basepoint of $D_1^i$, inside $D_1^i$ and missing all
double points.  Similarly for the $A_1^i$.  For each intersection
point in $D_1^i \cap D_1^j$, $D_1^i \cap A_1^j$ and
$A_1^i \cap A_1^j$, there is an associated element $\pm t^{\ell}$ of
$\pi_1(W) \cong \Z$, defined by considering the usual concatenation of
paths.  By summing over such intersection points we obtain an element
$p(t)$ of $\Z[\Z]$.  Let $p_{rs}(t)$ be the polynomial associated to
the pair $(e_r,e_s)$.  Note that $p_{rs}(t) = \ol{p_{sr}(t)}$.  When
$i=j$, we abuse notation and use $D_1^i \cap D_1^i$ and
$A_1^i \cap A_1^i$ for the double point set of the immersion.  Here
there is an indeterminacy in $p_{rr}(t)$, up to $t=t^{-1}$, due to a
lack of ordering of sheets at an intersection point.  However this
will not affect the outcome of the computation, so we may make any
choice of ordering.

\subsection{Precise description of the matrix $\Omega$}

The $(r,s)$-entry $\Omega_{rs}$ corresponds to intersection data
involving the discs associated to the pair $(e_r,e_s)$ as given below.
The order of the pair matters since $\Omega_{rs}=\ol{\Omega}_{sr}$.
Define $z:= (1-t)(1-t^{-1})$.

\begin{itemize}
\item For $r \neq s$ and $\{r,s\} \neq \{2i-1,2i\}$,  $\Omega_{rs} = zp_{rs}(t)$.
\item For $\{r,s\} = \{2i-1,2i\}$ for some $i$, $\Omega_{rs} = zp_{rs}(t) + 1$, where
  $p_{rs}(t)$ is computed from intersection points $D_1^i \cap A_1^i$.
\item When $r=s=2i-1$,
  $\Omega_{rs} = zp_{rr}(t) + \ol{zp_{rr}(t)} + za_i$ where
  $p_{rr}(t)=p_{ss}(t)$ arises from the self intersection points of
  $D_1^i$, and $a_i \in \Z$ is the twisting of the Whitney framing
  relative to the disc framing for $D_1^i$.
\item When $r=s=2i$,
  $\Omega_{rs} = zp_{ss}(t) + \ol{zp_{ss}(t)} + zb_i + 1$ where
  $p_{ss}(t) = p_{rr}(t)$ arises from the self intersection points of
  $A_1^i$, and $b_i \in \Z$ is the twisting of the accessory framing
  relative to the disc framing for $A_1^i$.
\end{itemize}

The first and last cases only are relevant to $\Psi$ from Theorem~\ref{theorem:main-presentation-Blanchfield}.
Precise definitions of the Whitney and accessory framings are given in
Section~\ref{section:framings-definitions}.

For practical purposes it is not always convenient to have the
accessory disc correspond to a double point with positive intersection
sign.  If we use a double point with negative sign, then replace the $+1$ in $\Omega_{2i,2i}$ entry in the last bullet point with a $-1$.

\subsection{Structure of the intersection form of $W$}
  The following is one of our main technical results.

\begin{theorem}
  \label{theorem:intersection-form-using-whitney-discs}
  Suppose that $D_0$ has $d=2k$ double points, and $D_1^i$,
  for $i=1,\dots,k$, are order one Whitney discs pairing up the double
  points of $D_0$, whose interiors are disjoint from~$D_0$.  Let
  $A_1^i$, for $i=1,\dots,k$, be an accessory disc for the $2i$-th double
  point, where the double points are ordered so that even numbered
  points have positive sign.  Then we have the following:
  \begin{enumerate}
  \item\label{item:1-int-form-thm} The homotopy group $\pi_2(W)$ is a
    free $\Z[\Z]$ module of rank $d$.
  \item\label{item:2-int-form-thm} There is a linearly independent set
    $\{S_i\}$ of immersed 2-spheres which generate a free submodule
    $F$ of $\pi_2(W)$ of rank $d$ on which the equivariant
    intersection form $\lambda \colon F \times F \to \Z[\Z]$ can be
    written as $z(X + (zY + \ol{zY}^T))$
    where $X$ is a block diagonal sum of $k$ copies of the form
    \[
      \begin{bmatrix}
        za_i & 1 \\ 1 & 1+ zb_i
      \end{bmatrix}
    \]
    with $a_i,b_i \in \Z$, and $Y$ is an upper triangular $d \times d$
    matrix.
  \item\label{item:3-int-form-thm} The $S_i$ form a basis for
    $\pi_2(W) \otimes_{\Z[\Z]} \Z$.
  \item\label{item:4-int-form-thm} The coefficients $a_i,b_i \in \Z$
    in the $i$th $2 \times 2$ block diagonals of $X$ are the twisting
    numbers of the $i$th Whitney disc $D^i_1$ and the $i$th accessory disc
    $A^i_1$ respectively.
  \item\label{item:5-int-form-thm} The coefficients of $Y$ are the
    $\Z[\Z]$-twisted intersection numbers and self-intersection
    numbers of the $D_1^i$ and the $A_1^i$.
  \end{enumerate}
\end{theorem}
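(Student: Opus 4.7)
My plan is to follow the authors' sectional roadmap: first establish that $\pi_2(W)$ is free (part (1)), then construct candidate spheres from the Whitney and accessory discs and compute their intersection form (parts (2), (4), (5)), and finally verify the basis claim (part (3)).

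For part (1), I would pass to the infinite cyclic cover $\widetilde{W} \to W$, which is simply connected since $\pi_1(W) \cong \Z$, so Hurewicz gives $\pi_2(W) \cong H_2(\widetilde{W};\Z)$ as $\Z[\Z]$-modules. Using a handle decomposition of $W$ that reflects the double-point structure of $D_0$ (after removing a neighbourhood of $D_0$ from $D^4$ one has one $0$-handle, one $1$-handle generating $\pi_1$, and further handles corresponding to the double points), a direct chain-level computation in the lifted $\Z[\Z]$-chain complex will show $H_2(\widetilde{W};\Z)$ is a free $\Z[\Z]$-module of rank exactly $d$. The same computation yields $H_2(W;\Z) \cong \pi_2(W) \otimes_{\Z[\Z]} \Z \cong \Z^d$, with a canonical basis given by small meridional $2$-spheres at each double point, which will serve as the reference basis for part (3).

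For parts (2), (4), and (5), I would construct each $S_i$ by capping off the boundary of the corresponding Whitney or accessory disc using small pieces of the second sheet of $D_0$ near each double point met by the boundary. Concretely, at a double point $p$ the two local sheets of $D_0$ meet transversely, and any arc of $\p D_1^i$ or $\p A_1^i$ passing through $p$ can be closed off by a small disc lying in the second sheet, pushed slightly into $W$; the result is an immersed sphere in $W$. Computing $\lambda(S_r,S_s)$ then splits into two types of contributions. First, each interior intersection between the disc portions of $S_r$ and $S_s$ appears in $\lambda$ multiplied by $z=(1-t)(1-t^{-1})$: this factor arises because each boundary cap must be pushed off by a normal translate that contributes a $(1-t)$ at each end, so an interior intersection point with coefficient $\pm t^{\ell}$ in $p_{rs}(t)$ becomes $\pm z t^{\ell}$ in $\lambda$. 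Organising these polynomial data into an upper-triangular $Y$ (using hermiticity to fill in the lower half) produces the $z(zY + \overline{zY}^T)$ summand. Second, local contributions near the paired double points produce the $zX$ block: the off-diagonal $X_{2i-1,2i}=1$ counts the single transverse crossing of the capping pieces of $D_1^i$ and its associated accessory disc $A_1^i$ at their shared double point; $X_{2i-1,2i-1}=za_i$ and $X_{2i,2i}=1+zb_i$ come from self-intersections of push-offs of $S_{2i-1}$ and $S_{2i}$ along the Whitney and accessory framings respectively, exhibiting $a_i$ and $b_i$ as the twisting coefficients (the extra $+1$ in $X_{2i,2i}$ reflects the positive sign convention on the paired double point). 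For part (3), since each $[S_i] \in H_2(W;\Z)$ is supported near its distinguished double point and pairs nontrivially with the meridional sphere there, the $S_i$ form a basis.

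The main obstacle will be the careful bookkeeping in the intersection computation of Step 3: verifying that the capping construction produces exactly one factor of $z$ per interior intersection (with no extraneous boundary--boundary terms outside the $zX$ block), and tracking all signs. The most delicate local analysis is near the shared double point of a Whitney disc and its associated accessory disc, where one must simultaneously recover the off-diagonal entry $1$, the $za_i$ self-intersection, and the $1+zb_i$ self-intersection, while correctly implementing the Whitney and accessory framings to be defined in Section~\ref{section:framings-definitions}.
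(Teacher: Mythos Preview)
Your sphere construction is the critical gap. You propose to cap off each Whitney or accessory disc with ``small pieces of the second sheet of $D_0$,'' but $D_0$ lies outside $W$, and a small disc pushed off a sheet into $W$ does not close the boundary circle of a Whitney or accessory disc into a sphere: that boundary is essentially a $(1,1)$-curve on the Clifford torus in $\partial_+$ and bounds no disc there. The paper's construction is quite different. Each $S_i$ is obtained by ambient surgery on the Clifford torus (for Whitney discs, on the connected sum of two Clifford tori via an annulus $N_1$) using \emph{two} push-offs of the disc to surger an auxiliary annulus into a disc, and then \emph{two} push-offs of that disc to surger the torus into a sphere. Consequently each $S_i$ contains four parallel copies of its Whitney or accessory disc, and a single transverse intersection of two discs produces sixteen intersections between the corresponding spheres, contributing $\pm z^2 t^{\ell}$ to $\lambda$, not $\pm z t^{\ell}$ as you state. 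Your ``$(1-t)$ at each end'' mechanism does not account for this, and your claimed factor of $z$ is inconsistent with the target summand $z(zY+\overline{zY}^T)$, which is $z^2$ times the raw disc intersection data; your text jumps from one to the other without explanation.

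The same four-copies structure drives the $X$-block. The off-diagonal $1$ is not an intersection of ``capping pieces'' but arises because one Clifford torus meets the accessory disc once, and Lemma~\ref{lemma:4-parallel-copies} converts this into a contribution of $z$ to~$\lambda$. The $za_i$ term comes from performing $a_i$ boundary twists on the Whitney disc to align the disc framing with the surgery framing before the first surgery; each twist creates a disc--annulus intersection whose four-copies propagation yields~$z^2$. The extra $+1$ in $1+zb_i$ is not the positive-sign convention per se but a genuine framing discrepancy: the surgery framing of the $(1,1)$-curve on the Clifford torus differs from the disc framing by $+1$, forcing one further boundary twist. On your side points: the handle decomposition you invoke for part~(1) is only available when $D_0$ arises from crossing changes (compare Lemma~\ref{lemma:spheres-give-basis}); in general the paper instead uses Poincar\'e duality and the universal coefficient spectral sequence to show $H_2(W;\Z[\Z])$ is free. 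And there are no ``meridional $2$-spheres'' at a double point of $D_0$ in~$W$: the natural geometric generators of $H_2(W;\Z)$ are the Clifford tori, and part~(3) holds because the $S_i$ are obtained from them by surgery.
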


Comparing the matrix $\Omega$ defined above with the matrix
of the intersection form of~$W$, we have $\lambda = z \Omega$.

The proof of this theorem will take the entire next section.  In the
course of the proof we explicitly construct immersed 2-spheres $S_i$
which represent elements of $\pi_2(W)$ and compute the intersection
form using these explicit elements and intersections between Whitney
discs and accessory discs.

It is quite possible that $F = \pi_2(W)$, however we are only able to
prove this in the special case that $D_0$ arises from crossing
changes; see Lemma~\ref{lemma:spheres-give-basis}.

We have another version which only uses accessory discs, and which is
used to deduce Theorem~\ref{theorem:main-presentation-Blanchfield}.
For the purpose of deducing
Theorem~\ref{theorem:main-presentation-Blanchfield} we give the
explicit statement.

\begin{theorem}
  \label{theorem:intersection-form-using-accessory-discs}
  Suppose that $D_0$ has $d$ double points and $A^i$ are accessory
  discs $(i=1,\dots,d)$ whose interiors are disjoint from $D_0$.
    Then we have the
  following:
  \begin{enumerate}
  \item The homotopy group $\pi_2(W)$ is a free $\Z[\Z]$ module of
    rank $d$.
  \item There is a linearly independent set $\{S_i\}$ of immersed
    2-spheres which generate a free submodule $F$ of $\pi_2(W)$ of
    rank $d$ on which the equivariant intersection form
    $\lambda \colon F \times F \to \Z[\Z]$ can be written as
    $z(X + (zY + \ol{zY}^T))$
    where $X$ is a diagonal matrix with entries $ \pm 1 + zb_i$, with
    $b_i \in \Z$, and $Y$ is an upper triangular $d \times d$
    matrix.
  \item The $S_i$ form a basis for $\pi_2(W) \otimes_{\Z[\Z]} \Z$.
  \item The coefficients $b_i \in \Z$ in $X$ are the twisting numbers
    of the $i$th accessory disc $A_i$, and the $\pm 1$ is determined
    by the sign of the $i$th double point.
  \item The coefficients of $Y$ are the $\Z[\Z]$-twisted intersection
    numbers and self-intersection numbers of the $A_i$.
  \end{enumerate}
\end{theorem}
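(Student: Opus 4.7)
The plan is to adapt the proof of Theorem~\ref{theorem:intersection-form-using-whitney-discs} to the accessory-only setting, replacing each Whitney/accessory disc pair at a double point of $D_0$ by a single accessory disc. Item~(1), freeness of $\pi_2(W)$ as a $\Z[\Z]$-module of rank~$d$, depends only on $\pi_1(W) \cong \Z$ and the number of double points of $D_0$; the argument of Section~\ref{section:pi-2-is-free} applies here unchanged.

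For each accessory disc $A_i$ based at the $i$-th double point $p_i$ of $D_0$, I would construct an immersed sphere $S_i \subset W$ by attaching $A_i$ to a small cap disc $C_i$ sitting near $p_i$ on $\partial \nu D_0$; this is the natural accessory-only analogue of the sphere construction of Section~\ref{section:construction-of-spheres}, giving one sphere per double point. To compute $\lambda(S_i, S_j)$ I would decompose each sphere as $A_i \cup C_i$ and sum the four intersection contributions $A_i \cap A_j$, $A_i \cap C_j$, $C_i \cap A_j$, and $C_i \cap C_j$. Each cap $C_i$ lies near $\partial \nu D_0$, so whenever it is pushed off to make intersections transverse it contributes a factor of $(1 - t^{\pm 1})$; this yields the overall factor $z = (1-t)(1-t^{-1})$ in the formula $\lambda = z(X + (zY + \ol{zY}^T))$. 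Off-diagonally, the remaining data is a $\Z[\Z]$-weighted count of points in $A_i \cap A_j$, which I record as an upper triangular matrix $Y$; the Hermitian symmetry $\lambda(S_j,S_i) = \ol{\lambda(S_i,S_j)}$ then produces the $\ol{zY}^T$ term.

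On the diagonal, $\lambda(S_i,S_i)$ will decompose into three contributions: the self-intersection of $A_i$ (recorded in $y_{ii}$ and contributing $z(z y_{ii} + \ol{z y_{ii}})$), a term $z \cdot z b_i$ coming from the twisting of the accessory framing relative to the disc framing of $A_i$ (giving the $b_i$ in $X_{ii}$), and a residual $\pm z$ coming from the local model of $C_i$ near $p_i$, whose sign is the sign of $p_i$ as a double point of $D_0$ (giving the $\pm 1$ in $X_{ii}$). The main obstacle will be this last contribution: showing that $C_i$ contributes exactly $\pm 1$ to $X_{ii}$ with the correct sign requires a careful local model near $p_i$, and a precise comparison of the accessory framing from Section~\ref{section:framings-definitions} with the disc framing used when pushing $C_i$ off itself. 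Granted this, items~(4) and~(5) read off directly from the computation.

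Finally, for item~(2) the linear independence of the $\{S_i\}$ over $\Z[\Z]$ follows from $\det \lambda = z^d \det(X + zY + \ol{zY}^T)$ together with $\det X \equiv \pm 1 \pmod{t-1}$, which forces $\det \lambda \neq 0$; a standard adjugate argument against $\sum c_i S_i = 0$ then gives $c_i = 0$, so the $S_i$ span a free submodule $F \subset \pi_2(W)$ of rank $d$. For item~(3), since $\lambda$ vanishes modulo $t-1$ one cannot detect basis-ness from the intersection form; instead I would compute $[S_i] \in \pi_2(W) \otimes_{\Z[\Z]} \Z \cong H_2(W;\Z) \cong \Z^d$ directly, observing that $C_i$ realizes $S_i$ as a meridional $2$-sphere of $\nu D_0$ at $p_i$, so that the classes $\{[S_i]\}$ are precisely the standard generators indexed by the $d$ double points.
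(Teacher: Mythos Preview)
Your sphere construction is not the paper's, and as stated it does not produce the claimed intersection form. The boundary of $A_i$ is the push-off of the entire double point loop to $\partial_+$; it is not a small circle near $p_i$, so there is no ``small cap disc $C_i$ sitting near $p_i$'' with $\partial C_i = \partial A_i$. In the paper (Section~\ref{section:construction-of-spheres}) the sphere $S_i$ is obtained by surgering the Clifford torus $T$ of $p_i$: first two push-offs $A_\pm$ of $A_i$ surger the annulus $N$ (the circle bundle over the double point loop) into a disc $D$, and then two push-offs of $D$ surger $T$ into $S_i$. The upshot is that $S_i$ contains \emph{four} parallel copies of $A_i$, not one.

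This matters for the arithmetic. By Lemma~\ref{lemma:4-parallel-copies}, a single transverse intersection of a surface $\Sigma$ with $A_j$ contributes $z$ to $\Sigma\cdot S_j$; applying this with $\Sigma$ equal to each of the four copies of $A_i$ in $S_i$ gives $z^2$ per point of $A_i\cap A_j$. That is the source of the $z^2$ in front of $Y$. Your construction, with one copy of $A_i$ and a single cap, cannot account for this factor, and your heuristic ``each cap contributes $(1-t^{\pm1})$'' does not yield the required $z\cdot zY$ off-diagonal terms. Likewise the diagonal $\pm 1$ in $X$ is not a local cap contribution: it comes from the $+1$ (or $-1$) discrepancy between the surgery framing of the $(1,1)$-curve on the Clifford torus and the disc framing of $D$ (Section~\ref{section:int-spheres-4} and Lemma~\ref{lemma:disc-for-framing-computation}), forcing a single boundary twist that contributes $\pm z$ to $\lambda(S_i,S_i)$.

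Finally, for item~(3): $S_i$ is not a meridional $2$-sphere of $\nu D_0$ (there is no such sphere at a self-intersection). The paper shows instead that the Clifford tori generate $H_2(W;\Z)\cong\Z^d$ (via excision and duality in the proof of Lemma~\ref{lemma:rank-is-d}), and $S_i$ is homologous to the $i$th Clifford torus because it is obtained from it by surgery. Your linear-independence argument via $\det\lambda\neq 0$ is fine once the correct spheres are in hand, but the paper deduces it from item~(3) and $P(1)$ being unimodular.
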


Compare this with the matrix $\Psi$ from the introduction to observe
that $\lambda = z \Psi$.
Both sets of spheres from the above two theorems arise from ambient surgery on a basis of $H_2(W;\Z)$
comprising Clifford tori of the double points, as we will see in
Section~\ref{section:construction-of-spheres}.  Restricting the proof
of Theorem~\ref{theorem:intersection-form-using-whitney-discs} to the
accessory discs only gives the proof of
Theorem~\ref{theorem:intersection-form-using-accessory-discs}.
Therefore we focus on
Theorem~\ref{theorem:intersection-form-using-whitney-discs}.

\section{Proofs of the intersection form
  Theorems~\ref{theorem:intersection-form-using-whitney-discs}
  and~\ref{theorem:intersection-form-using-accessory-discs}}
\label{section:Proof}

\subsection{The second homotopy group of $W$ is a free module}
\label{section:pi-2-is-free}

In this subsection we prove the following.

\begin{lemma}\label{lemma:pi-2-free-module}
  The homotopy group $\pi_2(W)$ is a free $\Z[\Z]$ module.
\end{lemma}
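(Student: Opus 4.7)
The plan is to identify $\pi_2(W) \cong H_2(W;\Lambda)$ via Hurewicz (with $\Lambda = \Z[t,t^{-1}]$), present $\pi_2(W)$ as the cokernel of an injection between finitely generated free $\Lambda$-modules, show that the resulting short exact sequence splits by verifying $\Ext^1_\Lambda(\pi_2(W),\Lambda) = 0$, and conclude freeness using Swan's theorem that finitely generated projective modules over $\Z[t,t^{-1}]$ are free.

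First, I would compute $H_*(W;\Lambda)$. The low-degree groups are easy: $H_0 = \Z$ and $H_1 = 0$ because $\pi_1(W) = \Z$, while $H_4 = 0$ since $\partial W \neq \emptyset$. For the key vanishing $H_3(W;\Lambda) = 0$, Poincar\'e--Lefschetz duality with local coefficients gives $H_3(W;\Lambda) \cong H^1(W,\partial W;\Lambda)$. The long exact sequence of the pair $(W,\partial W)$, together with the fact that a meridian of $K$ sits on $\partial W$ so that $\pi_1(\partial W) \twoheadrightarrow \pi_1(W)$, forces $H_0(W,\partial W;\Lambda) = H_1(W,\partial W;\Lambda) = 0$; the universal coefficient spectral sequence over $\Lambda$ then yields $H^1(W,\partial W;\Lambda) = 0$, hence $H_3(W;\Lambda) = 0$.

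Second, I would build a finite free resolution of $\pi_2(W)$. Since $W$ is a compact 4-manifold with non-empty boundary, it admits a finite CW structure with no 4-cells; lifting to the universal cover gives a chain complex $C_* = (C_0, C_1, C_2, C_3)$ of finitely generated free $\Lambda$-modules computing $H_*(\widetilde{W})$. Using that the augmentation ideal of $C_0 \to \Z$ is free (as $\Z$ has projective dimension one over $\Lambda$) together with $H_1 = 0$, successive splitting arguments show that $\mathrm{im}(d_2) = \ker(d_1)$ and then $\ker(d_2)$ are free $\Lambda$-modules. From $H_3 = 0$ combined with $C_4 = 0$, the differential $d_3$ is injective. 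So $\pi_2(W) = \coker\bigl(d_3 \colon C_3 \to \ker(d_2)\bigr)$ sits in a length-one free resolution
\[
0 \to C_3 \to \ker(d_2) \to \pi_2(W) \to 0.
\]

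Finally, applying $\Hom_\Lambda(-,\Lambda)$ to this resolution identifies $\Ext^1_\Lambda(\pi_2(W),\Lambda)$ with a cokernel which, via the universal coefficient spectral sequence, matches $H^3(W;\Lambda)$ (the other contributions to $H^3$ vanish because $H_1 = H_3 = 0$ and $\mathrm{pd}_\Lambda(\Z) = 1$); Poincar\'e--Lefschetz duality then identifies $H^3(W;\Lambda) \cong H_1(W,\partial W;\Lambda) = 0$. By additivity $\Ext^1_\Lambda(\pi_2(W), C_3) = 0$, so the short exact sequence splits, $\pi_2(W)$ is a direct summand of $\ker(d_2)$ and hence finitely generated projective, and Swan's theorem upgrades projective to free. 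The most delicate part will be executing Poincar\'e duality with local $\Lambda$-coefficients and running the universal coefficient spectral sequence carefully to produce the identification $\Ext^1_\Lambda(\pi_2(W),\Lambda) \cong H^3(W;\Lambda)$ over the non-PID $\Lambda$.
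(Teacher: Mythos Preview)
Your argument is correct, but it takes a different and somewhat longer route than the paper's.

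The paper proceeds more directly: by Poincar\'e--Lefschetz duality, $H_2(W;R) \cong H^2(W,\partial W;R)$. Running the universal coefficient spectral sequence for $H^*(W,\partial W;R)$ (using, as you do, that $H_0(W,\partial W;R)=H_1(W,\partial W;R)=0$), one finds that $H^2(W,\partial W;R)\cong \Hom_R(H_2(W,\partial W;R),R)$. The paper then invokes a known structural fact specific to $R=\Z[\Z]$, namely that $\Hom_R(A,R)$ is free for \emph{any} $R$-module~$A$ (citing Kawauchi and Borodzik--Friedl). This finishes the proof in one line, without any chain-level resolution or appeal to Swan's theorem.

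Your route---building a length-one free resolution of $\pi_2(W)$ from a $3$-dimensional CW model, identifying $\Ext^1_\Lambda(\pi_2(W),\Lambda)$ with $H^3(W;\Lambda)\cong H_1(W,\partial W;\Lambda)=0$, splitting the sequence, and applying Swan---is sound and more self-contained in that it uses only the standard Swan theorem rather than the dual-module freeness lemma. One small point of phrasing: at the intermediate steps you only get that $\ker\epsilon$, $\ker d_1$, and $\ker d_2$ are \emph{projective} (as summands of free modules); calling them free already requires Swan. This does not affect the logic, since projectivity suffices for the splitting arguments and Swan is applied at the end anyway, but it would be cleaner to say ``projective'' until the final step.

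In summary: both proofs hinge on the same duality/UCSS inputs, but the paper's shortcut is to recognise $\pi_2(W)$ as a dual module and quote freeness of duals, whereas you establish projectivity by an $\Ext$-vanishing argument and then upgrade via Swan.
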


\begin{proof}
   Let $R:=\Z[\Z]$.  Since $\pi_1(W) \cong \Z$, we have $H_1(W;R) = 0$ and
  $H_2(W;R) \cong \pi_2(W)$.  We therefore need to show that
  $H_2(W;R)$ is a free module, which follows from general arguments
  on 4-manifolds with fundamental group~$\Z$.  The relative cohomology
  group $H^2(W,\partial W;R)$ can be computed using the universal
  coefficient spectral sequence
  \[
    E_2^{p,q} = \Ext^p_{R}(H_q(W,\partial W;R), R) \Longrightarrow
    H^n(W,\partial W;R),
  \]
  where the differential $d_r$ on $E_r^{p,q}$ has degree $(r,1-r)$
  (see e.g.~\cite[Theorem~2.3]{Levine-77-knot-modules}).  First, from
  the long exact sequence of a pair and from $H_1(W;R) = 0$, it
  follows that $H_1(W,\partial W;R) = 0$.  From this and from
  $H_0(W,\partial W;R) = 0$, it follows that the only nontrivial term
  on the line $p+q=2$ on the $E^2$ page is
  $E^{0,2}_2= \Hom_{R}(H_2(W,\partial W;R),R)$.  Since
  $H_1(W,\partial W;R) = 0$ and since $R$ has homological dimension
  $2$ (or since $H_0(W,\partial W;R)=0$), the differentials
  $d_r^{0,2}$ at $E^{0,2}_r$ for $r\ge 2$ are into trivial codomains
  and thus trivial.  Therefore we deduce that
  \[
    H^2(W,\partial W;R) \cong  \Hom_{R}(H_2(W,\partial W;R),R).
  \]
  This is a free module, since $\Hom_{R}(A,R)$ is free for any
  $R$-module $A$, by~\cite[Lemma~3.6]{Kaw86} or~\cite[Lemma~2.1]{Borodzik-Friedl-2013-alg-unknotting}.
    Therefore $H^2(W,\partial W;R) \cong H_2(W;R)$ is free as claimed.
\end{proof}

\subsection{Construction of spheres in $\pi_2(W)$}
\label{section:construction-of-spheres}

We proceed to construct explicit elements of $\pi_2(W)$ whose
intersection data can be computed in terms of intersection and
twisting data for the discs $D_1$ and $A_1$.

Consider the Clifford torus for a self-intersection point of~$D_0$.  A
neighbourhood of a self-intersection point is homeomorphic to $\R^4$,
in which the two intersecting sheets sit as $\R^2 \times \{0\}$ and
$\{0\} \times \R^2$.  The Clifford torus $T:= S^1 \times S^1 \subset
\R^2 \times \R^2 \cong \R^4$ is shown in a 5-still movie diagram in
Figure~\ref{figure:clifford-torus}.  We may assume that $T$ lies in
$\partial W$.  We will call the curves $S^1\times *$ and $*\times S^1$ ($*\in S^1$),
which are meridians of the two sheets, the \emph{standard basis curves} of~$T$.

\begin{figure}[t]
\begin{center}
\begin{tikzpicture}[scale=.78]
\node[anchor=south west,inner sep=0,scale=.78] at (0,0)
{\includegraphics[scale=0.45]{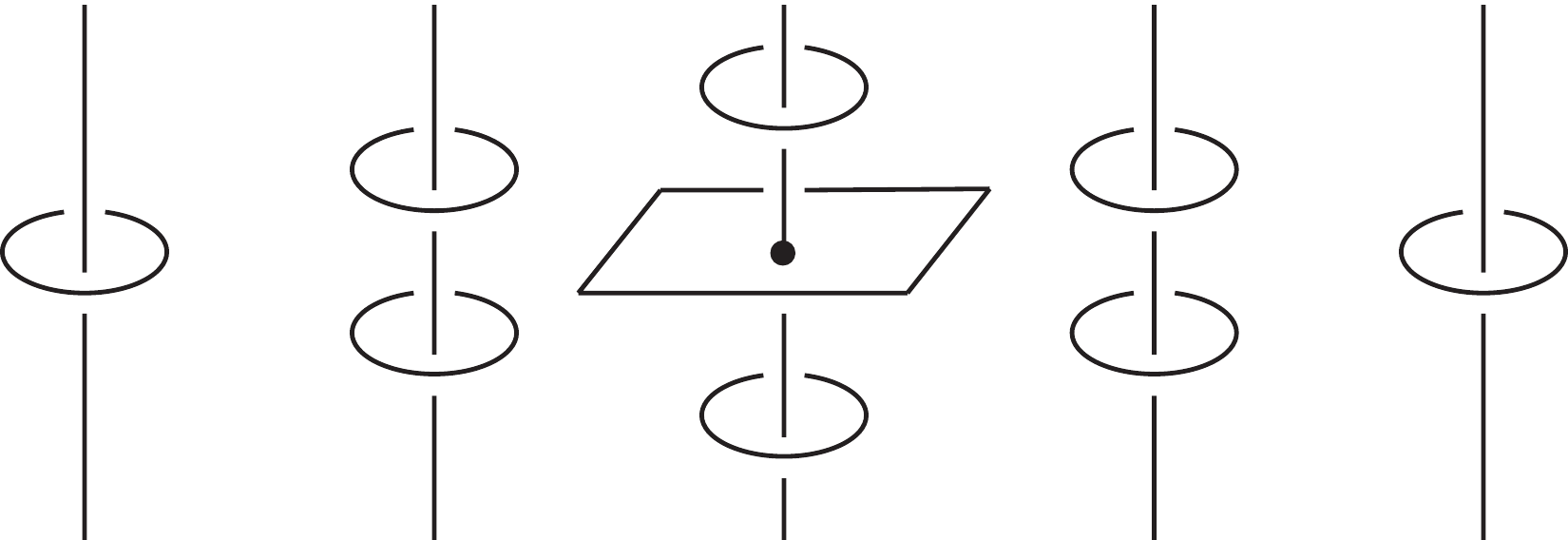}};
\small
\node at (1.53,2.3) {\em T};
\node at (12.75,2.3) {\em T};
\node at (4.33,3.03) {\em T};
\node at (4.33,1.7) {\em T};
\node at (10.1,1.7) {\em T};
\node at (10.1,3.03) {\em T};
\node at (7.16,3.63) {\em T};
\node at (7.16,.98) {\em T};
\end{tikzpicture}
\end{center}
\caption{A Clifford torus $T$ in the neighbourhood of an intersection
  point of two planes in
  $\mathbb R^4 \cong \mathbb R^3 \times \mathbb R$.  One of the planes
  lies in $\mathbb R^3 \times \{0\}$, while the other intersects each
  slice $\mathbb R^3 \times \{\text{pt}\}$ in a line.}
\label{figure:clifford-torus}

\end{figure}

We describe the basic construction of a sphere $S_{2i}$ using an
accessory disc $A_1^i$ for the double point $p$.  The authors learnt
this construction from Peter Teichner.  We will postpone detailed
discussion of framing issues for later computations, for now
contenting ourselves with conveying the main idea of the construction.
We may modify the construction later by inserting interior or boundary
twists into the procedure, in order to arrange that our spheres have
framed normal bundles.

Consider a double point loop $\a$ on $D_0$, and consider the normal
circle bundle to $D_0$ restricted to $\a$.  This defines a torus in
$D^4$.  The intersection of this torus with~$W = D^4 \sm \nu D_0$
defines $N := (\a \times S^1) \cap \partial W$, which is the image of
the map of an annulus into~$W$.  The boundary of $N$ is the two
generating curves on $T$ for $H_1(T;\Z)$.  The boundary of $N$ is thus
a wedge $S^1 \vee S^1$, since the standard basis curves of $T$
intersect in a single point.  The part of $N$ which lies in a $D^4$
neighbourhood of the intersection point is shown in
Figure~\ref{figure:clifford-torus-plus-n}.

\begin{figure}[H]
\begin{center}
\begin{tikzpicture}[scale=.9]
\node[anchor=south west,inner sep=0,scale=.9] at (0,0)
{\includegraphics[scale=0.45]{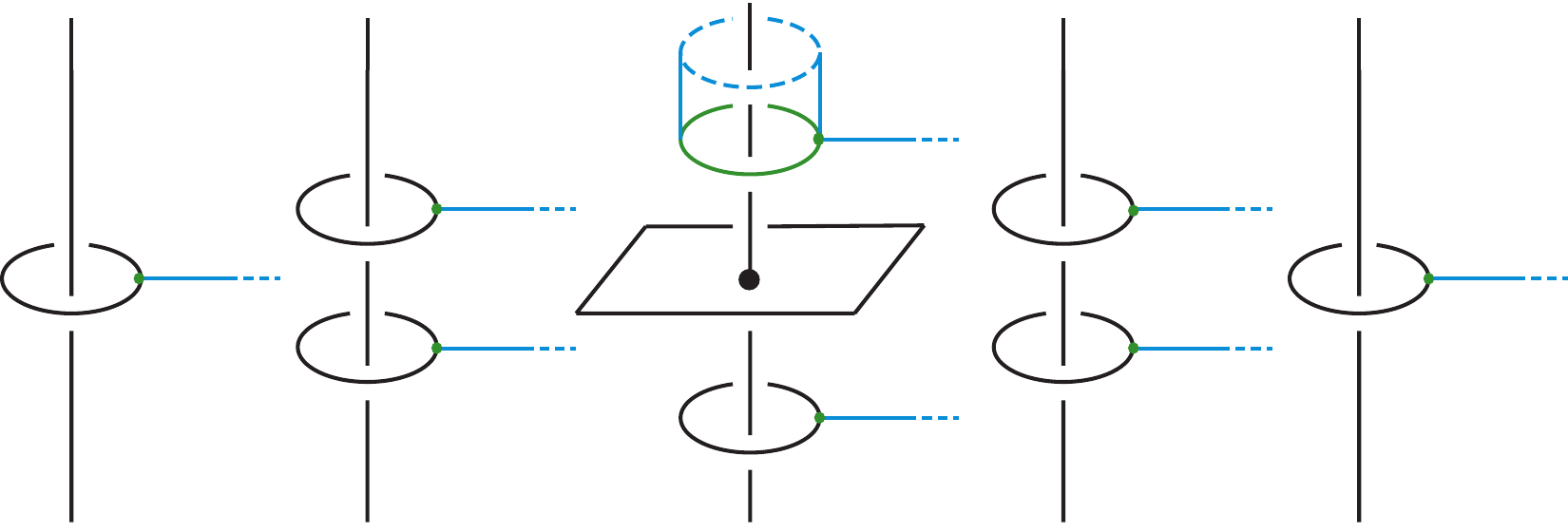}};
\small
\node at (1.53,2.25) {\em N};
\node at (11.9,2.25) {\em N};
\node at (4,2.8) {\em N};
\node at (4,1.7) {\em N};
\node at (9.5,1.7) {\em N};
\node at (9.5,2.8) {\em N};
\node at (7.06,3.36) {\em N};
\node at (7.06,1.16) {\em N};
\end{tikzpicture}
\end{center}
\caption{A Clifford torus $T$ together with the part of the
  annulus~$N$ which lies in a~$D^4$ neighbourhood of the double
  point.}
\label{figure:clifford-torus-plus-n}
\end{figure}

We perform a two step ambient surgery
process.  First use two push-offs of the accessory disc $A^i_1$, which
we denote by $A_{\pm}$, to surger $N$ into a disc  $D = (N \sm \nu A_1^i) \cup A_+ \cup A_-$.  The
boundary of this new disc $D$ is a $(1,1)$ curve on $T$; that is, it
represents the sum of a meridian and a longitude in $H_1(T;\Z) \cong
\Z \oplus \Z$.  In Figure~\ref{figure:surgeries-accessory}, a
schematic of the annulus $N$ is shown, before and after surgery on it
has been performed using $A_+$ and $A_-$ to convert $N$ into the disc
$D$.  We also show the attaching of this apparatus to the Clifford
torus $T$ in Figure~\ref{figure:surgeries-accessory}.  Next, use two
push-offs of $D$ to surger $T$ into an immersed sphere~$S_{2i}$.

\begin{figure}[t]
\begin{center}
\begin{tikzpicture}[scale=.8]
\node[anchor=south west,inner sep=0,scale=.8] at (0,0){\includegraphics[scale=0.5]{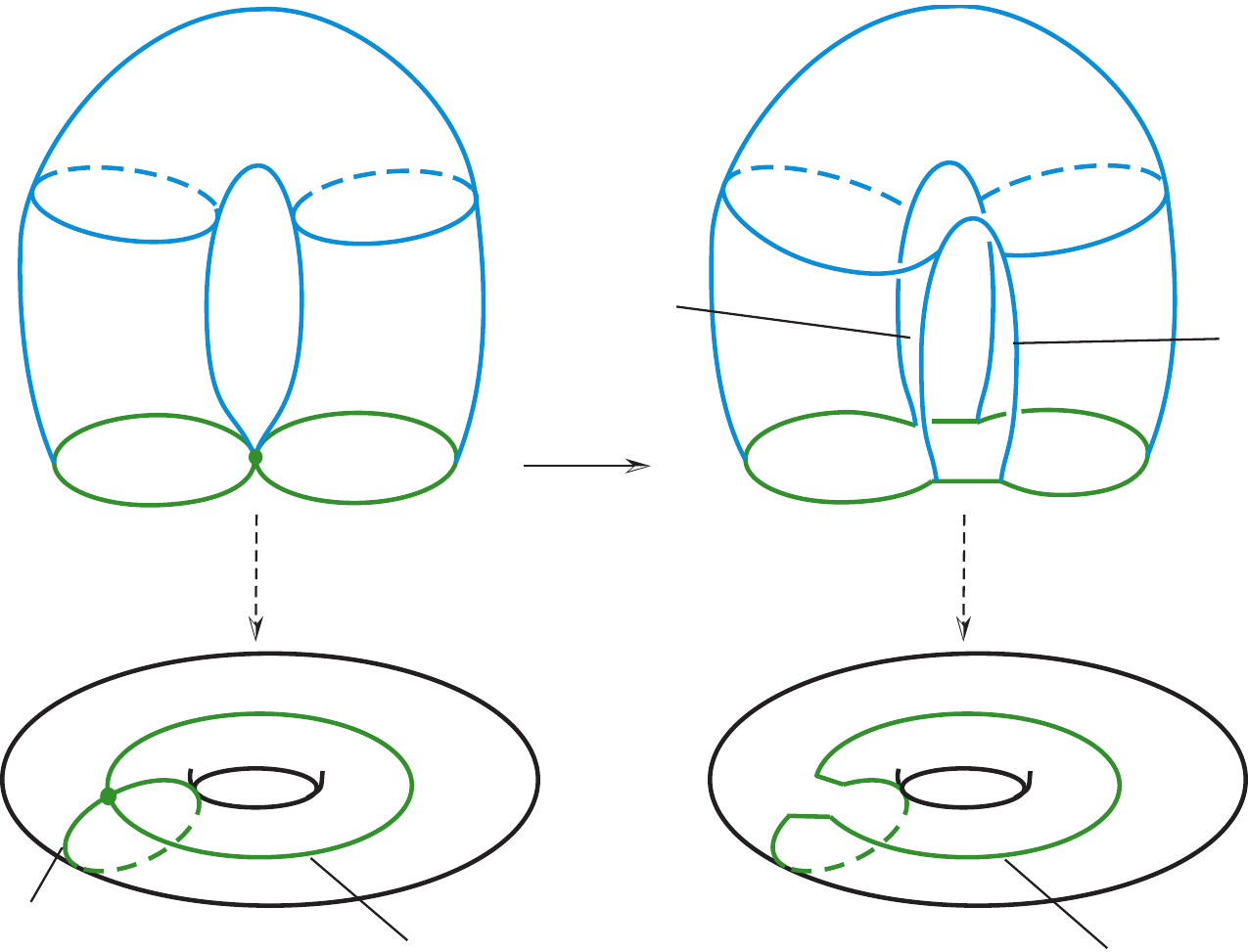}};
\small
\node at (1,8) {$N$};
\node at (2.23,5.66) {$A_1^i$};
\node at (5.62,5.6) {$A_{+}$};
\node at (10.82,5.3)  {$A_-$};
\node at (.3,2.3)  {$T$};
\node at (9.53,8)  {$D$};
\node at (4.3,-.2)  {standard basis curve};
\node at (1,.22)  {standard basis curve};
\node at (10.17,-.23)  {$(1,1)$-curve};
\node at (8.28,7.4)  {$N \sm \nu A_1^i$};
\node at (9,3.35) {attach};
\node at (2.9,3.35)  {attach};
\node at (10.6,2.3)  {$T$};
\node at (5.14,3.7)  {\begin{tabular}{c}surgery\\on $N$\end{tabular}};
\end{tikzpicture}
\end{center}
\caption{Surgery on $N$ using $A_{\pm}$, and the attaching of the resulting disc $D$ to the Clifford torus~$T$.}
\label{figure:surgeries-accessory}
\end{figure}

For Theorem~\ref{theorem:intersection-form-using-accessory-discs} this
describes the construction of our entire set of spheres~$\{S_i\}$.
For Theorem~\ref{theorem:intersection-form-using-whitney-discs}, this
creates half of our spheres: use this construction to produce a sphere
from the Clifford torus of one double point in each pair which is
paired up by a Whitney disc.  Recall that we use the double point with
positive sign and recall that $d=2k$.  So we have created spheres
$S_{2i}$ for $i=1,\dots,k$.  For the other spheres, which will form the
other half of our set of spherical elements of $H_2(W;\Z[\Z])$, we
will use the Whitney discs as below.

Let $p_1$, $p_2$ be two double points of $D_0$ which have opposite
intersection signs and which are paired up by an order one Whitney
disc $D_1^i$.   Let $T_1$ and $T_2$ be the Clifford tori for the double points $p_1$ and
$p_2$ respectively.  Let $\a$ be the Whitney circle: a curve which
goes from $p_1$ to $p_2$ on $D_0$, changes sheets, and then returns to
$p_1$ on the opposite sheet to the sheet it left on.  Write $\a = \a_1
\cup \a_2$, dividing $\a$ into two Whitney arcs by cutting at $p_1$
and $p_2$.

Define two annuli in a similar manner to above.  Take the normal
circle bundle to $\a_i$ and consider its intersection with $\partial
W$.  We obtain $N_i := (\a_i \times S^1) \cap \partial W$.  The boundary of $N_1$ is a standard basis curve on
$T_1$ which we shall call the \emph{meridian} of $T_1$, together with
a standard basis curve of $T_2$ which we shall call the
\emph{meridian} of $T_2$.  The boundary of $N_2$ are other standard
basis curves, which we shall call the \emph{longitudes} of $T_1$
and~$T_2$.  A movie of two Clifford tori, the annuli $N_1$ and $N_2$,
and the Whitney disc $D_1^i$ is shown in
Figure~\ref{figure:whitney-disc-plus-n}.  In this figure, the past and
future pictures are drawn only once, since the situation is symmetric
about the zero time slice, $\text{time}=0$.

\begin{figure}[t]
\begin{center}
\begin{tikzpicture}
\node[anchor=south west,inner sep=0] at (0,0){\includegraphics[scale=.45]{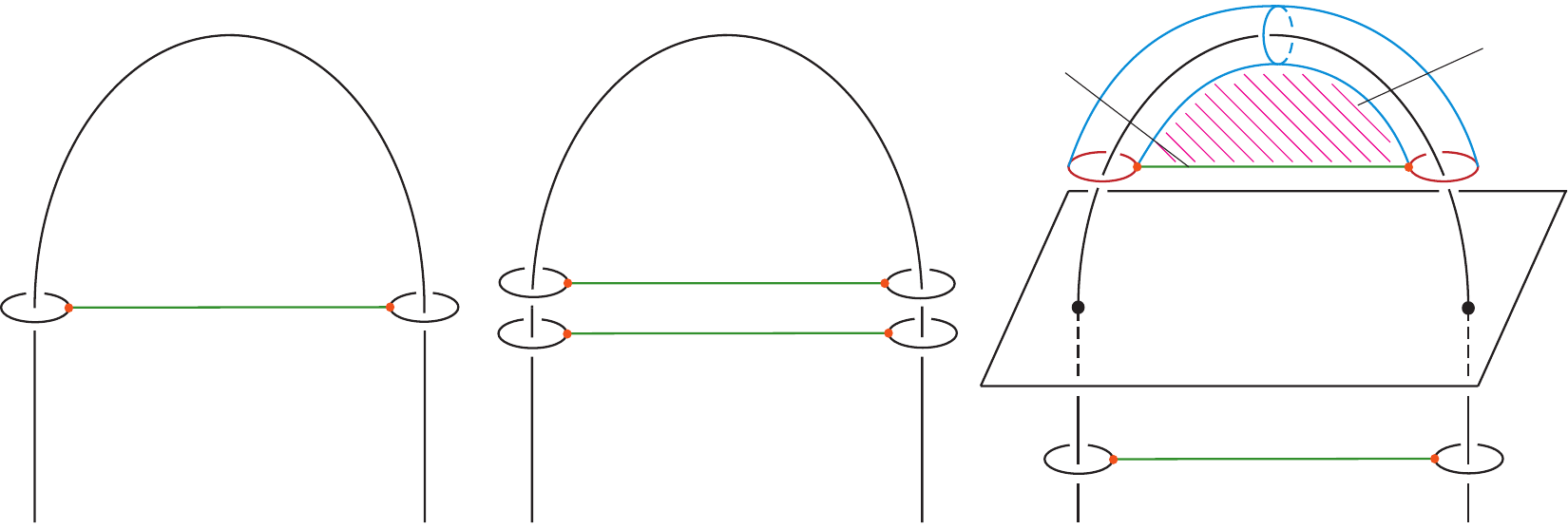}};
\small
\node at (1.8,1.54)  {\smaller $N_1$};
\node at (5.8,1.38)  {\smaller $N_1$};
\node at (5.8,2.21)  {\smaller $N_1$};
\node at (8.29,3.7)  {\smaller $N_1$};
\node at (10.2,.33)  {\smaller $N_1$};
\node at (.78,1.98)  {\smaller $T_1$};
\node at (4.78,2.21)  {\smaller $T_1$};
\node at (8.29,3)  {\smaller $T_1$};
\node at (8.28,.37)  {\smaller $T_1$};
\node at (4.78,1.38)  {\smaller $T_1$};
\node at (7,1.38)  {\smaller $T_2$};
\node at (7,2.21)  {\smaller $T_2$};
\node at (12.1,3)  {\smaller $T_2$};
\node at (12.23,.37)  {\smaller $T_2$};
\node at (2.98,1.98)  {\smaller $T_2$};
\node at (1.8,-.3)  {\smaller $\text{time}= \pm \eps$};
\node at (5.8,-.3)  {\smaller $\text{time}= \pm \frac{\eps}{2}$};
\node at (10.2,-.3)  {\smaller $\text{time}= 0$};
\node at (12.15,3.925){\smaller $D^i_1$};
\end{tikzpicture}
\end{center}
\caption{A picture in $\R^4 \cong \R^3 \times \R$ of a model for two
  intersection points, paired up with a Whitney disc, together with
  their Clifford tori $T_1$ and $T_2$ and the annuli $N_1$ and
  $N_2$. The last $\R$ coordinate is the time.  The future and the
  past are drawn in the same pictures, to avoid repetition.  Note that
  this is only a model.  In reality, since the Whitney disc may not be embedded, all these surfaces may not be
  contained in one contractible open neighbourhood.}
\label{figure:whitney-disc-plus-n}
\end{figure}

Now we have a three step process.  First use two push-offs $N_1^{\pm}$
of $N_1$ to perform surgery on $T_1$ and $T_2$ to join them into one
bigger torus
\[
T_{12} := N_1^- \cup N_1^+ \cup \cl(T_1 \sm (S^1 \times D^1))\cup \cl(T_2 \sm (S^1 \times D^1)).
\]
Next use two push-offs $(D_1^i)_{\pm}$ of the Whitney disc to convert
$N_2$ into a disc
\[
C:= \cl(N_2 \sm (\a_2 \times D^1)) \cup (D_1^i)_+ \cup (D_1^i)_-.
\]
Here we abuse notation and also denote the push-off of $\a_2$ onto
$N_2$ along $D_1^i$ by~$\a_2$.

Recall that the boundary of $N_2$ was a longitude of $T_1$ and a
longitude of $T_2$.  These longitudes have been cut by the surgery
which converted $T_1 \cup T_2$ into $T_{12}$.  They can be joined by a
pair of arcs, $\a_1^+$ in $N_1^+$ and $\a_1^-$ in $N_1^-$, to create a
longer loop which is a longitude of $T_{12}$, and is also the boundary
of $C$.  The final step is to use two push-offs of $C$ to perform
surgery on $T_{12}$ and create the desired sphere $S_{2i-1}$.  The
schematic arrangement of the constituent parts of $S_{2i-1}$ are shown
in Figure~\ref{figure:whitney-surgery}.

\begin{figure}[H]
\begin{center}
\begin{tikzpicture}[scale=.7]
\node[anchor=south west,inner sep=0,scale=.7] at (0,0){\includegraphics[scale=0.6]{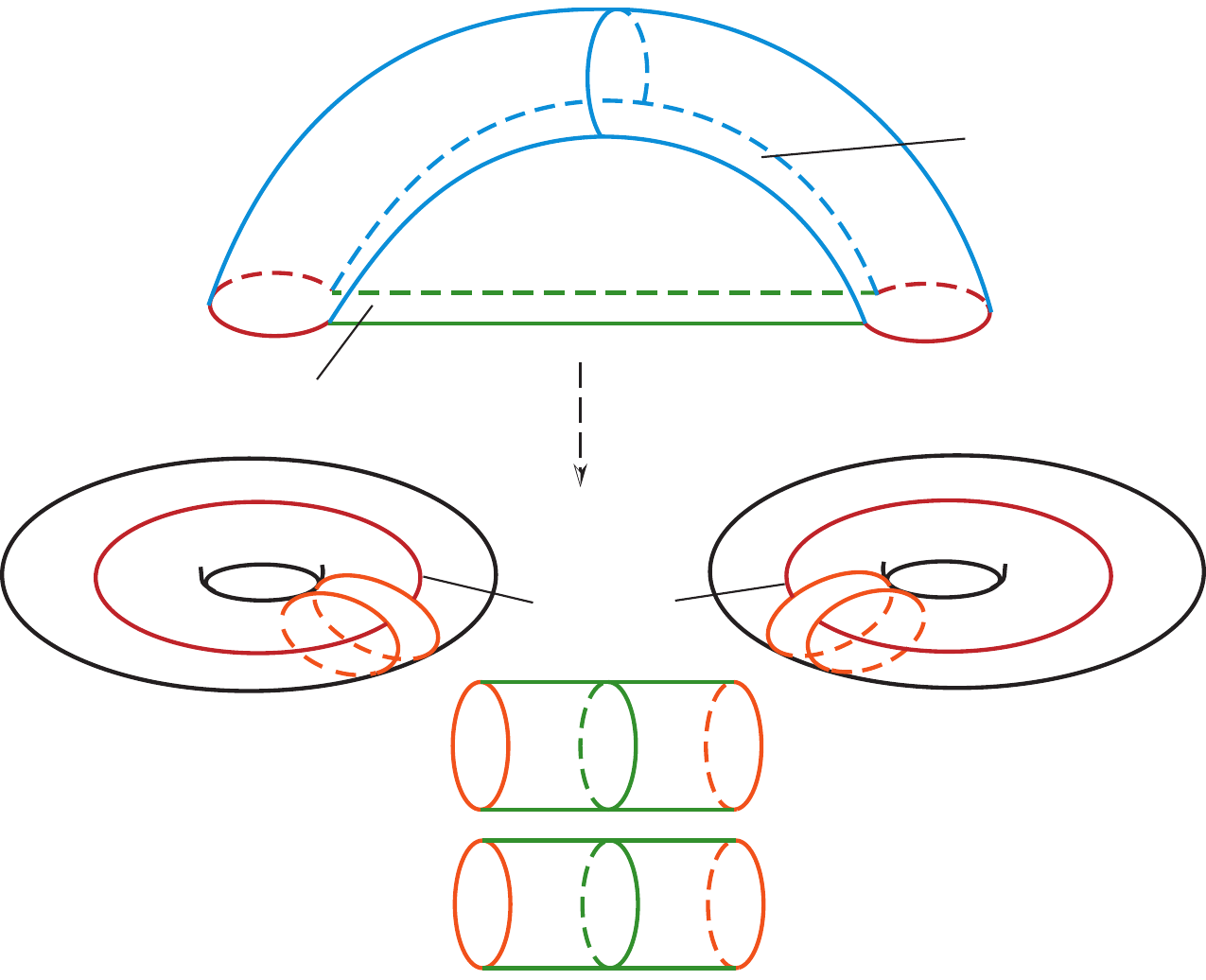}};
\small
\node at (4.5,2.5)  {$N_1^+$};
\node at (4.5,.82)  {$N_1^-$};
\node at (.35,5.4)  {$T_1$};
\node at (12.7,5.4)  {$T_2$};
\node at (9,10) {$C$};
\node at (1.5,9.3)  {$N_2 \sm (D^1 \times D^1)$};
\node at (3.6,6.2)  {$(D_1^i)_+$};
\node at (11.2,9.05)  {$(D_1^i)_-$};
\node at (8,6.1)  {attach $C$ to $T_{12}$};
\node at (6.5,4)  {\begin{tabular}{c}cut \\$\partial N_2$\end{tabular}};
\end{tikzpicture}
\end{center}
\caption{Schematic diagram of the construction of the disc $C$ from
  surgery on $N_2$ using $D_1^i$, and the construction of $T_{12}$
  from the Clifford tori $T_1$ and $T_2$ and two parallel copies
  $N_1^{\pm}$ of the annulus $N_1$.}
\label{figure:whitney-surgery}
\end{figure}

This completes our description of the spheres $S_i$, for
$i=1,\dots,2k=d$.  Recall that we called the submodule in $\pi_2(W)$
they generate~$F$.  Next we will show that $F$ and $\pi_2(W)$ have the
same rank, which is equal to the number of double points of $D_0$.

\begin{lemma}\label{lemma:rank-is-d}
  Both $H_2(W;\Z[\Z])\cong \pi_2(W)$ and its submodule $F$ are free
  $\Z[\Z]$-modules of rank~$d$.
\end{lemma}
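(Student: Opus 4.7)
The plan is to first identify $\pi_2(W) \cong \Z[\Z]^d$ by combining Lemma~\ref{lemma:pi-2-free-module} with an Euler characteristic computation, and then to verify that the $d$ spheres $S_i$ are $\Z[\Z]$-linearly independent by reducing modulo the augmentation ideal $(t-1)$.

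First I would compute $\chi(W) = d$ via Mayer-Vietoris for $D^4 = W \cup \nu D_0$.  The intersection $\partial_+ := W \cap \nu D_0$ is a compact $3$-manifold with torus boundary $\partial \nu K$, so $\chi(\partial_+) = \chi(T^2)/2 = 0$.  Since the singular space $D_0$ is a $2$-disc with $d$ pairs of interior points identified, $\chi(\nu D_0) = \chi(D_0) = 1-d$, and inclusion-exclusion gives $\chi(W) = 1 - (1-d) + 0 = d$.  The Poincar\'e-Lefschetz duality argument used in the proof of Lemma~\ref{lemma:pi-2-free-module} also shows $H_3(W;\Z[\Z]) = H_4(W;\Z[\Z]) = 0$; meanwhile $H_1(W;\Z[\Z]) = 0$ because $\pi_1(W) \cong \Z$, and $H_0(W;\Z[\Z]) \cong \Z$ is $(t-1)$-torsion and hence has $\Z[\Z]$-rank zero.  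Computing the equivariant Euler characteristic from the cellular chain complex of the infinite cyclic cover of $W$ then gives $\operatorname{rank}_{\Z[\Z]} H_2(W;\Z[\Z]) = \chi(W) = d$.  Combined with Lemma~\ref{lemma:pi-2-free-module}, this identifies $\pi_2(W) \cong H_2(W;\Z[\Z]) \cong \Z[\Z]^d$.

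For the rank of $F$: the $S_i$ define a $\Z[\Z]$-linear map $\phi\colon \Z[\Z]^d \to \pi_2(W) \cong \Z[\Z]^d$ with image $F$.  Since $\Z[\Z]$ is a domain, $F$ has rank $d$ if and only if $\det\phi \neq 0 \in \Z[\Z]$, so it suffices to verify that the reduction $\phi \otimes_{\Z[\Z]} \Z \colon \Z^d \to H_2(W;\Z) \cong \Z^d$ at $t=1$ is non-singular.  Ambient surgery on a $2$-cycle along a $2$-disc preserves $\Z$-homology, so the explicit constructions of the $S_i$ from Clifford tori give $[S_{2i}] = [T_{2i}]$ and $[S_{2i-1}] = [T_{2i-1}] \pm [T_{2i}]$ in $H_2(W;\Z)$, where $T_j$ denotes the Clifford torus at the $j$-th double point of $D_0$; the change of basis from $\{[T_j]\}$ to $\{[S_i]\}$ is a triangular integer matrix with $\pm 1$ on the diagonal.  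The Mayer-Vietoris sequence produces a surjection $H_2(\partial_+;\Z) \twoheadrightarrow H_2(W;\Z)$, and a direct analysis of $\partial_+$, for instance by constructing relative $1$-cycles in $(\partial_+, \partial\partial_+)$ dual to each $[T_j]$ via Poincar\'e-Lefschetz duality, will show that the $d$ Clifford tori form a $\Z$-basis of $H_2(W;\Z) \cong \Z^d$.  The hard part is this last step: although the rank count is forced by the Euler characteristic, producing $\{[T_j]\}$ as an explicit basis requires a bit of $3$-manifold analysis, since $\partial_+$ is built from $\partial(D^2 \times D^2) = S^3$ by $d$ self-plumbings at the double points of $D_0$.
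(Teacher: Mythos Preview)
Your Euler characteristic computation of $\operatorname{rank}_{\Z[\Z]} H_2(W;\Z[\Z])$ is correct and is a genuine alternative to the paper's route. The paper instead proves, via the universal coefficient spectral sequence, that the change-of-rings map $H_2(W;\Z[\Z]) \otimes_{\Z[\Z]} \Z \to H_2(W;\Z)$ is an isomorphism, and then computes $H_2(W;\Z)$ directly.

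There is a gap in your argument for $F$: you write the codomain of $\phi \otimes_{\Z[\Z]} \Z$ as $H_2(W;\Z)$, but a priori it is $H_2(W;\Z[\Z]) \otimes_{\Z[\Z]} \Z$, and you have not justified identifying the two. That identification is exactly the UCSS claim the paper proves and that your Euler characteristic argument bypasses. You can circumvent it by noting that if the composite $\Z^d \to H_2(W;\Z[\Z]) \otimes_{\Z[\Z]} \Z \to H_2(W;\Z)$ sending $e_i \mapsto [S_i]$ is injective then so is $\phi \otimes_{\Z[\Z]} \Z$, but you should say so explicitly.

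For showing the Clifford tori form a $\Z$-basis of $H_2(W;\Z)$, the paper's argument is considerably cleaner than the hands-on analysis of $\partial_+$ you propose: the chain
\[
  H_2(W;\Z) \cong H_3(D^4,W;\Z) \cong H_3(\nu D_0,\partial_+;\Z) \cong H^1(\nu D_0,\partial_-;\Z) \cong H_1(D_0;\Z) \cong \Z^d
\]
(long exact sequence, excision, Poincar\'e--Lefschetz duality, homotopy equivalence) exhibits $H_2(W;\Z)$ as free of rank~$d$, generated by the classes dual to the double point loops, namely the Clifford tori. This single computation delivers both the rank and the explicit basis, so once you have it the Euler characteristic detour is unnecessary; conversely, your Euler characteristic argument does not by itself produce generators, so you still owe some version of this step, and the one you sketch via $H_2(\partial_+;\Z)$ is more work.
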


Together with Lemma~\ref{lemma:pi-2-free-module}, this proves
(\ref{item:1-int-form-thm}) of
Theorems~\ref{theorem:intersection-form-using-whitney-discs}
and~\ref{theorem:intersection-form-using-accessory-discs}.

\begin{proof}
  The fact that $\pi_1(W) \cong \Z$ is crucial for this proof.  By
  Lemma~\ref{lemma:pi-2-free-module}, $H_2(W;\Z[\Z])$ is a free
  module, so is isomorphic to $\Z[\Z]^\delta$ for some $\delta$.

  \begin{claim*}
    $H_2(W;\Z[\Z]) \otimes_{\Z[\Z]} \Z \cong H_2(W;\Z)$.
  \end{claim*}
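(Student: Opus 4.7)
The plan is to deduce the claim from a universal coefficient-type spectral sequence for the change of rings $\Z[\Z] \to \Z$, using the vanishing of $H_1(W;\Z[\Z])$ and the fact that $\Z$ has a short free resolution over $\Z[\Z]$. Write $R = \Z[\Z]$. The crucial preliminary observation is that $\pi_1(W) \cong \Z$ forces the universal cover $\widetilde W$ to be simply connected, so $H_1(W;R) = H_1(\widetilde W;\Z) = 0$; I would state this at the outset.

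I would then invoke the first-quadrant homology spectral sequence
\[
E^2_{p,q} = \Tor_p^{R}\bigl(H_q(W;R),\, \Z\bigr) \Longrightarrow H_{p+q}(W;\Z),
\]
with differentials $d_r$ of bidegree $(-r,\, r-1)$. This is formally parallel to the universal coefficient spectral sequence already used in the paper to compute $H^2(W,\partial W;R)$, and arises in the same way from the cellular chain complex of $\widetilde W$. On the antidiagonal $p+q=2$ the three potentially nonzero entries are $E^2_{0,2} = H_2(W;R) \otimes_R \Z$, $E^2_{1,1} = \Tor_1^R(H_1(W;R),\Z)$, and $E^2_{2,0} = \Tor_2^R(H_0(W;R),\Z) = \Tor_2^R(\Z,\Z)$. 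The middle term vanishes immediately from $H_1(W;R) = 0$. For the right-hand term, I would use the explicit free resolution $0 \to R \xrightarrow{t-1} R \to \Z \to 0$, which shows $\Tor_p^R(\Z,\Z) = 0$ for all $p \geq 2$, killing $E^2_{2,0}$.

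To conclude, I need to check that no nontrivial differential of the spectral sequence interacts with $E^r_{0,2}$. Outgoing differentials from $E^r_{0,2}$ land in negative horizontal degree and hence vanish, while the only possibly nonzero incoming differentials originate at $E^2_{2,1}$ or $E^3_{3,0}$. Both of these are zero: $E^2_{2,1} = \Tor_2^R(H_1(W;R),\Z) = 0$ by the vanishing of $H_1(W;R)$, and $E^3_{3,0}$ is a subquotient of $E^2_{3,0} = \Tor_3^R(\Z,\Z)= 0$. Therefore $E^\infty_{0,2} = E^2_{0,2}$ and $E^\infty_{1,1} = E^\infty_{2,0} = 0$, so the filtration on $H_2(W;\Z)$ collapses to the desired isomorphism $H_2(W;\Z) \cong H_2(W;R) \otimes_R \Z$. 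I do not foresee a significant obstacle: once the spectral sequence is in place the argument is formal, and the only delicate point is the bookkeeping of low-degree differentials, which is controlled by the two vanishing inputs $H_1(W;R) = 0$ and $\Tor_{\geq 2}^R(\Z,\Z) = 0$.
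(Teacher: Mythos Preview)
Your argument is correct and is essentially identical to the paper's: both invoke the homology universal coefficient spectral sequence $E^2_{p,q}=\Tor_p^R(H_q(W;R),\Z)\Rightarrow H_{p+q}(W;\Z)$, kill $E^2_{1,1}$ via $H_1(W;R)=0$ and $E^2_{2,0}$ via the length-one resolution of $\Z$, and then observe that no higher differentials touch the surviving term. Your index bookkeeping is in fact a bit more careful than the paper's (which writes $E^2_{2,0}$ where $E^2_{0,2}$ is meant), but the content is the same.
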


   We use the universal
  coefficient spectral sequence for homology~\cite[Theorem~5.6.4]{Weibel94} 
  \[
    E_2^{p,q} = \Tor_p^{R}(H_q(W;\Z[\Z]),\Z) \Longrightarrow
    H_n(W;\Z)
  \]
  to compute $H_2(W;\Z)$ from $H_*(W;\Z[\Z])$.  Here the differential
  $d_r$ has degree $(-r,r-1)$.  The only nontrivial $E^2$ term on the
  line $p+q=2$ is $E^2_{2,0}= H_2(W;\Z[\Z]) \otimes_{\Z[\Z]} \Z$,
  since $H_1(W;\Z[\Z]) = 0$ and $H_0(W;\Z[\Z]) \cong \Z$ admits a
  length one projective $\Z[\Z]$ module resolution
  $\Z[\Z] \xrightarrow{t-1} \Z[\Z] \to \Z$.  The differentials $d^r$
  into $E^r_{2,0}$ ($r\ge 2$) have trivial domains and thus are trivial,
  since $H_1(W;\Z[\Z]) \cong 0$ and since $\Z[\Z]$ has homological
  dimension two.  This completes the proof of the claim.

  Therefore $H_2(W;\Z) \cong \Z[\Z]^\delta \otimes_{\Z[\Z]} \Z \cong \Z^\delta$.
  Now we have a second claim:

  \begin{claim*}
    The second homology is $H_2(W;\Z) \cong \Z^d$, generated by the spheres $S_i$.
  \end{claim*}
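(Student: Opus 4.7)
The plan is to combine three ingredients: (a) the freeness of $H_2(W;\Z[\Z])$ from Lemma~\ref{lemma:pi-2-free-module}; (b) an Euler characteristic calculation that pins down the rank; and (c) the homological interpretation of the ambient surgeries used to construct the spheres $S_i$ from Clifford tori.

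For (a) and (b), the preceding claim already gives $H_2(W;\Z)\cong H_2(W;\Z[\Z])\otimes_{\Z[\Z]}\Z$, so Lemma~\ref{lemma:pi-2-free-module} forces $H_2(W;\Z)\cong \Z^{\delta}$ for some $\delta\ge 0$. I would then compute $\chi(W)$ via Mayer--Vietoris for $D^4=W\cup \nu D_0$. Since $D_0$ (a disc with $d$ interior pairs of points identified) is homotopy equivalent to a wedge of $d$ circles, $\chi(\nu D_0)=1-d$; and since $W\cap \nu D_0=\partial_+\nu D_0$ is a compact 3-manifold with torus boundary $\partial\nu K$, $\chi(\partial_+\nu D_0)=0$, giving $\chi(W)=1-(1-d)+0=d$. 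Together with $H_0(W;\Z)=\Z$, $H_1(W;\Z)=\Z$ (from $\pi_1(W)\cong\Z$), and $H_4(W;\Z)=0$ (from $\partial W\ne\emptyset$), this forces $\delta=d$ once $H_3(W;\Z)=0$ is in hand. For the vanishing of $H_3$, I would use Poincar\'e--Lefschetz duality with local coefficients to write $H_3(W;\Z[\Z])\cong H^1(W,\partial W;\Z[\Z])$; the right-hand side vanishes by the universal coefficient theorem combined with the vanishings $H_0(W,\partial W;\Z[\Z])=0$ and $H_1(W,\partial W;\Z[\Z])=0$ already established inside the proof of Lemma~\ref{lemma:pi-2-free-module}. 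Passing to integer coefficients via the universal coefficient spectral sequence then yields $H_3(W;\Z)=0$, completing the identification $H_2(W;\Z)\cong \Z^d$.

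For (c), each $S_i$ is built from Clifford tori by a sequence of ambient surgeries (along the annuli $N_j$, the accessory discs $A_1^i$, and the Whitney discs $D_1^i$, together with their parallel copies). Ambient surgery on a surface along a disc in the ambient 4-manifold changes the surface by the boundary of a region in that 4-manifold, hence preserves its homology class. Therefore $[S_{2i}]=[T_{2i}]$ and $[S_{2i-1}]=\pm[T_{2i-1}]\pm[T_{2i}]$ in $H_2(W;\Z)$, and the change-of-basis matrix between $\{[S_i]\}$ and $\{[T_i]\}$ is upper triangular with $\pm 1$ on the diagonal, hence unimodular. It therefore suffices to show that the Clifford tori $T_1,\ldots,T_d$ themselves form a basis of $H_2(W;\Z)\cong\Z^d$. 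From the Mayer--Vietoris sequence we obtain a surjection $H_2(\partial_+\nu D_0;\Z)\twoheadrightarrow H_2(W;\Z)$, and the Clifford tori live naturally as classes in $H_2(\partial_+\nu D_0;\Z)$.

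The main obstacle is the last step: verifying that the Clifford torus classes are an \emph{integral} basis and not merely a finite-index sublattice. I would address this by analysing the 3-manifold $\partial_+\nu D_0$ directly. It is obtained from $S^3$ (the boundary of a tubular neighbourhood of an embedded disc in $D^4$) by a local self-plumbing at each of the $d$ double points, and each such self-plumbing introduces exactly one new $H_2$ generator, represented by the corresponding Clifford torus. A corroborating Euler characteristic computation $\chi(\partial_+\nu D_0)=0$, together with the Mayer--Vietoris identification $H_1(\partial_+\nu D_0)\cong \Z^{d+1}$, confirms that $H_2(\partial_+\nu D_0;\Z)$ has rank $d$, matching the rank of $H_2(W;\Z)$ and making the tori an integral basis after passing through the Mayer--Vietoris surjection.
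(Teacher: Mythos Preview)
Your approach is correct in outline but takes a substantially different route from the paper. The paper bypasses your Euler characteristic computation and the analysis of $H_2(\partial_+\nu D_0;\Z)$ entirely. Instead it uses the long exact sequence of the pair $(D^4,W)$, excision, and Lefschetz duality to write
\[
H_2(W;\Z) \cong H_3(D^4,W;\Z) \cong H_3(\nu D_0,\partial_+;\Z) \cong H^1(\nu D_0,\partial_-;\Z) \cong H^1(D_0,\partial D_0;\Z) \cong H_1(D_0;\Z) \cong \Z^d,
\]
with the double point loops furnishing an explicit basis at the last step. The Clifford tori are then identified as the dual basis under this chain of isomorphisms. This delivers the rank and an integral basis simultaneously, and avoids any separate discussion of $H_3(W;\Z)$ or of the plumbing structure of~$\partial_+$.

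Your route is longer but has the virtue of not requiring one to trace a specific class through a duality isomorphism. The weakest link is the final paragraph. Your Euler characteristic check confirms only the \emph{rank} of $H_2(\partial_+;\Z)$; it does not by itself resolve the integral-basis-versus-finite-index question you correctly flag. To close that gap you must actually run the inductive Mayer--Vietoris on each self-plumbing and verify that the new $H_2$ summand is generated by the Clifford torus itself (not a proper multiple). This is routine, but it is precisely the point you identified as the main obstacle, so it should be carried out rather than asserted. Once that is done, the surjection $H_2(\partial_+;\Z)\twoheadrightarrow H_2(W;\Z)$ between free $\Z$-modules of equal rank is an isomorphism and the claim follows. (A minor slip: for an embedded $D_0$ the manifold $\partial_+$ is a solid torus $D^2\times S^1$, not $S^3$; it is $\partial(\nu D_0)\cong S^3$ only before removing~$\partial_-$.)
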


   Note that the claim proves (\ref{item:3-int-form-thm}) of
  Theorems~\ref{theorem:intersection-form-using-whitney-discs}
  and~\ref{theorem:intersection-form-using-accessory-discs}.  Assuming
  the claim it follows from $\Z^\delta \cong \Z^d$ that $\delta=d$.
  It also follows that the spheres generating $F$ define linearly
  independent elements of $\pi_2(W) \cong H_2(W;\Z[\Z])$.  To see
  this, note that each sphere $S_i$ lifts to a nontrivial element of
  $H_2(W;\Z[\Z])$; let $\Z[\Z]^d \to H_2(W;\Z[\Z])\cong \Z[\Z]^d$ be
  the homomorphism sending the $i$th basis to $[S_i]$, and let $P(t)$
  be the associated square matrix over~$\Z[\Z]$.  The claim implies
  that $\det P(1)=\pm1$.  It follows that $\det P(t)\ne 0$, that is,
  $\Z[\Z]^d \to H_2(W;\Z[\Z])$ is injective.  So $F$ has rank~$d$.

  It remains to prove the claim that $H_2(W;\Z)
  \cong \Z^d$.  Recall that $\partial_+=\cl(\partial(\nu D_0)
    \sm \nu K)$.  Let $\partial_- = \nu K$.  We have:
  \begin{align*}
    & H_2(W;\Z) \cong H_3(D^4,W;\Z) && \text{by the long exact sequence for $(D^4,W)$,} \\
    & \qquad\cong H_3(\nu D_0, \partial_+;\Z) && \text{by excision,} \\
    & \qquad\cong H^1(\nu D_0, \partial_-;\Z) && \text{by duality,} \\
    & \qquad\cong H^1(D_0,\partial D_0;\Z) \cong H_1(D_0;\Z) = \Z^d, &&
    \text{generated by the double point loops.}
  \end{align*}
  It follows that the Clifford tori, which are dual to the double
  point loops, form a basis for~$H_2(W;\Z)$.  The Clifford tori, after a
  basis change, are homologous to the spheres $S_i$, since the $S_i$
  are obtained from surgery on (linear combinations of) the Clifford
  tori.  This completes the proof of the claim and therefore of
  Lemma~\ref{lemma:rank-is-d}.
\end{proof}

\begin{remark}
  In the case of accessory spheres only, the final basis change is not
  required.  Also, note that unfortunately we do not know that
  $F = \pi_2(W)$, only that the two are both free modules of the same
  rank and that the generators of $F$ give a basis over $\Z$.
  Therefore, choosing a basis for $\pi_2(W)$ and representing the
  generators of $F$ as vectors, and then making these vectors the
  columns of a matrix, yields a matrix $P(t)$ which augments to be
  unimodular.  This matrix appeared in the proof of linear
  independence above and it will appear in the proofs in
  Sections~\ref{section:proof-theorem-compute-alex-poly-from-whitney-data},
  \ref{section:proof-Arf-invariant-theorem}
  and~\ref{section:blanchfield}.  In the special case that $D_0$
  arises from crossing changes, we will see in
  Lemma~\ref{lemma:spheres-give-basis} that $F=\pi_2(W)$.
\end{remark}

\subsection{Definitions of Whitney and accessory framings}\label{section:framings-definitions}

In this section we recall the precise definition of the Whitney
framing of the boundary of a Whitney disc.  Note that a normal bundle to a surface in
4-dimensional space has 2-dimensional fibre.  An orientation of the
surface and an orientation of the ambient space determines an
orientation of the normal bundle.  Thus a single nonvanishing vector
field in the normal bundle of a surface determines two nonvanishing
vector fields, up to homotopy, and therefore a framing.  The second
vector is chosen so as to be consistent with the orientations.

\begin{definition}[Whitney framing]\label{defn:whitney-framing}
  Suppose that we have two surfaces, or two sheets of the same
  surface, $\Sigma_1$ and $\Sigma_2$, intersecting in two points $p$
  and $q$ of opposite signs.  Let $\gamma_i$ be an arc on $\Sigma_i$
  between $p$ and $q$, such that $\gamma_1 \cup \gamma_2$ bounds a
  Whitney disc $D_1$.  We will describe a framing of
  $\nu_{D_1}|_{\partial D_1}$.  Choose a framing of $\nu_{\gamma_1
    \subset \Sigma_1}$, a nonvanishing vector field in the normal
  bundle of $\gamma_1$ in $\Sigma_1$.  This yields a nonvanishing
  vector field in $\nu_{D_1}|_{\gamma_1}$.  Along $\gamma_2$ we choose
  a vector field in $\nu_{D_1}|_{\gamma_2} \cap \nu_{\Sigma_2}$, which
  agrees at $p$ and $q$ with the vector field along $\gamma_1$ which
  we have already chosen (for this to be possible we need that $p$ and
  $q$ are of opposite signs.)  Note that the intersection
  $\nu_{D_1}|_{\gamma_2} \cap \nu_{\Sigma_2}$ is a 1-dimensional
  bundle.  The resulting framing along $\partial D_1 = \gamma_1 \cup
  \gamma_2$ is the \emph{Whitney framing}.  The transport of the
  Whitney framing to $\partial W$ along $D_1$ is depicted in
  Figure~\ref{figure:whitney-framing}.
\end{definition}

\begin{figure}[H]
\begin{center}
\begin{tikzpicture}[scale=.75]
\node[anchor=south west,inner sep=0,scale=.75] at (0,0)
{\includegraphics[scale=0.3]{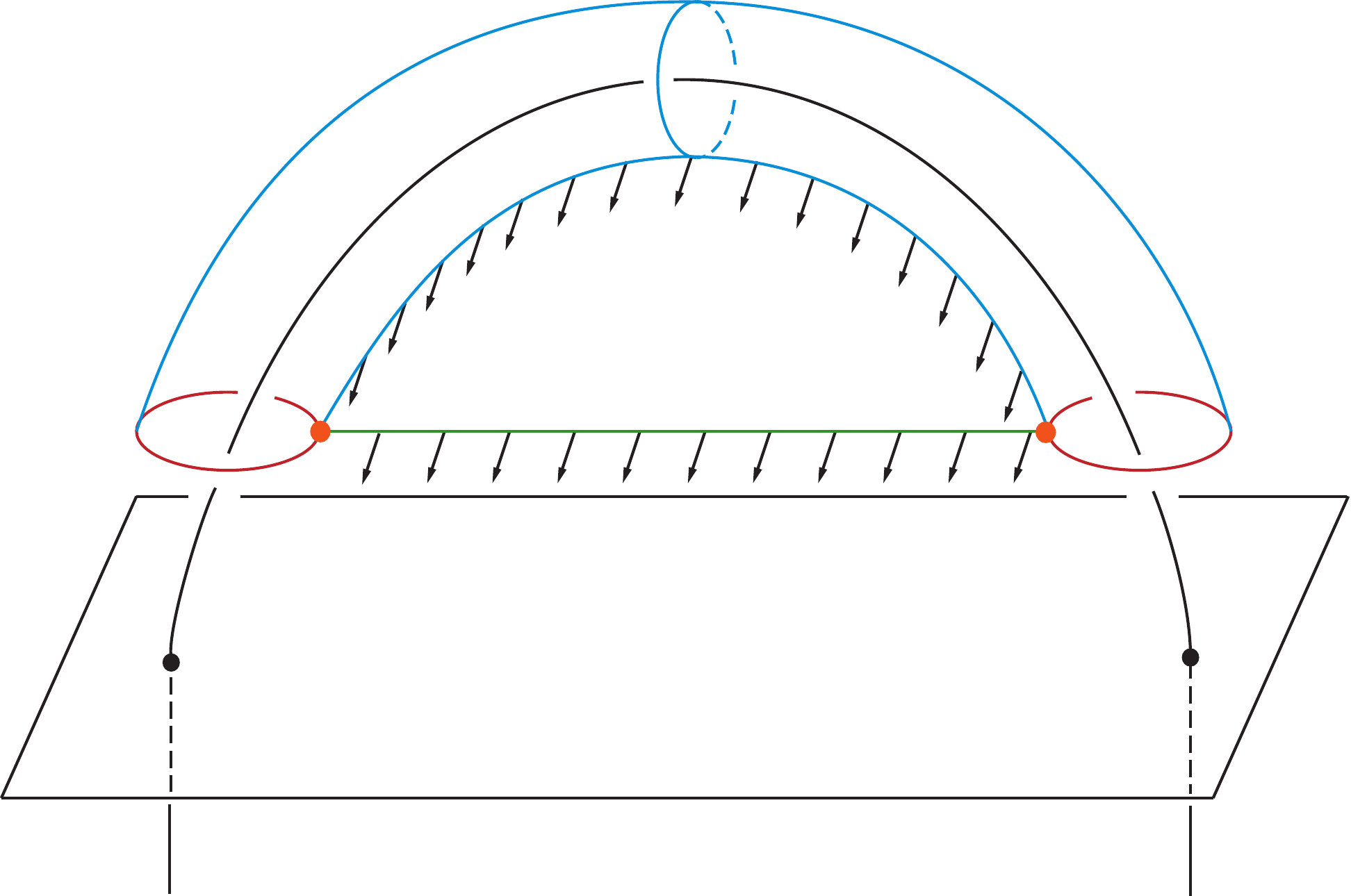}};
\small
\node at (9.7,1.7)  {$\Sigma_1$};
\node at (1.6,.2)  {$\Sigma_2$};
\end{tikzpicture}
\end{center}
\caption{The Whitney framing of the normal bundle of a Whitney disc
  along the boundary. It is tangent to $\Sigma_1$, which appears in
  the picture as a plane, but normal to the $\Sigma_2$, which is the
  surface that appears as a line in the picture. }
    \label{figure:whitney-framing}
\end{figure}

Compare this framing to the disc framing, that is the unique framing
of the normal bundle to~$D_1^i$ restricted to $\partial D_1^i$, in
order to obtain the twisting coefficient $a_i \in \Z$ of~$D_1^i$.
Recall that for the purposes of assigning an integer $a_i$, the disc
framing is considered to be the zero framing.  A Whitney disc is said
to be framed if and only if it has coefficient~$0$; equivalently a
Whitney disc is framed if the Whitney framing and the disc framing
coincide.

As remarked in the introduction, interior twists change the disc
framing by $\pm 2$ relative to the Whitney framing, so we can arrange
that the twisting coefficient is either $1$ or $0$.  Whether or not
this step is performed, the entries of $\lambda$ (and therefore of the
matrix $\Omega$) do not change.

While the Whitney framing defined above is standard (see
\cite[pages~54--8]{Scorpan-2005} for a nice exposition), a framing of
the boundary of an accessory disc does not seem to be standard.
However we will need a detailed understanding of this in order to
compute the matrix of the intersection form of $W$.

\begin{definition}[Accessory framing]\label{defn:accessory-framing}
  Consider the double point loop $\gamma$ of an intersection point $p$
  of $D_0$, which bounds an accessory disc $A_1^i$.  By restricting
  the normal bundle of $D_0$ to $\gamma$, and looking at $W
  \cap \partial (\cl(\nu D_0)|_\gamma)$, we obtain the image $N$ of a
  map into $W$ of an annulus.  Define the curve $\gamma' := A_1^i \cap
  N$.  The boundary $\partial N$ is the union of a longitude and a
  meridian of the Clifford torus $T$ of the double point $p$.  Two
  points $q_1,q_2$ on $\partial N$, one on each component of $\partial
  N \cong S^1 \times S^0$, are identified, where the longitude and
  meridian of the Clifford torus meet.  Thus $\gamma'$ is a simple
  closed curve; in fact $\gamma' = \partial A_1^i$.

  Define the \emph{accessory framing} (or $N$-tangential framing) of
  $A^i_1$ restricted to $\gamma' = \partial A_1^i$ to be a framing of
  the normal bundle of $\gamma'$ by a nonzero vector field in the
  tangent bundle to $N$, except with a slight modification in a
  neighbourhood of $q= q_1=q_2$ that moves the vector field away from
  the tangent bundle~$TN$, as shown in
  Figure~\ref{figure:accessory-framing}; this modification is
  necessary in order for the framing to be well defined at~$q$.

  \begin{figure}[H]
    \begin{center}
      \begin{tikzpicture}[scale=.75]
        \node[anchor=south west,inner sep=0,scale=.75] at (0,0)
        {\includegraphics[scale=0.3]{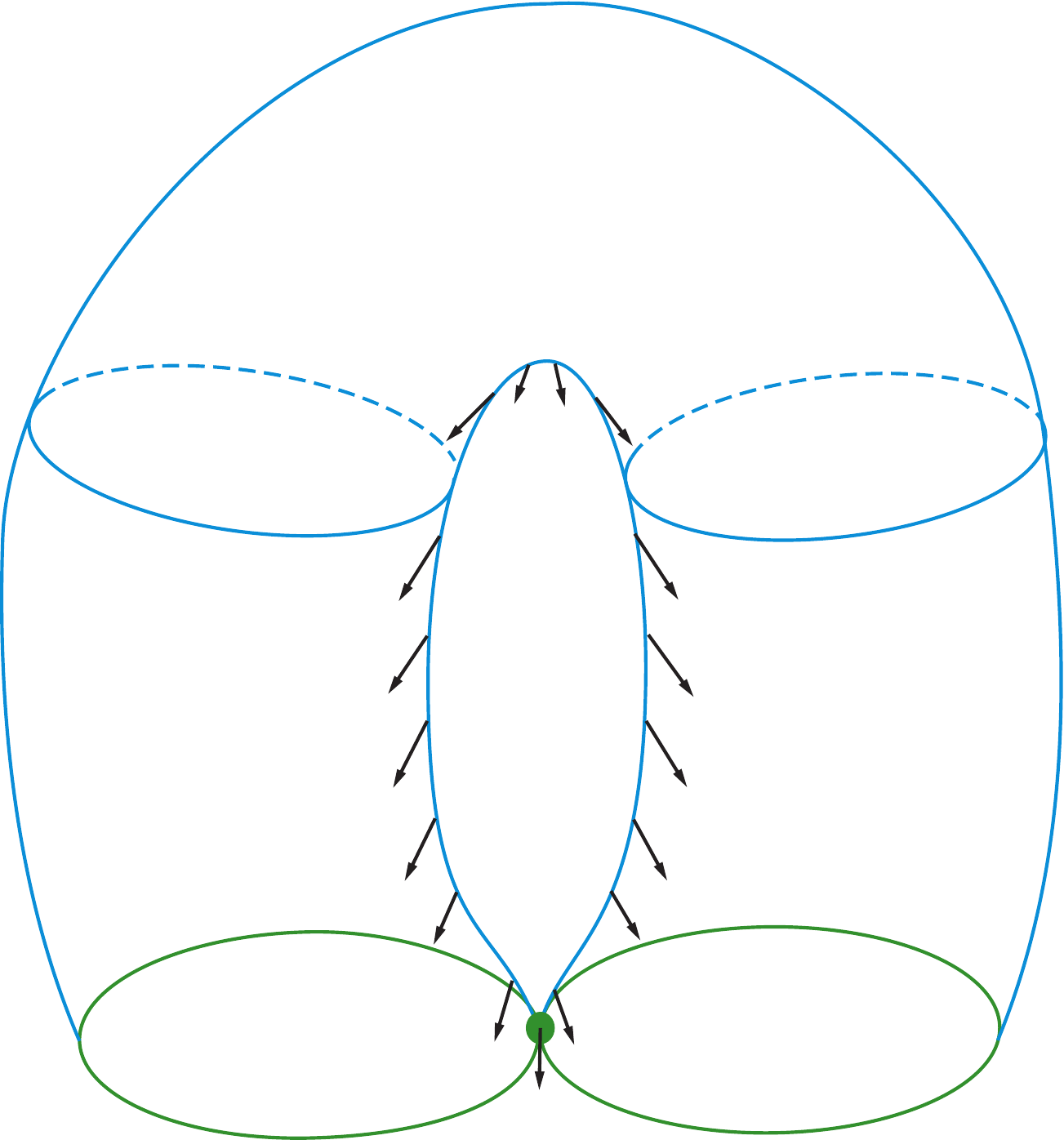}};
      \end{tikzpicture}
    \end{center}
    \caption{The accessory framing.}
    \label{figure:accessory-framing}
  \end{figure}

  More precisely, near an $\R^2\times \R^2$ neighbourhood of $q=0$ in
  which the sheets are $\R^2\times 0$ and $0\times \R^2$, we have
  $T=S^1\times S^1$,
  $N=(S^1\times \R_{\ge 1} \times 0) \cup (0\times\R_{\ge 1}\times
  S^1)$,
  $A_1^i = 0\times\R_{\ge 1} \times \R_{\ge 1} \times 0$, and
  $\gamma' = (0\times 1\times \R_{\ge 1} \times 0) \cup
  (0\times\R_{\ge 1} \times 1\times 0)$.
  The framing is $(1,0,0,0)$ on $0\times 1\times \R_{\ge 2} \times 0$,
  is $(0,0,0,1)$ on $0\times\R_{\ge 2} \times 1\times 0$, and is of
  the form $(\cos \theta,0,0,\sin \theta)$ with $0\le\theta\le\pi/2$
  on the remaining part.  Specifically,
  \[(\cos(\pi t/4),0,0,\sin (\pi t/4)) \text{ on } \begin{cases}
    (0,1,2-t,0) & t \in [0,1] \\
    (0,t,1,0) & t \in [1,2].
  \end{cases}\]
  We remark that this is equal to the framing used by
  Casson \cite{Casson-1986-towers} for the construction of a Casson
  handle.
\end{definition}

Compare the accessory framing to the disc framing of $A_1^i$, in order
to compute the twisting coefficient $b_i \in \Z$ which occurs in the
diagonal terms $\lambda_{2i}$ of $\lambda$. Recall that for the
purposes of assigning an integer $b_i$, the disc framing is considered
to be the zero framing.

\subsection{Equivariant intersections of the spheres $S_i$}
\label{section:intersection-of-spheres-0}

Sections~\ref{section:intersection-of-spheres-0}--\ref{section:int-spheres-4}
describe the intersections amongst the spheres $S_i$.   Together these
sections prove~(\ref{item:2-int-form-thm}),
(\ref{item:4-int-form-thm}) and~(\ref{item:5-int-form-thm}) of
Theorem~\ref{theorem:intersection-form-using-whitney-discs}.  Only the
computations of Sections \ref{section:intersection-of-spheres-0},
\ref{section:framing-conditions-for-surgery}~and
\ref{section:int-spheres-4} are required for the proof of
~(\ref{item:2-int-form-thm}), (\ref{item:4-int-form-thm})
and~(\ref{item:5-int-form-thm}) of
Theorem~\ref{theorem:intersection-form-using-accessory-discs}.

We begin with a lemma translating intersections with a Whitney or
accessory disc into the intersection numbers from the intersections
with a sphere $S_i$.

In the next lemma let $\Sigma$ be a surface in $W$ with a path from a
basepoint of $\Sigma$ to the basepoint of $W$, for which
$\pi_1(\Sigma) \to \pi_1(W)$ is the trivial map.  Recall that $z :=
(1-t)(1-t^{-1})$.

\begin{lemma}\label{lemma:4-parallel-copies}
   For each intersection point of $\Sigma
  \cap D_1^j$ (respectively $\Sigma \cap A_1^j$), there are four
  resulting intersections of $\Sigma \cap S_{2j-1}$ $($respectively
  $\Sigma \cap S_{2j}$$)$.  If the $\Z[\Z]$ intersection number of the
  intersections of $\Sigma$ with $D_1^j$ $($respectively $A_1^j$$)$ is
  $p(t)$, then the $\Z[\Z]$ intersection number with $S_{2j-1}$
  (respectively $S_{2j}$) is $z\cdot p(t)$.
\end{lemma}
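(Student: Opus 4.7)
The plan is to reduce to a local computation at each intersection point of $\Sigma$ with the disc, tracking the four parallel copies of $A_1^j$ (or $D_1^j$) that appear in $S_{2j}$ (or $S_{2j-1}$) along with their induced orientations and basepoint-path data in $\pi_1(W) \cong \Z$.

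First I would establish the count of four intersections by unpacking the construction in Section~\ref{section:construction-of-spheres}. The sphere $S_{2j}$ is built from the Clifford torus by two successive rounds of ambient surgery: first, surgering the annulus $N$ into the disc $D$ using two push-offs $A_+, A_-$ of the accessory disc $A_1^j$, and second, surgering the Clifford torus $T$ into $S_{2j}$ using two push-offs $D_+, D_-$ of $D$. Since each push-off of $D$ inherits the two push-offs of $A_1^j$ contained in $D$, the sphere $S_{2j}$ contains four parallel copies of $A_1^j$. The analogous three-step construction of $S_{2j-1}$ similarly produces four parallel copies of $D_1^j$, as push-offs of $D_1^j$ nested inside the two push-offs of $C$. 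Consequently, each transverse intersection of $\Sigma$ with $A_1^j$ or $D_1^j$ yields four nearby intersections with the corresponding sphere, proving the first statement of the lemma.

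Next I would compute the signed $\Z[\Z]$-intersection contribution of those four intersections. In each round of surgery, the two push-offs must carry opposite induced orientations for the surgered surface to inherit a consistent orientation, producing a sign pattern of the form $(+,-,-,+)$ on the four copies. For the group elements, the basepoint paths in the sphere to the four copies differ pairwise by loops coming from each surgery step. By tracing these paths through the Clifford torus, through $N$, and through the push-offs, the relevant loops can be identified with meridians of sheets of $D_0$, which represent the generator $t \in \pi_1(W)$. The first round of surgery then contributes a factor $(1-t)$ and the second a factor $(1-t^{-1})$, giving a combined factor of $z=(1-t)(1-t^{-1})$ per original intersection. Summing over all intersection points of $\Sigma$ with $A_1^j$ or $D_1^j$ yields the claimed total $z\cdot p(t)$.

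The hardest part will be to rigorously identify each basepoint-path difference with a meridian of $D_0$ and to pin down the signs. I would handle this by working in a local model neighbourhood of each double point, using the explicit coordinates from Definition~\ref{defn:accessory-framing} (and the analogous Whitney-disc model) to parameterise the push-offs and directly compute the loop in $\pi_1(W)$ traced out by switching between push-offs. With these local computations in place, the global intersection number follows by summing local contributions, using the hypothesis that $\pi_1(\Sigma)\to\pi_1(W)$ is trivial to ensure each group element is well-defined.
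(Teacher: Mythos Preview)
Your proposal is correct and follows essentially the same approach as the paper: track the four parallel copies of the disc appearing in the sphere, determine their orientations to get the sign pattern $\zeta\xi$ for $\zeta,\xi\in\{+,-\}$, and identify the basepoint-path differences between adjacent copies with meridians of $D_0$ to obtain the contributions $+1,-t^{-1},+1,-t$ summing to~$z$. The paper carries this out explicitly for the Whitney disc case (with a schematic figure rather than the coordinate model you suggest) and then remarks that the accessory case is identical with the obvious substitutions; your plan to use the local model from Definition~\ref{defn:accessory-framing} is a bit more machinery than strictly necessary but would certainly work.
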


\begin{proof}
  We discuss the case of Whitney discs and odd indexed spheres first.
  Assume that there is a single intersection point in $\Sigma \cap
  D_1^j$ and it has $\Z[\Z]$ intersection number~$+1$.

  Consider the four copies of the Whitney disc $D_1^j$ which occur in
  $S_{2j-1}$.  First we use two
  copies of $D_1^j$ to surger an annulus $N_2$ into a disc $C$.  These
  copies of $D_1^j$ are called $(D_1^j)_{\pm}$.  Label so that going
  from $(D_1^j)_+$ to $(D_1^j)_-$ along $N_2$ involves traversing a
  meridian of $D_0$ in the positive sense.

  Then we use two copies $C_{\pm}$ of $C$ to surger the torus
  $T_{12}$.  Label so that going from $C_+$ to $C_-$ along $T_{12}$
  involves traversing a meridian of $D_0$ in the negative sense.
  Creating $C_+$ and $C_-$ requires two copies of each of
  $(D_1^j)_{\pm}$, which we call $(D_1^j)_{\pm \pm}$.   Observe that $C_+$ uses $(D_1^j)_{++}$ and
  $(D_1^j)_{-+}$, while $C_-$ uses $(D_1^j)_{+-}$ and $(D_1^j)_{--}$.
  If $\Sigma$ intersects $D_1^j$ in a point then $\Sigma$ intersects
  each of the $(D_1^j)_{\pm \pm}$ in a point.

  In order for $S_{2j-1}$ to be oriented, we need to take the opposite
  orientations on $(D_1^j)_{+-}$ and~$(D_1^j)_{-+}$.  Choose the
  orientation of $S_{2j-1}$ to be such that the intersection signs for
  $\Sigma \cap (D_1^j)_{\zeta\xi}$ is equal to $\zeta\cdot\xi$ for
  $\zeta,\xi \in \{+,-\}$.

  We are given a choice of path from the basepoint of $W$ to the
  basepoint of $D_1^j$. Use the same path, perturbed slightly, with
  the basepoint of $S_{2i-1}$ located on $D^j_{++}$.  With respect to
  this choice of basepoint, the contributions from the intersections
  of $\Sigma$ with $(D_1^j)_{++}, (D_1^j)_{+-}, (D_1^j)_{--},
  (D_1^j)_{-+}$ are $+1$, $-t^{-1}$, $+1$, $-t$ respectively.  The sum
  is $2-t-t^{-1} = (1-t)(1-t^{-1})=z$.  See
  Figure~\ref{figure:equivariant-intersection}.

\begin{figure}[t]
\begin{center}
\begin{tikzpicture}[scale=.75]
\node[anchor=south west,inner sep=0,scale=.75] at (0,0)
{\includegraphics[scale=0.3]{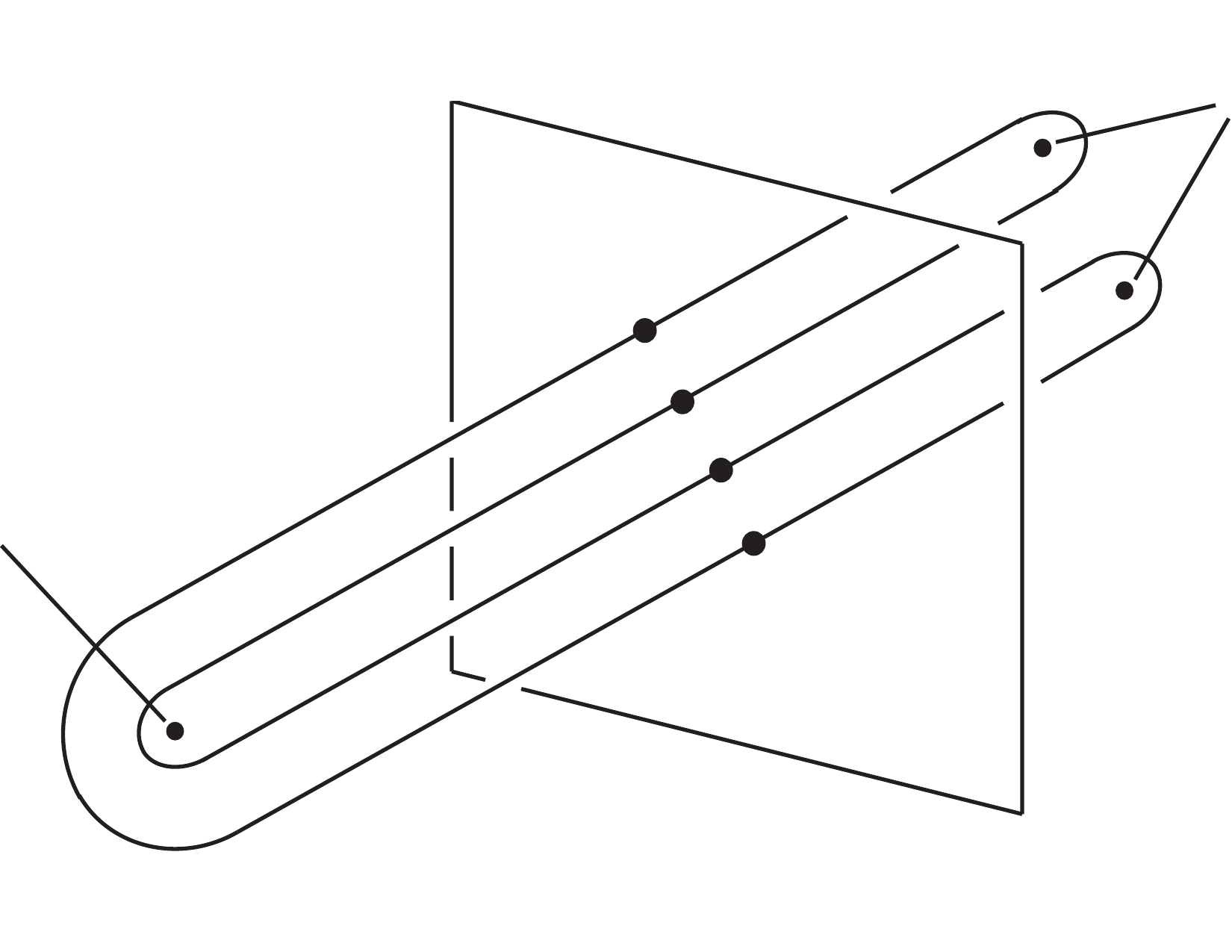}};
\small
\node at (7.2,2)  {$\Sigma$};
\node at (8.55,5.85)  {$D_0$};
\node at (-.27,2.85)  {$D_0$};
\draw[<-]  (1.3,2.6)-- ++(-.7,.7) node[above] {$(D_1^j)_{++}$};
\draw[<-]  (2.7,2.8)-- ++(-1,1) node[above] {$(D_1^j)_{+-}$};
\draw[<-]  (2.8,2)-- ++(0,-1.3) node[below] {$(D_1^j)_{--}$};
\draw[<-]  (1.9,.9)-- ++(0,-.7) node[below] {$(D_1^j)_{-+}$};
%\node at (1.95,2.5)  {$(D_1^j)_{+-}$};
%\node at (2.22,2.03)  {$(D_1^j)_{--}$};
%\node at (2.5,1.6)  {$(D_1^j)_{-+}$};
\end{tikzpicture}
\end{center}
\caption{ A schematic diagram of four intersection points of a sphere
  $S_{2i-1}$ (or $S_{2i}$) with a surface $\Sigma$ arising from a
  single intersection point of $D_{i}^1$ (or $A^1_i$) with $\Sigma$.}
    \label{figure:equivariant-intersection}
\end{figure}

Add the contributions from multiple intersection points in $D_1^j \cap
\Sigma$ to obtain the desired result.  If the initial $\Z[\Z]$
intersection number of a point of intersection between $D^j_1$ and
$\Sigma$ is $\pm t^{\ell}$, then the contribution to the intersection
number of $\Sigma$ with $S_{2j-1}$ is~$\pm zt^{\ell}$.

The result for the intersection number of the sphere $S_{2j}$ with a
surface $\Sigma$ in terms of the intersection number of $\Sigma$ with
$A_1^j$ is proved in the same way, with $A_1^j$ replacing $D_1^j$, $N$
replacing $N_2$, $D$ replacing $C$, and with $T$ replacing~$T_{12}$.
\end{proof}

\subsection{Intersection of $S_{i}$ with $S_{j}$ for $i \neq j$ and $\{i,j\} \neq \{2i-1,2i\}$}\label{section:int-spheres-1}

First we consider the intersections between the spheres $S_{2i-1}$ and
$S_{2j-1}$ for $i \neq j$.  The $\Z[\Z]$ intersections between the
spheres $S_{2i-1}$ and $S_{2j-1}$ for $i \neq j$ arise directly from
intersections between the order one Whitney discs $D_1^i$ and~$D_1^j$.

We investigate the contribution of a single intersection point between
$D_1^i$ and $D_1^j$ with associated element~$\pm t^{\ell}$.  Since
$S_{2i-1}$ contains 4 parallel copies of $D_1^i$ and $S_{2j-1}$
contains $4$ parallel copies of $D_1^j$, there are 16 intersection
points in $S_{2i-1} \cap S_{2j-1}$ arising from the single
intersection point in $D_1^i \cap D_1^j$.

We apply Lemma~\ref{lemma:4-parallel-copies} five times, once with
$\Sigma=D_1^i$ and $D_1^j$ as the intersecting disc, and then once
with $\Sigma$ as each of the four parallel copies of $D_1^j$ in
$S_{2j-1}$, and $D_1^i$ the intersecting disc.  The resulting
$\Z[\Z]$-intersection number is therefore~$\pm z^2 t^{\ell}$.

The intersections of $S_{2i-1}$ with $S_{2j}$ for $i \neq j$ and the
intersection of $S_{2i}$ with $S_{2j}$ for $i \neq j$ are computed in
the same way, except that a sphere with even index $S_{2i}$ contains four parallel copies of an accessory disc instead of a Whitney disc.

\subsection{Intersection of $S_{2i-1}$ and $S_{2i}$}\label{section:int-spheres-2}

During the construction of $S_{2i-1}$ and $S_{2i}$ we must be careful to make sure that the intersections are transverse.  There is one Clifford torus associated to one of the double points paired up by $D_1^i$, say $T_2$, a parallel copy of which is also used as the Clifford torus $T$ to surger using $A_1^i$ in the construction of $S_{2i}$.   We may assume that $T_2$ and $T$ are associated to a self-intersection point of $D_0$ of positive sign.  We use a slightly bigger Clifford torus for $T_2$ than for $T$.   As a result $T$ is disjoint from $S_{2i-1}$ but $T_2$ intersects $A_1^i$ in a single point.  Apply Lemma~\ref{lemma:4-parallel-copies} to obtain a contribution of $z$ to the off-diagonal entries of each $2 \times 2$ block of the matrix $X$ from Theorem~\ref{theorem:intersection-form-using-whitney-discs}.

\subsection{Framing conditions for surgery}\label{section:framing-conditions-for-surgery}

To understand the self intersection terms, first we need to give a
description of the framing conditions that must hold in order for
surgery to be performed and the normal bundle of the outcome to again
be framed.  One can still perform surgery without the framing
condition, but then it becomes tricky to verify that one is keeping
track of intersection numbers and framing conditions correctly.

Recall that a framing of a surface in a 4-manifold means a framing of
its 2-dimensional normal bundle, and a framing is specified by a
single nonvanishing vector field in the normal bundle.  A second
nonvanishing vector field can then be found using the orientation of
the normal bundle, which is itself inherited from the orientation of
the surface and the orientation of the ambient 4-manifold.

Let $V$ be a 4-manifold, let $T \subset V$ be an embedded torus with
trivial normal bundle, with an essential, simple closed curve $\gamma
\subset T$, and let $D\looparrowright V$ be an immersed disc which we
want to use to perform surgery on $T$, so that $\partial D = \gamma$.

  There is a unique framing of $D$ in $W$, that is,
trivialisation of the normal bundle~$\nu_D$,
which we call the \emph{disc framing}.  In addition, suppose we have
the following data:
\begin{itemize}
\item A framing of $T$ in $W$, which we call a \emph{surgery framing}.
\item A framing $f_{\gamma\subset T}$ of $\gamma\subset T$, that is, a trivialisation of the
  normal bundle~$\nu_{\gamma \subset T}$.
\item A framing $f_{\gamma\subset D}$ of $\gamma\subset D$.
\end{itemize}
The various vector bundles on $\gamma$ are shown in
Figure~\ref{figure:framing-conditions}.  Note that the framings of
$\gamma \subset T$ and $\gamma \subset D$ are uniquely determined up
to negation, while that of $T\subset W$ is not.

\begin{figure}[H]
\begin{center}
\begin{tikzpicture}
\node[anchor=south west,inner sep=0] at (0,0){\includegraphics[scale=0.5]{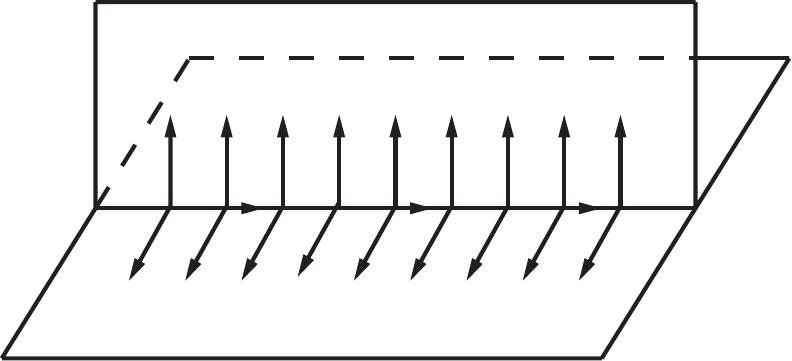}};
    \node at (5.6, 3.25)  {\larger $D$};
    \node at (3.55,2.25)  {\larger $\nu_{\gamma \subset D}$};
    \node at (2.8,0.45)  {$\nu_{\gamma \subset T}$};
    \node at (6.2,1.3)  {$\gamma$};
    \node at (0.3,0.8)  {\larger $T$};
\end{tikzpicture}
\end{center}
\caption{A 3-dimensional slice of a neighbourhood of a point of
  $\gamma$, with the surfaces $T$ and $D$ shown, together with
  trivialisations of the tangent bundle $T\gamma$ and of the normal
  bundles $\nu_{\gamma \subset D}$ and $\nu_{\gamma \subset T}$. The
  vector field $\mbf{w}$ is in the 4th dimension and so is not visible
  in the picture.}
    \label{figure:framing-conditions}
\end{figure}

In order for the surgery to yield a \emph{framed}
2-sphere, we require the following: there exists a vector field
$\mbf{w}$ on $\gamma$ such that
\begin{enumerate}
\item[(F1)] $(f_{\gamma\subset T},\mbf{w})$ is equivalent to the disc
  framing on~$\gamma$.
\item[(F2)] $(f_{\gamma\subset D},\mbf{w})$ is equivalent to the
  surgery framing on~$\gamma$.
\end{enumerate}
In order for the conditions (F1) and (F2) to hold we might have to
make some modifications of the original data.  First we may need to
boundary twist $D$ around $\gamma$, introducing one intersection in $D
\cap T$ for each twist, until there exists a $\mbf{w}$
satisfying~(F1).  Since it is constrained to a single dimension, up to
homotopy $\mbf{w}$ is determined up to sign, and the sign is
determined by the other choices of framing.  Since $\gamma$ is
essential, we are then free to change the surgery framing of $T$ along
$\gamma$, until (F2) holds.  In the sequel this will always be done
without further comment.

We may then use the surgery framing to take two parallel copies of $D$
and construct a framed sphere $S$.  The framing on $S$ is obtained by
taking the framing on $\nu D$ on one copy of $D$, its negative on the
other copy of $D$, the framing of $\nu T$ on $T \sm (\gamma \times
D^1)$, and then smoothing the corners by rotating between the two
vector fields in a neighbourhood of $\gamma \times \{\pm 1\}$.  The
rotation occurs in the 2-dimensional subbundle of $TV|_{\gamma}$ which
is orthogonal to $T\gamma$ and $\mbf{w}$.

\subsection{Self intersection of $S_{2i-1}$}\label{section:int-spheres-3}

First, we note that each self-intersection of the disc $D^i_1$ gives
rise to 16 self intersection points of $S_{2i-1}$, which means that we
should count 32 intersection points between $S_{2i-1}$ and a push-off.

Given a self-intersection point $p$ of $D_1^i$ with double point loop
$t^{\ell}$ and sign $\pm$, the intersection number between $D_1^i$ and
a parallel push-off is $\pm(t^\ell +t^{-\ell})$.  We can only define
the double point loop up to the indeterminacy $t^{\ell}=t^{-\ell}$,
since we have no canonical ordering of sheets.  Of course $t^\ell
+t^{-\ell}$ is independent of the choice here.  Now apply the argument
of Section~\ref{section:int-spheres-1} to yield a coefficient of
$z^2$, noting that $z=\ol{z}$.  This accounts for the diagonal terms
of $z(zY + \ol{zY}^T)$.  There are indeed 32 terms for each $\pm
t^{\ell}$ summand of $Y$.

The potential twisting of the Whitney discs gives the crucial extra
terms.  We want the sphere $S_{2i-1}$ to be framed, in order to be
able to compute the self intersection number
$\lambda(S_{2i-1},S_{2i-1})$ by counting intersection points between
$S_{2i-1}$ and a parallel push-off.  The twisting occurs in the first
step, during the construction of $C$ from $N_2$ and $(D_1^i)_{\pm}$.

Recall that we denote $\a_1 = N_1 \cap \partial D_1^i$ and $\a_2 = N_2
\cap \partial D_1^i$.  The notation $\a_1,\a_2$ was also used for the
Whitney arcs which lie on $D_0$, so we make a slight abuse to use the
same notation for their push-offs onto $N_1$, $N_2$ respectively.

Align the disc framing of $D_1^i$ with the Whitney framing along
$\a_1$.  Note that, within the homotopy class, we are free to adjust
any framing on an interval.  Then look at the disc framing of $D_1^i$
restricted to $\a_2$.  The difference between this framing and Whitney
framing, which is also the surgery framing along $N_2$, is the
twisting coefficient $a_i$.  Introduce $a_i$ boundary twists along
$\a_2$.  Twisting is described
in~\cite[Section~1.3]{Freedman-Quinn:1990-1}.  (With respect to the
whole of the Whitney disc, as originally pairing intersections of
$D_0$, this is an interior twist. However with respect to the sub-disc
whose boundary is $(N_1 \cap \partial D_1^i) \cup (N_2 \cap \partial
D_1^i)$, this is a boundary twist.  Only the part of the Whitney disc
that we use for surgery is relevant.)  The boundary twist changes the
Whitney disc, and therefore the disc framing, so that it now coincides
with the surgery framing along $N_2$.  Strictly speaking, for these
boundary twists, we should push $N_2$ slightly off $\partial W$.

The Whitney framing along $N_1$ differs from the surgery framing on
$T_{12}$ by a fixed rotation.  Both are normal to $D_1^i$ along $N_1
\cap \partial D_1^i$.  Therefore in a neighbourhood of $\a_1$ we can
arrange the disc framing by a homotopy so that it lies in $\nu_{\a_1
  \subset N_1}$.

The disc framing of $C$ is constructed from the disc framing of
$(D_1^i)_{+}$, the negative of the disc framing of $(D_1^i)_-$ and the
normal framing to $N_2$.  This latter is also the disc framing of $N_2
\sm (\a_2 \times D^1)$.  The fact that we obtain the disc framing of
$C$ is guaranteed by the boundary twists above.  For the second
surgery, converting $T_{12}$ to $S_{2i-1}$ using $C_{\pm}$, the
framings already coincide as required by
Section~\ref{section:framing-conditions-for-surgery}.  Therefore no
more boundary twisting is required.

Now we consider the contribution of a boundary twist as above to the
self intersection number.  Each boundary twist produces a single
intersection point between $N_2$ and $D_1^i$.  It therefore produces
two self-intersection points of $C$.

Two copies of $N_2$ will be in the final sphere $S_{2i-1}$.  To
compute the self intersection number $\lambda(S_{2i-1}, S_{2i-1})$,
first we compute the Wall self intersection
$\mu(S_{2i-1})$~\cite[Chapter~5]{Wall-Ranicki:1999-1}, and observe
that $\lambda(S_{2i-1}, S_{2i-1}) = \mu(S_{2i-1}) +
\ol{\mu(S_{2i-1})}$.  This works for two reasons.  First, the sphere
$S_{2i-1}$ is framed, as we just went to great lengths above to
ensure.  Thus there is no extra term from the Euler characteristic of
the normal bundle~\cite[Theorem~5.2~(iii)]{Wall-Ranicki:1999-1}.
Second, although the self-intersection $\mu(S_{2i-1})$ is only
well-defined up to the indeterminacy $a= \ol{a}$, the sum
$\mu(S_{2i-1}) + \ol{\mu(S_{2i-1})}$ is well-defined and determines a
unique element of $\Z[\Z]$.

Label the two copies of $N_2$ which occur in $C_{\pm}$ by
$(N_2)_{\pm}$.  The intersection numbers of these with $D_1^j$ are $1$
and $-t$ respectively, since the two intersections differ by a
meridian of $D_0$.  By Lemma~\ref{lemma:4-parallel-copies}, the
contribution to the self intersection number from each boundary twist
is therefore $(1-t)z$.  Therefore the contribution to
$\lambda(S_{2i-1}, S_{2i-1})$ is
\[
 (1-t)z +
\ol{(1-t)z} = (1-t+1-t^{-1})z = z^2.
\]
All together the boundary twists therefore
contribute $a_i z^2$ to $\lambda(S_{2i-1}, S_{2i-1})$.

\subsection{Self intersection of $S_{2i}$}\label{section:int-spheres-4}

There are three types of contributions to the self intersection of
$S_{2i}$.  First, a self-intersection of the disc $A_1^i$ with
$\Z[\Z]$-intersection number $p(t)$ contributes $z(zp(t) +
\ol{zp(t)})$, by the analogous argument as for the spheres $S_{2i-1}$
in Section~\ref{section:int-spheres-3}.

The twisting $b_i$ of the accessory framing
(Figure~\ref{figure:accessory-framing}) with respect to the disc
framing contributes $b_iz^2$, by a similar argument to that in
Section~\ref{section:int-spheres-3}.  We give the outline.  Again we
need that the disc framing of $D$ is constructed from the disc
framings of $N \sm (D^1 \times D^1)$ and $A_+$ together with the
negative of the disc framing of $A_-$.  To achieve this perform $b_i$
boundary twists of $A_1^i$ around $A_1^i \cap N$.  These contribute
$b_iz^2$ to $\lambda(S_{2i},S_{2i})$ as claimed.

In the construction of the spheres $S_{2i-1}$, the first set of
boundary twists was sufficient: after this the second surgery, of
$T_{12}$ into a sphere, was automatically correctly framed.  However,
for the spheres $S_{2i}$ constructed from the accessory discs, that we
consider in this section, this is not the case.

\begin{claim*}
  The surgery framing of the $(1,1)$ curve on the Clifford torus $T$
  is $+1$ with respect to the disc framing on $D$.
\end{claim*}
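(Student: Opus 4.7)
The strategy is to compute the framing discrepancy via a local analysis at the single corner point $q$ on $\gamma := \partial D$ where the meridian and longitude of the Clifford torus $T$ meet; away from $q$, the surgery framing of $\nu_T$ and the disc framing of $\nu_D$ can be matched through a consistent choice of the auxiliary field $\mbf{w}$ of Section~\ref{section:framing-conditions-for-surgery}, and all of the relative twist between the two framings is concentrated at $q$.

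I would set up the computation in the $\R^2 \times \R^2$ neighbourhood of the double point from Definition~\ref{defn:accessory-framing}, with the two sheets of $D_0$ being the coordinate planes $\{y_1 = y_2 = 0\}$ and $\{x_1 = x_2 = 0\}$. Then $T = \{x_1^2 + x_2^2 = 1 = y_1^2 + y_2^2\}$, the standard basis curves $\mu$ and $\ell$ meet at $q = (1,0,1,0)$, and $\gamma$ is the smoothing of $\mu \cup \ell$ at $q$ given explicitly there. Since $D = (N \setminus \nu A_1^i) \cup A_+ \cup A_-$ and $\partial D$ is disjoint from the interior of the attached caps $A_\pm$, along $\gamma$ the disc $D$ locally agrees with $N$, so the disc framing of $\nu_D$ along $\gamma$ coincides with the framing of $\nu_N$ at the corresponding points. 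At a point of $\gamma \cap \mu$ away from $q$, $T_xN$ is spanned by the meridional tangent and the $y$-radial direction (the push-off of the $\alpha$-normal), while $T_xT$ is spanned by the meridional tangent and the longitudinal direction in $T$. Hence $\nu_N$ and $\nu_T$ share the $x$-radial direction; choosing $\mbf{w}$ in this direction gives a simultaneous realisation of the disc framing and surgery framing on $\gamma \setminus \{q\}$ in the sense of Section~\ref{section:framing-conditions-for-surgery}, contributing zero to the relative twist. The analogous statement holds along $\gamma \cap \ell$ after exchanging the roles of $x$ and~$y$.

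The main computation occurs at $q$, and this is the main obstacle. Using the explicit smoothing $(\cos(\pi t/4), 0, 0, \sin(\pi t/4))$ from Definition~\ref{defn:accessory-framing}, as one traverses $\gamma$ through a neighbourhood of $q$ the tangent plane $T_xN$ rotates through $\pi/2$, while $T_xT$ remains constant in this local model. Tracking how the framings of $\nu_D$ (dragged by $TN$ through the smoothing) and $\nu_T$ differ across this corner, and computing the degree of the resulting relative rotation map $\gamma \to SO(2)$, yields exactly $+1$. The careful bookkeeping of signs is the delicate point: the value $+1$ depends on the convention that the double point $p$ paired with the accessory disc has positive intersection sign, and a negatively-signed double point produces $-1$ by the same argument, matching the sign convention described after the definition of $\Omega$ in Section~\ref{section:intersection-form}.
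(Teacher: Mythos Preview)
Your approach is a direct local computation and is genuinely different from the paper's; the paper does not attempt to track framings along $\gamma$ through the corner, but instead replaces $D$ by an explicit auxiliary disc $D'$ (the two coordinate discs $D^2\times\{1\}$ and $\{1\}\times D^2$ joined by two small triangles), proves via a $3$-ball $B$ with $\partial B = D\cup D'$ that the disc framings of $D$ and $D'$ agree on their common boundary (Lemma~\ref{lemma:disc-for-framing-computation}), and then computes the discrepancy for $D'$ using linking numbers of push-offs in~$S^3$.

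There is a genuine gap in your argument at the step where you identify the disc framing of $D$ along~$\gamma$. You write that since $D$ locally agrees with $N$ near $\gamma$, ``the disc framing of $\nu_D$ along $\gamma$ coincides with the framing of $\nu_N$ at the corresponding points''---but $N$ is an annulus, so there is no canonical framing of $\nu_N$ to refer to, and you never specify which one you mean. More fundamentally, the disc framing of $D$ is the unique framing that extends over \emph{all} of~$D$, and $D$ contains the caps $A_\pm$ built from the accessory disc $A_1^i$, which live far from your local $\R^2\times\R^2$ model. Knowing a collar of $\gamma$ in $N$ tells you what the normal bundle $\nu_D|_\gamma$ is as a bundle, but it does not by itself tell you which section of that bundle extends over the interior of~$D$. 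This is exactly the content of the paper's $3$-ball lemma: it is what allows one to replace the global extension problem for $D$ by an entirely local computation with~$D'$.

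A secondary issue is the corner computation itself. A $\pi/2$ rotation of $T_xN$ is not an integer twist, and the smoothing of the meridian-longitude wedge into a $(1,1)$ curve involves two transitional arcs (this is why the paper needs \emph{two} small triangles in~$D'$); the full rotation must be tracked across both, and you have asserted rather than computed the outcome. This could plausibly be repaired with more care, but without first pinning down the disc framing of $D$ there is nothing concrete to compare the surgery framing against.
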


Given the claim, we perform a single boundary twist of $D$ about its
boundary, before using it to surgery $T$ into $S_{2i}$.  This gives
rise to a contribution of $1-t$ to the self intersection
$\mu(S_{2i})$, therefore a contribution of $\mu(S_{2i}) +
\ol{\mu(S_{2i})} = 1-t + 1-t^{-1} = z$ to $\lambda(S_{2i},S_{2i})$ as
desired.

Roughly, the $+1$ from the claim arises from the self linking of the
$(1,1)$ curve on the Clifford torus.  This was previously observed in
a different context in~\cite[Lemma~4]{Freedman-Kirby:1978}.  Note that
if the sign of the associated double point of $D_0$ were $-1$, then
the difference in framings would instead be $-1$.

The claim will follow from the observation of the next lemma.  In
order to state the lemma, we describe a disc $D'$ in a~$D^4$
neighbourhood of a double point $p$ of $D_0$, whose boundary is the
$(1,1)$-curve on the Clifford torus $T$ i.e.\ the boundaries of $D$
and $D'$ coincide.  Recall that the Clifford torus is $T = S^1 \times
S^1 \subset \R^2 \times \R^2 \cong \R^4 \cong D^4$.  The meridian is
$S^1 \times \{1\}$ and the longitude is $\{1\} \times S^1$.  Take the
union of the two discs $D^2 \times \{1\}$ and $\{1\} \times D^2$ and
add two small triangles as shown in
Figure~\ref{figure:framing-small-triangles}.

\begin{figure}[t]
\begin{center}
\begin{tikzpicture}[scale=.8]
\node[anchor=south west,inner sep=0,scale=.8] at (0,0)
{\includegraphics[scale=0.23]{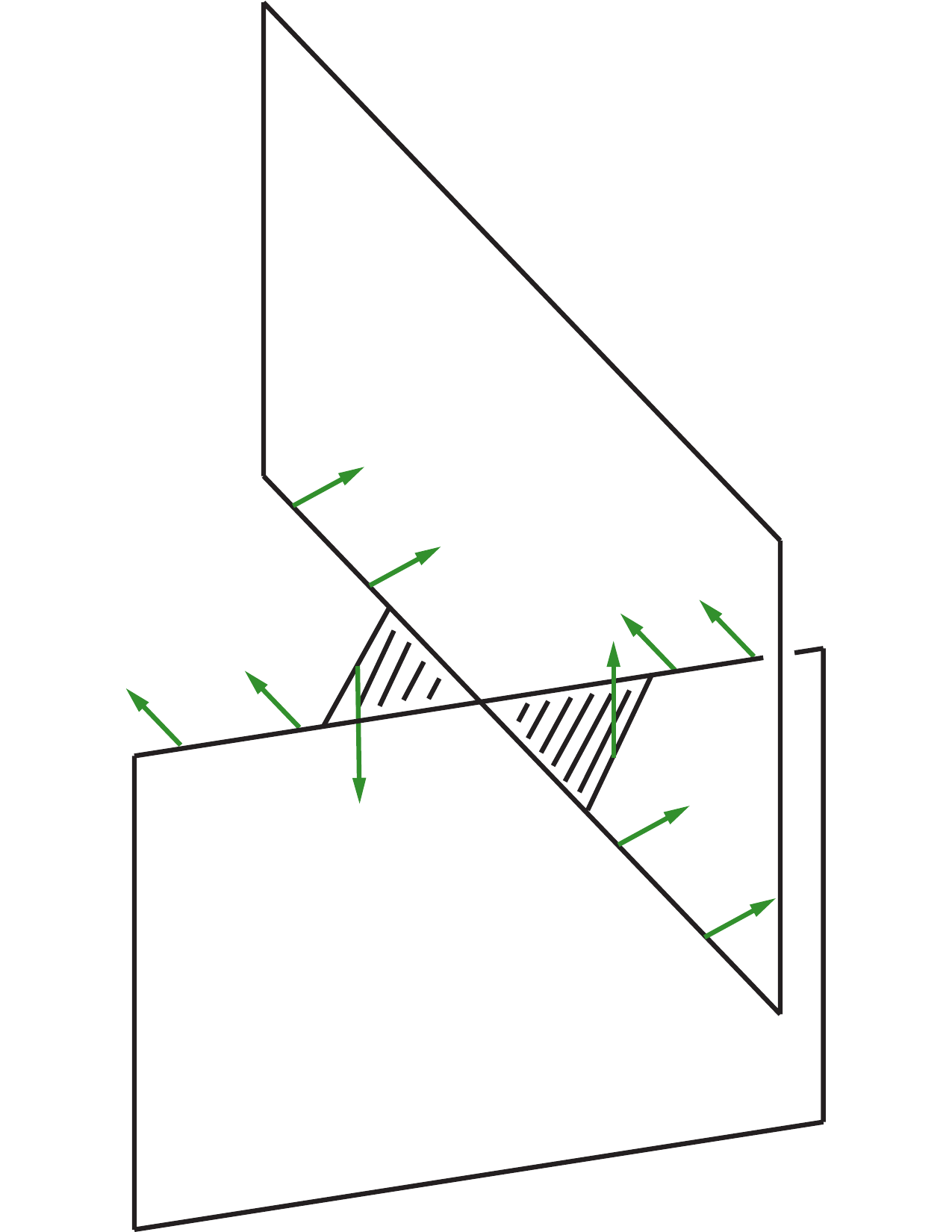}};
\node at (4.6,2)  {\small $D'$};
\end{tikzpicture}
\end{center}
\caption{The framing of the normal bundle of $D'$ restricted to
  $\partial D'$, in a neighbourhood of the intersection on $T$ of the
  meridian and longitude.  The shaded triangles are part of $D'$, and
  the fact that the framing stays normal to them means that it twists
  out of the tangent bundle of $T$.}
    \label{figure:framing-small-triangles}
\end{figure}

\begin{lemma}\label{lemma:disc-for-framing-computation}
  There exists a 3-ball $B$ in $D^4$ whose boundary is the 2-sphere
  formed from the union of the surgery disc $D$ with the disc $D'$.
  Moreover there exists a framing for the normal bundle of $B$ which
  restricts to the disc framings of both $D$ and $D'$.
\end{lemma}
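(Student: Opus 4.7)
The plan is twofold: (i) establish the existence of the $3$-ball $B$, and (ii) show that the framing statement follows automatically from contractibility.

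For (i): after arranging $\partial D' = \partial D$ as the same $(1,1)$-curve on $T$, the union $D \cup D'$ is a (possibly immersed) $2$-sphere in $D^4$. Since $\pi_2(D^4) = 0$, this sphere bounds a map $D^3 \to D^4$; applying a generic perturbation in the interior yields an immersed $3$-ball $B$ with $\partial B = D \cup D'$. Alternatively, $B$ can be constructed explicitly as $B_1 \cup_S B_2$, where $B_1 \cong A_1^i \times [-1,1]$ is the $3$-dimensional thickening of the accessory disc inside its normal bundle (so that $\partial B_1 = A_+ \cup A_- \cup S$ with $S = \nu A_1^i \cap N$ the annular strip removed from $N$ during surgery), and $B_2$ is a $3$-ball in the local $\R^4 \cong \R^2 \times \R^2$ neighbourhood of the double point $p$ capping off the remaining boundary $(N \setminus S) \cup D'$ in this contractible local model. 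Either way, $B$ is contractible as an abstract $3$-manifold.

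For (ii): the normal bundle $\nu B$ in $D^4$ is a rank-$1$ line bundle over the contractible manifold $B$, hence trivial; choose any nowhere-vanishing section $\sigma$. At a point $x \in D \subset \partial B$, the $2$-dimensional normal space $\nu_x D$ in $D^4$ decomposes as $\nu_x B \oplus \mbf{n}(x)$, where $\mbf{n}(x)$ is the inward tangent vector to $B$ normal to $D$. Thus $(\sigma(x), \mbf{n}(x))$ furnishes a framing of $\nu D$ in $D^4$. Since $D$ is a disc (hence simply connected), its rank-$2$ normal bundle admits a unique framing up to homotopy, namely the disc framing, so the framing coming from $B$ agrees with the disc framing of $D$. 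The identical argument applies to $D'$.

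The principal obstacle is making the explicit construction of $B_2$ precise: one must verify that its boundary matches $D'$, $N \setminus S$, and $S$ along the correct curves on $T$, in particular that the smoothing triangles built into $D'$ near the corner $q$ are consistent with the portions of the $(1,1)$-curve that emerge as $\partial D$ after the surgery replacing the wedge $\mu \vee_q \lambda$. Fortunately, for the framing conclusion only the abstract existence of an immersed contractible $3$-ball is required, so this bookkeeping can be avoided.
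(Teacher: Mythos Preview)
Your framing argument in (ii) is correct and is essentially the paper's: the normal bundle of $B$ is a line bundle over a contractible $3$-manifold, hence trivial, and a nonvanishing section restricted to $D$ and to $D'$ must give the unique disc framing of each.

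The gap is in (i). Your abstract argument does not produce an immersed $3$-ball: a generic smooth map $D^3\to D^4$ extending a given immersion on the boundary is \emph{not} an immersion. The stratum of linear maps $\R^3\to\R^4$ of rank $\le 2$ has codimension $(3-2)(4-2)=2$, so by Thom transversality a generic map has a $1$-dimensional singular locus. Without an immersion there is no normal line bundle on $B$, and your argument in (ii) cannot begin. (One can rescue this via Hirsch--Smale immersion theory together with $\pi_2(V_3(\R^4))=\pi_2(SO(4))=0$, but that is not what you invoked.)

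Your explicit construction also does not work as written. The annulus $N$ follows the entire double point loop $\alpha$ on $D_0$, which in general leaves the local $\R^2\times\R^2$ neighbourhood of $p$; hence $N\setminus S$ is not contained in that local model, and no $3$-ball $B_2$ inside the model can cap it off. The paper's construction replaces your local $B_2$ by the global piece $\alpha\times D^2$, the normal $2$-disc bundle of $D_0$ restricted to the double point loop. This piece carries all of $N=\alpha\times S^1$ on its boundary, together with the two meridional discs at $p$ that constitute~$D'$; gluing it to your $B_1=A_1^i\times[-1,1]$ gives the required immersed $3$-ball.
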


\begin{proof}
  The 3-ball $B$ is constructed from glueing together $A_1^i \times
  [-1,1]$ and $\a \times D^2$ --- recall that $A_1^i \times \{\pm 1\}
  \cong A_{\pm}$ and $\a \times S^1 =N$.  The normal bundle of $B$ is
  one dimensional, so the framing only depends on a choice of sign.
  The framing determines a nonvanishing vector field in the normal
  bundle of $D$ and $D'$, which therefore must restrict to the disc
  framings on their common normal boundary.
\end{proof}

By Lemma~\ref{lemma:disc-for-framing-computation}, we can compute the
disc framing of $D$ restricted to its boundary by computing the disc
framing of $D'$.  The surgery framing is $+1$ with respect to the disc
framing of $D'$. The surgery framing is shown in
Figure~\ref{figure:surgery-framing-1-1-curve}, where we see that the
linking number of the two curves is $+1$.

\begin{figure}[H]
\begin{center}
\begin{tikzpicture}[scale=.7]
\node[anchor=south west,inner sep=0,scale=.7] at (0,0)
{\includegraphics[scale=0.4]{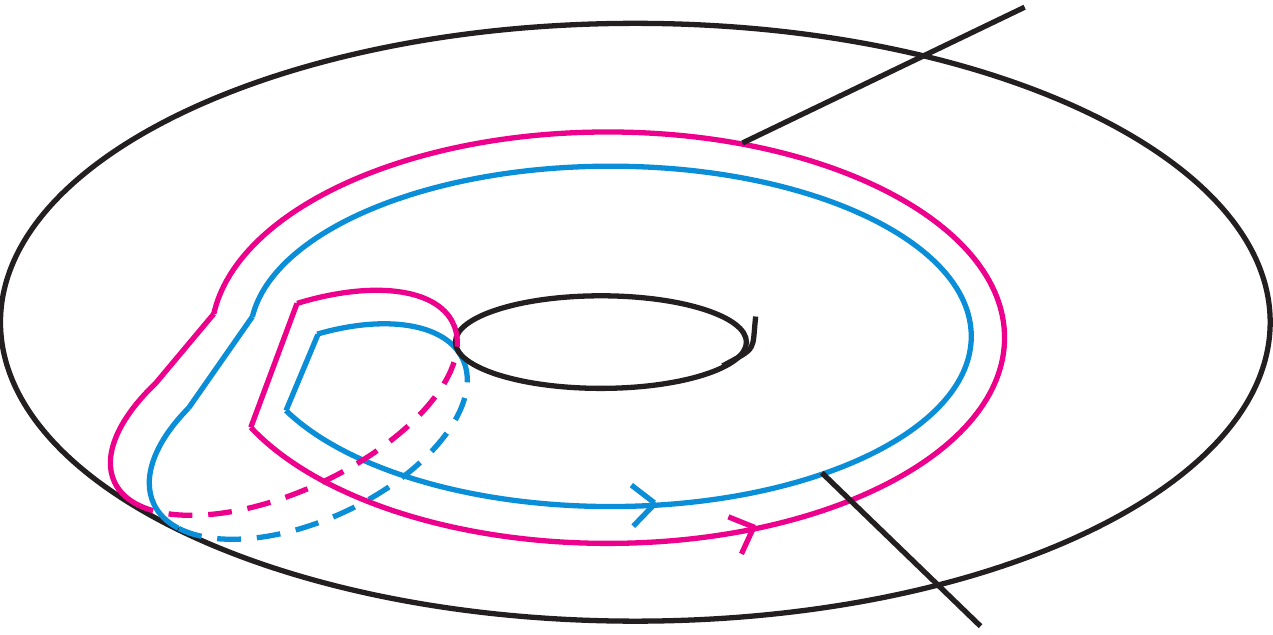}};
\small
\node at (8.9,2.1)  {$T$};
\node at (7.4,-.3)  {$(1,1)$ curve};
\node at (7.9,4.5)  {surgery push-off};
\end{tikzpicture}
\end{center}
\caption{The $(1,1)$-curve on the Clifford torus $T$ and a push-off
  using the surgery framing.  The linking number in $S^3$ is $+1$.}
    \label{figure:surgery-framing-1-1-curve}
\end{figure}

To compute the framing of $D'$, isotope it in a collar neighbourhood
of the boundary so that a (smaller) collar neighbourhood lies in
$S^3$.  The framing of $D'$ and the surgery framing agree along the
meridian of $T$, are opposite along the longitude, and in a
neighbourhood of the intersection point of the longitude and the
meridian of $T$ there is a rotation.  The arrangement is as shown in
Figure~\ref{figure:framing-small-triangles}.  As the framing vector
for $D'$ stays normal to the two small triangles we see that it
undertakes a single full $-1$ twist with respect to the surgery
framing.  We compute that the framing of $D'$ induces a push-off which
has linking number zero with the $(1,1)$ curve of~$T$.  Thus the
surgery framing is $+1$ with respect to the disc framing.  This
completes the proof of the claim and therefore of the computation of
the self-intersection of the spheres $S_{2i}$.

%====================================

\section{Homology of the boundary of $W$}
\label{section:homology-boundary}

\begin{proposition}\label{prop:order-homology-partial-W-new}
  The first homology $H_1(\partial W;\Z[\Z])$ is isomorphic to
  $H_1(X_K;\Z[\Z])\oplus (\Z[\Z]/\langle z\rangle)^d$.  Consequently
  the order of $H_1(\partial W;\Z[\Z])$ is $(t-1)^{2d}\Delta_K(t)$.
\end{proposition}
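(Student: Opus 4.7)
The plan is to apply Mayer--Vietoris with $\Z[\Z]$-coefficients to the decomposition $\partial W = X_K \cup_{T_K} \partial_+$, where $\partial_+ := \overline{\partial(\nu D_0) \setminus \nu K}$ and the local system is pulled back from $\pi_1(W) \cong \Z$ along the inclusions.  A cellular computation on $T^2$ (with meridian $\mapsto t$ and longitude $\mapsto 1$) gives $H_1(T_K;\Z[\Z]) \cong \Z[\Z]/(t-1)$, generated by the longitude $\ell_K$, while $H_0(T_K;\Z[\Z]) \cong \Z$ maps diagonally (hence injectively) into $H_0(X_K;\Z[\Z]) \oplus H_0(\partial_+;\Z[\Z]) \cong \Z \oplus \Z$.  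Thus the sequence collapses to
\[
H_1(\partial W;\Z[\Z]) \cong \coker\bigl( H_1(T_K;\Z[\Z]) \to H_1(X_K;\Z[\Z]) \oplus H_1(\partial_+;\Z[\Z]) \bigr).
\]
The longitude $\ell_K$ bounds in $X_K$ because it is $0$-framed, and it bounds in $\partial_+$ because pushing $D_0$ along a section of its normal bundle yields a $2$-chain in $\partial_+$ with boundary $\ell_K$ (the double points of $D_0$ become interior transverse self-intersections of the chain rather than boundary contributions, and this argument lifts verbatim to the $\Z$-cover).  Therefore the Mayer--Vietoris map from $H_1(T_K;\Z[\Z])$ vanishes and $H_1(\partial W;\Z[\Z]) \cong H_1(X_K;\Z[\Z]) \oplus H_1(\partial_+;\Z[\Z])$.

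The main step is then to show $H_1(\partial_+;\Z[\Z]) \cong (\Z[\Z]/\langle z \rangle)^d$, one summand per double point.  I would prove this via a further Mayer--Vietoris decomposition of $\partial_+$ into an ``embedded piece'' (the part of $\partial \nu D_0$ away from double points, corresponding to an embedded model of $D_0$) plus, at each double point $p_i$, a local ``Clifford block'' $T_i \times I$ glued to the embedded piece along its two sheet-tubes.  The embedded piece contributes trivially to $H_1$ with $\Z[\Z]$-coefficients, since the $d=0$ case shows it is essentially a solid torus whose generating meridian of $D_0$ maps isomorphically to $t \in \pi_1(W)$ with contractible $\Z$-cover.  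On each Clifford block, both standard basis curves of $T_i$ are meridians of sheets of $D_0$ and hence both represent $t$; the cellular $\Z[\Z]$-chain complex of a block together with its gluing annuli therefore presents $\Z[\Z]/((1-t)(1-t^{-1})) = \Z[\Z]/\langle z \rangle$ in degree one.  Independence of the $d$ contributions follows from the disjointness of the blocks.

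The order statement follows by multiplicativity across the direct sum:  the order of $\Z[\Z]/\langle z \rangle$ equals $z = -t^{-1}(t-1)^2$, which is $(t-1)^2$ up to units in $\Z[\Z]$, so the second summand has order $(t-1)^{2d}$; combined with $|H_1(X_K;\Z[\Z])| = \Delta_K(t)$ this yields $(t-1)^{2d}\Delta_K(t)$.  The hard part will be the local Mayer--Vietoris step producing $\Z[\Z]/\langle z \rangle$ at each double point, where one must carefully identify the Clifford block decomposition of $\partial_+$ and track how the two meridional sheet-tubes contribute the factors $(1-t)$ and $(1-t^{-1})$ that combine into the relation $z$.
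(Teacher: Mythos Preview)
Your outer Mayer--Vietoris for $\partial W = X_K \cup_{T_K} \partial_+$ and the treatment of the longitude match the paper exactly; the divergence is in how you compute $H_1(\partial_+;\Z[\Z])$.  The paper does not decompose $\partial_+$ further.  Instead it observes that $\nu D_0$ is a $d$-fold self-plumbed $D^2\times D^2$, writes down the standard Kirby diagram for $\partial(\nu D_0)$ (a $0$-framed unknot for each plumbing, each linking a dotted circle corresponding to $\partial_-$), removes $\partial_-$, and draws the infinite cyclic cover directly as $d$ infinite chains of $\pm 2$-framed clasped unknots in $D^2\times\R$.  From that picture one reads off immediately that $H_1(\partial_+;\Z[\Z])$ is presented by the diagonal matrix with entries $\pm(2-t-t^{-1})=\pm z$, giving $(\Z[\Z]/\langle z\rangle)^d$ in one stroke.

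Your inner Mayer--Vietoris is a reasonable alternative, but the assertion that the ``embedded piece'' $E$ has trivial $H_1(\cdot;\Z[\Z])$ is wrong as stated.  For $d>0$ that piece is not a solid torus: it is the circle bundle over the disc with $2d$ sub-discs removed, i.e.\ $P\times S^1$ with $P$ a planar surface with $2d$ inner boundary circles.  Each inner boundary circle, pushed into $\partial_+$, is a meridian of the \emph{other} sheet at that double point and hence also maps to $t^{\pm1}$ in $\pi_1(W)$; a cellular computation then gives $H_1(E;\Z[\Z])\cong(\Z[\Z]/(t-1))^{2d}$, not $0$, and the $\Z$-cover is certainly not contractible.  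Your decomposition can still be pushed through---the extra $(t-1)$-torsion from $E$, from the $d$ Clifford blocks $T^2\times I$, and from the $2d$ gluing tori assembles (together with the $H_0$-connecting map) into the extension $0\to(\Z[\Z]/(t-1))^d\to(\Z[\Z]/z)^d\to(\Z[\Z]/(t-1))^d\to 0$---but this requires tracking all the maps carefully, and is considerably more bookkeeping than the paper's surgery-presentation argument.
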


\begin{proof}
    As before, let $\nu D_0$ be a (closed) regular neighbourhood of the order zero disc $D_0$
  in~$D^4$.  Since $D_0$ has $d$ double points, $\nu D_0$ is obtained
  by $d$ self plumbings performed on a 2-handle $D^2\times D^2$.  We
  have $W=\cl(D^4\setminus \nu D_0)$.  Let $\partial_+ = \partial(\nu
  D_0) \cap W$ and $\partial_- = \cl(\partial (\nu D_0)
    \setminus \partial_+)$.  Let $X_K=\cl(S^3\setminus \partial_-)$ be
  the exterior of the knot~$K$.  Then $\partial W = \partial_+ \cup
  X_K$ and $\partial_+ \cap X_K=\partial X_K$.

  \begin{figure}[ht]
    \begin{tikzpicture}[
      scale=.4,line width=1pt,
      over/.style={draw=white,double=black,double distance=1pt,line width=1.5pt}
      ]
      \small
      \foreach \y in {-3, 3} {
        \begin{scope}[yshift=\y cm]
          \draw[over,double distance=.6pt] (2,2.5)--(2,0) (2.4,2.5)--(2.4,0);
          \draw[over] (0,0) ..controls +(0,1.2) and +(-1.2,0).. ++(2,2) -- ++(1,0)
          ..controls +(.25,0) and +(0,.25).. ++(.5,-.5) coordinate(a);
          \draw[over] (a) ++(-.5,0) ..controls +(0,.25) and +(-.25,0).. ++(.5,.5)
          -- ++(1,0) ..controls +(1.2,0) and +(0,1.2).. ++(2,-2) node[right] {$0$}
          ..controls +(0,-1.2) and +(1.2,0).. ++(-2,-2)
          -- ++(-2.5,0) ..controls +(-1.2,0) and +(0,-1.2).. ++(-2,2);
          \draw[over] (a) ++(-.5,0) ..controls +(0,-.25) and +(-.25,0).. ++(.5,-.5)
          -- ++(1,0) ..controls +(.6,0) and +(0,.6).. ++(1,-1) ..controls +(0,-.6) and +(.6,0).. ++(-1,-1)
          -- ++(-2.5,0) ..controls +(-.6,0) and +(0,-.6).. ++(-1,1) ..controls +(0,.6) and +(-.6,0).. ++(1,1)
          -- ++(1,0) coordinate(last);
          \draw[over] (last) ..controls +(.25,0) and +(0,-.25).. ++(.5,.5);
          \draw[over,double distance=.6pt] (2,0)--(2,-2.5) (2.4,0)--(2.4,-2.5);
        \end{scope}
      }
      \foreach \y in {0,1,2} \draw[cap=round,line width=1pt] (2,-.3+.3*\y)--+(0,0) (2.4,-.3+.3*\y)--+(0,0);
      \draw[rounded corners=.3cm,line width=.6pt] (2,-5.5) -- ++(0,-1) -- ++(-4,0) -- ++(0,13) -- ++(4,0) -- ++(0,-1);
      \draw[rounded corners=.45cm,line width=.6pt] (2.4,-5.5) -- ++(0,-1.4) -- ++(-4.8,0) -- ++(0,13.8)
      -- ++(4.8,0) -- node[right,xshift=-1mm,yshift=2mm]{$\partial_-$} ++(0,-1.4);
      \foreach \y in {-6.5,6.9} {
        \draw[thin] (0,\y) arc[x radius=.15,y radius=.2,start angle=90,end angle=-90];
        \draw[thin,cap=round,dash pattern=on 0mm off .35mm]
        (0,\y) arc[x radius=.15,y radius=.2,start angle=90,end angle=270];
      }

      \draw[<-,very thick] (7.5,0) -- node[above]{\begin{tabular}{c}covering\\map\end{tabular}} +(4,0) ;

      \begin{scope}[xshift=12cm]
        \draw[over,double distance=.5pt,dotted] (6,4) arc[x radius=.8,y radius=4,start angle=90,end angle=270];
        \foreach \y in {-2,2} {
          \foreach \x in {0,3,6,9} {
            \begin{scope}[yshift=\y cm,xshift=\x cm]
              \draw[over] (0,.8) ..controls +(.7,0) and +(0,.7).. (1.9,0);
              \draw[over] (3,.8) ..controls +(-.7,0) and +(0,.7).. (1.1,0) ..controls +(0,-.7) and +(-.7,0).. (3,-.8);
              \draw[over] (1.9,0) ..controls +(0,-.7) and +(.7,0).. node[below]{$2$}(0,-.8);
            \end{scope}
          }
        }
        \draw[over,double distance=.5pt] (6,4) arc[x radius=.8,y radius=4,start angle=90,end angle=-90];
        \draw (0,4)--++(12,0) (0,-4)--++(12,0);
        \foreach \x in {3, 9} \foreach \y in {0,1,2} \draw[cap=round,line width=1pt] (\x,-.3+.3*\y)--+(0,0);
      \end{scope}
    \end{tikzpicture}

    \caption{The 3-manifold $\partial_+H$ and its infinite cyclic cover.}
    \label{figure:partial+H-and-cover}
  \end{figure}
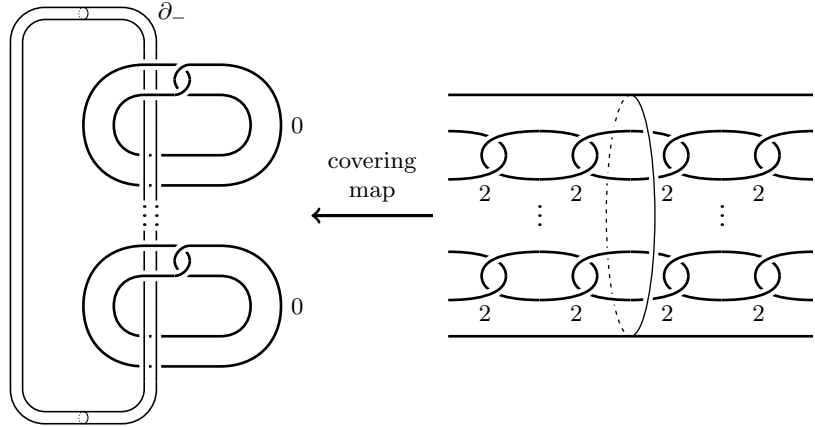

  The left hand side of Figure~\ref{figure:partial+H-and-cover} is a surgery
  description of $\partial(\nu D_0) = \partial_+ \cup \partial_-$ obtained from
  a standard Kirby diagram of the plumbed handle.  More precisely, by choosing
  double point loops for self plumbings, a homeomorphism between $\partial(\nu
  D_0)$ and the 3-manifold given by the surgery description is determined.  For
  the purpose of this section, temporarily choose double point loops whose
  push-offs along the accessory framing are trivial in~$\pi_1(W)=\Z$. This can
  be done by wrapping part of a double point loop on a sheet near the double
  point, around another sheet, if necessary.  (The double point loops used here
  may be different from those in other sections of the article.)  Now, remove the solid torus
  $\partial_-$ and take the infinite cyclic cover of~$\partial_+$.  Note that
  the meridians of the zero-framed circles correspond to push-offs of double
  point loops along the accessory framing, and so they are trivial
  in~$\pi_1(W)=\Z$. It follows that the infinite cyclic cover is given by the
  surgery diagram in the right hand side of
  Figure~\ref{figure:partial+H-and-cover}, which consists of $d$ infinite chains
  lying in $D^2\times \R$.  Observe that the zero framing of the surgery curve
  in the base corresponds to the $\pm 2$ framing of the surgery curve in the
  cover.  The signs of the surgery coefficients in the cover and the signs of
  the clasps are determined by the sign of the double points.

  From the surgery description of the infinite cyclic cover, we obtain
  a presentation of $H_1(\partial_+;\Z[\Z])$ with $d$ generators, say
  $v_i$, and $d$ defining relations $\pm(2-t^{-1}-t)v_i=0$.  It
  follows that $H_1(\partial_+;\Z[\Z]) = \bigoplus^{d}
  \Z[\Z]/(2-t^{-1}-t)$.

  Also, $H_1(\partial X_K;\Z[\Z]) \cong \Z$ is generated by a
  longitude of $K$, which is zero in each of $H_1(\partial_+;\Z[\Z])$
  and $H_1(X_K;\Z[\Z])$.  Therefore, by a Mayer-Vietoris argument for
  $\partial W = \partial_+ \cup X_K$, we obtain the following, from
  which the promised conclusion follows immediately.
  \begin{align*}
    H_1(\partial W;\Z[\Z]) &\cong H_1(X_K;\Z[\Z]) \oplus
    H_1(\partial_+;\Z[\Z])
    \\
    &\cong H_1(X_K;\Z[\Z])\oplus (\Z[\Z]/(2-t^{-1}-t))^{d}. \qedhere
  \end{align*}
\end{proof}

\section{Proof of Alexander polynomial assertions of main theorems}
\label{section:proof-theorem-compute-alex-poly-from-whitney-data}

We begin with a straightforward lemma.

\begin{lemma}\label{lemma:little-UCSS-computation}
  The relative homology $H_2(W,\partial W;\Z[\Z])$ is isomorphic to $\Z[\Z]^d$.
\end{lemma}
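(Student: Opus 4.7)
The plan is to apply Poincar\'e--Lefschetz duality and then the universal coefficient spectral sequence, mirroring the computation in the proof of Lemma~\ref{lemma:pi-2-free-module}. Since $W$ is a compact oriented 4-manifold with boundary, duality with twisted $\Z[\Z]$-coefficients gives
\[
H_2(W,\partial W;\Z[\Z]) \cong H^2(W;\Z[\Z]),
\]
so it suffices to compute the right-hand side.

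Next I would apply the universal coefficient spectral sequence
\[
E_2^{p,q} = \Ext^p_{\Z[\Z]}(H_q(W;\Z[\Z]),\Z[\Z]) \Longrightarrow H^{p+q}(W;\Z[\Z]).
\]
On the diagonal $p+q=2$, the term $E_2^{1,1}$ vanishes because $H_1(W;\Z[\Z]) = 0$ (as $\pi_1(W) \cong \Z$), and $E_2^{2,0} = \Ext^2_{\Z[\Z]}(\Z,\Z[\Z]) = 0$ via the length-one projective resolution $\Z[\Z] \xrightarrow{t-1} \Z[\Z]$ of the augmentation module. Hence only $E_2^{0,2} = \Hom_{\Z[\Z]}(H_2(W;\Z[\Z]),\Z[\Z])$ survives. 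The differentials $d_r^{0,2}$ for $r \ge 2$ land in entries that vanish, either because $H_1(W;\Z[\Z]) = 0$ or because $\Z[\Z]$ has homological dimension~$2$, so the spectral sequence collapses at the $E_2$ page in total degree~$2$.

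By Lemma~\ref{lemma:rank-is-d} we have $H_2(W;\Z[\Z]) \cong \pi_2(W) \cong \Z[\Z]^d$, hence
\[
H^2(W;\Z[\Z]) \cong \Hom_{\Z[\Z]}(\Z[\Z]^d,\Z[\Z]) \cong \Z[\Z]^d,
\]
and combining with the duality isomorphism yields the claim. There is no real obstacle here: the argument is the dual version of the calculation of $H^2(W,\partial W;\Z[\Z])$ performed in the proof of Lemma~\ref{lemma:pi-2-free-module}, and every input has already been established earlier in the paper.
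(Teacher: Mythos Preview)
Your proof is correct and essentially identical to the paper's: both apply Poincar\'e--Lefschetz duality to reduce to $H^2(W;\Z[\Z])$, then use the universal coefficient spectral sequence together with $H_1(W;\Z[\Z])=0$, the length-one projective resolution of $\Z$, and homological dimension two of $\Z[\Z]$ to collapse to $\Hom_{\Z[\Z]}(H_2(W;\Z[\Z]),\Z[\Z])$, and finally invoke Lemma~\ref{lemma:rank-is-d}.
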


\begin{proof}
  We have isomorphisms
  \[
  H_2(W,\partial W;\Z[\Z]) \cong H^2(W;\Z[\Z]) \cong
  \Hom_{\Z[\Z]}(H_2(W;\Z[\Z]),\Z[\Z]) \cong \Z[\Z]^d.
  \]
  The last isomorphism uses that $H_2(W;\Z[\Z]) \cong \Z[\Z]^d$.  The
  second isomorphism uses the universal coefficient spectral
  sequence
  \[
    E^2_{p,q} = \Ext_p^{R}(H_q(W;\Z[\Z]), \Z[\Z]) \Longrightarrow
    H^n(W;\Z[\Z])
  \]
  as we shall now explain.  Since $H_1(W;\Z[\Z]),\Z[\Z])=0$ and $\Z$ has a length
  one projective resolution over $\Z[\Z]$ (see the proof of
  Lemma~\ref{lemma:rank-is-d}), the only surviving $E_2$ term on the
  line $p+q=2$ is $E_2^{0,2}=\Hom_{\Z[\Z]}(H_2(W;\Z[\Z]),\Z[\Z])$.
  The differentials $d_r$ ($r\ge 2$) defined on $E_r^{0,2}$ are
  trivial since $H_1(W;\Z[\Z])=0$ and $\Z[\Z]$ has homological
  dimension two.  Therefore the spectral sequence collapses and we
  have the isomorphism claimed.
\end{proof}

We are ready to connect the pieces of the previous two sections to
prove the Alexander polynomial parts of the main theorems.  The
assertions relating to the Blanchfield form are addressed later in
Section~\ref{section:blanchfield}. Theorem~\ref{theorem:main-blanchfield-alexander}
also uses Lemma~\ref{lemma:spheres-give-basis} below.

\begin{proof}[Proof of Alexander polynomial assertions of
  Theorem~\ref{theorem:compute-alex-poly-from-whitney-data},
  Theorem~\ref{theorem:main-presentation-Blanchfield} and
  Theorem~\ref{theorem:main-blanchfield-alexander}]
  Since $H_1(W;\Z[\Z]) \cong 0$ and $H_2(W;\Z[\Z]) \cong H_2(W,\p
  W;\Z[\Z]) \cong \Z[\Z]^d$, the long exact sequence of a pair yields
  \[
  \Z[\Z]^d \xrightarrow{\Lambda} \Z[\Z]^d \to H_1(\p W;\Z[\Z]) \to 0
  \]
  where $\Lambda$ is the intersection form of $W$.  Since $H_1(\p
  W;\Z[\Z])$ is a torsion module it follows that $\Lambda$ is
  injective.  Indeed
  \begin{align*}
    H_2(\partial W;\Z[\Z]) \cong H^1(\p W;\Z[\Z])
    &  \cong \Ext^1_{\Z[\Z]}(H_0(\p W;\Z[\Z]),\Z[\Z])
    \\
    & \cong \Ext^1_{\Z[\Z]}(\Z,\Z[\Z]) \cong \Z
  \end{align*}
  and any $\Z[\Z]$-module homomorphism from $\Z$ into a free $\Z[\Z]$ module is trivial.

  Represent $\lambda$ as a matrix with respect to the basis for $F
  \subseteq H_2(W;\Z[\Z])$ defined in
  Section~\ref{section:intersection-form}, and with respect to a dual
  basis for $F^* \supseteq \Hom_{\Z[\Z]}(H_2(W;\Z[\Z]),\Z[\Z]) \cong
  H_2(W,\partial W;\Z[\Z])$, so that we obtain a matrix for the
  intersection form of $W$ restricted to $F$.  The presentation for
  $H_1(\p W;\Z[\Z])$ implies that
  \[
  \det(\Lambda) = \ord_{\Z[\Z]}(H_1(\p W;\Z[\Z])) \doteq (t-1)^{2d} \Delta_K(t)
  \]
  up to multiplication by a unit $\pm t^m$.  Here we used
  Proposition~\ref{prop:order-homology-partial-W-new}.  Up to
  multiplication by a unit we have $(t-1)^2 \doteq (1-t)(1-t^{-1}) =
  z$.  For Theorem~\ref{theorem:compute-alex-poly-from-whitney-data},
  the matrix $\Omega$ recording intersection data of the Whitney tower
  satisfies $\lambda = z \Omega$, with $\lambda$ as in
  Theorem~\ref{theorem:intersection-form-using-whitney-discs}.
  Therefore, since $\lambda$ is a $d \times d$ matrix, we have
  \[
  \det(\lambda) = \det (z \Omega) = z^d \det \Omega = (t-1)^{2d}
  \det(\Omega)
  \]
  up to a unit in~$\Z[\Z]$.  Similarly, with $\lambda$ as in
  Theorem~\ref{theorem:intersection-form-using-accessory-discs}, we
  have
  \[
  \det(\lambda) = \det (z \Psi) = z^d \det \Psi = (t-1)^{2d} \det(\Psi)
  \]
  up to a unit in $\Z[\Z]$.

  Now suppose that $F=\pi_2(W)$.  Then $\Lambda = \lambda$ so
  $(t-1)^{2d}\Delta_K(t) = (t-1)^{2d}\det(\Psi)$, and cancelling the
  $(t-1)$ factors yields $\det(\Omega) = \Delta_K(t)$.  Thus Alexander
  polynomial assertion of
  Theorem~\ref{theorem:main-blanchfield-alexander} follows from
  Lemma~\ref{lemma:spheres-give-basis} below.

  In general, we have that $F \subseteq H_2(W;\Z[\Z])$ is a free
  module of the same rank.  We have a commutative diagram:
  \[
  \xymatrix{
    \Z[\Z]^d \ar[r]^{\Lambda} & \Z[\Z]^d
    \ar[d]^{P^*} \\ F \ar[u]^{P} \ar[r]^{\lambda} & F^*}
  \]
  where $P=P(t)$ is represented by a matrix which satisfies $\det(P(1)) = \pm 1$.

  Then we have
  \begin{multline*}
    (t-1)^{2d}\det (\Omega) = \det (\lambda) =
    \det(P(t))\det(\Lambda)\det(P(t)^*) \\
    = \det(P(t))\det(P(t^{-1}))\det(\Lambda) = f(t)f(t^{-1})(t-1)^{2d}
    \Delta_K(t),
  \end{multline*}
  where $f(t):=\det(P(t))$. From this we deduce that, modulo norms
  $f(t)f(t^{-1})$ with $f(1)=\pm 1$, we have $\det(\Omega) =
  \Delta_K(t)$ as claimed.  For
  Theorem~\ref{theorem:main-presentation-Blanchfield}, replace
  $\Omega$ with $\Psi$ in the above argument.  As remarked above,
  Theorem~\ref{theorem:main-blanchfield-alexander} uses
  Lemma~\ref{lemma:spheres-give-basis} below.
\end{proof}

The next lemma completes the proof of the Alexander polynomial
assertions of Theorem~\ref{theorem:main-blanchfield-alexander}, by
showing that in a special case our spheres $S_i$, which generate $F$,
in fact give a basis for $\pi_2(W)$.

Let $D_0 \looparrowright D^4$ be an immersed disc in the $4$-ball with
boundary a knot $K \subset S^3$, where $D_0$ is produced as the track
of a homotopy between $K$ and the unknot, followed by a disc bounding
the unknot, where all self-intersection points of the homotopy occur
at time $1/2$, corresponding to $d$ crossing changes of the knot.
More precisely, let $f \colon S^1 \times I \to S^3$ be a homotopy with
$f(S^1,\{s\})$ isotopic to $K$ for $s <1/2$, isotopic to $U$ for $s
>1/2$, and $f(S^1,\{1/2\})$ a singular knot with $d$ double points.
The track of the homotopy is the image of $g \colon S^1 \times I \to
S^3 \times I$ given by $g(x,s) = (f(x,s),s)$.  Cap off $S^3 \times
\{1\}$ with a copy of $D^4$ and cap off $U \subset S^3 \times \{1\}$
with a standard slice disc for the unknot in this $D^4$.

The Clifford tori for the double points can be surgered into 2-spheres
$S_i$, where $i=1,\dots,d$, using accessory discs, just as in the
construction of the spheres $S_{2i}$ in
Section~\ref{section:construction-of-spheres}. As usual define $W:=
D^4 \sm \nu D_0$.

\begin{lemma}\label{lemma:spheres-give-basis}
The 2-spheres $S_i$ form a basis for $\pi_2(W)$.
\end{lemma}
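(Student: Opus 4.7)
The plan is to combine an Euler characteristic calculation with the crossing-change structure to pin down a minimal cell decomposition of $W$, and then identify the resulting $\Z[\Z]$-generators of $\pi_2(W)$ with the spheres $S_i$.

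Decompose $D^4 = (S^3 \times I) \cup D^4_{\text{cap}}$ so that $D_0 = A \cup D_U$, where $A$ is the immersed homotopy annulus in $S^3 \times I$ and $D_U \subset D^4_{\text{cap}}$ is the standard slice disc for the unknot $U$. Correspondingly $W = W_1 \cup_{X_U} W_2$ with $W_1 = (S^3 \times I) \setminus \nu A$ and $W_2 = D^4_{\text{cap}} \setminus \nu D_U \cong S^1 \times D^3$. Since $X_U$ and $W_2 \simeq S^1$ both have contractible infinite cyclic covers, a Mayer--Vietoris argument yields $\pi_2(W) \cong H_2(W_1; \Z[\Z])$. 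A direct Euler characteristic calculation (using $\chi(\nu D_0) = 1 - d$ for the immersed disc with $d$ double points, and $\chi(\partial_+) = 0$ since $\partial_+$ is a $3$-manifold with torus boundary) gives $\chi(W) = d$. Combined with $\pi_1(W) = \Z$, $H_2(W;\Z) = \Z^d$ (generated by the Clifford tori, as in the proof of Lemma~\ref{lemma:rank-is-d}), and $H_3(W; \Z) = 0$ (from Poincar\'e--Lefschetz duality and $H_1(W, \partial W;\Z) = 0$, which follows because a meridian of $D_0$ already generates $H_1(W)$ and lies on $\partial W$), this forces $W$ to have the homotopy type of a $2$-complex with one $0$-cell, one $1$-cell, and $d$ $2$-cells, all attached trivially in $\pi_1$. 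Hence $\pi_2(W) \cong \Z[\Z]^d$, with a basis given by the lifts of the $d$ $2$-cells to the $\Z$-cover.

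The explicit $2$-cells arise from the crossing change structure: $W_1$ admits a handle decomposition relative to $X_K \times I$ in which each crossing change contributes an essential $2$-handle, attached along the meridional unknot $\gamma_i$ that realizes the $i$-th crossing change as $0$-surgery on a $3$-dimensional level. The core of the $i$-th such $2$-handle, together with a $2$-disc capping in the $\Z$-cover of the appropriate intermediate knot exterior, yields a $2$-sphere $\Sigma_i$, and $\{\Sigma_i\}$ forms a basis of $\pi_2(W)$.

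Finally identify $\Sigma_i$ with $S_i$: in the crossing-change construction, each double point of $D_0$ admits a canonical accessory disc supported in a small $B^4$ neighbourhood of the corresponding crossing change. Using this choice the sphere $S_i$ is also supported in the same $B^4$ as $\Sigma_i$, and both arise from surgering the Clifford torus of the $i$-th double point by the same canonical accessory disc; a direct local homology comparison inside this $B^4$ shows that $[S_i] = [\Sigma_i]$ in $H_2(W; \Z[\Z])$. Hence $\{S_i\}$ generates $\pi_2(W)$, and combined with the linear independence from Lemma~\ref{lemma:rank-is-d} they form a basis. The main obstacle is the geometric analysis of $W_1$'s handle decomposition together with the local identification of $\Sigma_i$ with $S_i$; this requires tracking framings and basepoints carefully in the $\Z$-cover and verifying that any auxiliary handles arising from the crossing changes cancel as dictated by the $\chi$-count.
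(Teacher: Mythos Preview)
Your overall strategy---reduce to a handle/cell description of $W$ and then match the resulting $2$-handle basis with the $S_i$---is the same as the paper's. The gap is in the final identification step, and it stems from a false geometric claim.

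You assert that ``each double point of $D_0$ admits a canonical accessory disc supported in a small $B^4$ neighbourhood of the corresponding crossing change,'' and hence that $S_i$ and $\Sigma_i$ can be compared locally. This is not correct. The double point loop at a crossing change leaves the singular point along one strand of the knot and returns along the other; it is a global curve on $D_0$, essentially a longitude of the unknot obtained after the crossing changes. Its null-homotopy (after the requisite meridional twisting) takes place in the complement of the standard slice disc for $U$, i.e.\ in your $W_2\cong S^1\times D^3$, not in any local $4$-ball around the crossing. Consequently the accessory disc, and hence the sphere $S_i$ built from the Clifford torus, the annulus $N$ over the double point loop, and two push-offs of the accessory disc, is a genuinely global object. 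The ``direct local homology comparison inside this $B^4$'' does not exist.

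The paper avoids this by working directly with an explicit Kirby diagram: one dotted circle (the $1$-handle) drawn as an unknot whose marked crossings recover $K$, and one $0$-framed $2$-handle clasping each such crossing. From this picture one reads off the chain complex $\Z[\Z]^d \xrightarrow{0}\Z[\Z]\xrightarrow{t-1}\Z[\Z]$, so the $2$-handle cores are a $\Z[\Z]$-basis of $H_2(W;\Z[\Z])$. The identification with $S_i$ is then made \emph{globally}: the Clifford torus of the $i$th double point is exhibited as the core of the $i$th $2$-handle glued to a disc in $S^3$ (punctured twice by the dotted circle and tubed along it), and surgering this torus with the accessory disc leaves the $2$-handle core used exactly once, so $[S_i]$ equals the $i$th basis element. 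Your abstract $\chi$-count and homotopy-type argument, even if completed, would still leave you needing precisely this global matching; the local shortcut you propose is not available.
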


\begin{proof}
  We construct a handle decomposition for $W$.  Start with a 0-handle
  and a single 1-handle.  Represent this by a Kirby diagram with a
  single dotted unknot.  Perform an isotopy of this unknot until it is
  represented by a diagram having a set of marked crossings
  (potentially a proper subset of all the crossings) which, if
  changed, yield the knot $K$.  At each such crossing, add a single
  $0$-framed 2-handle in the configuration shown in
  Figure~\ref{figure:crossing-change-2-handle}.

  \begin{figure}[t]
    \begin{center}
      \begin{tikzpicture}[scale=.8]
        \node[anchor=south west,inner sep=0,scale=.8] at
        (0,0){\includegraphics[scale=0.12]{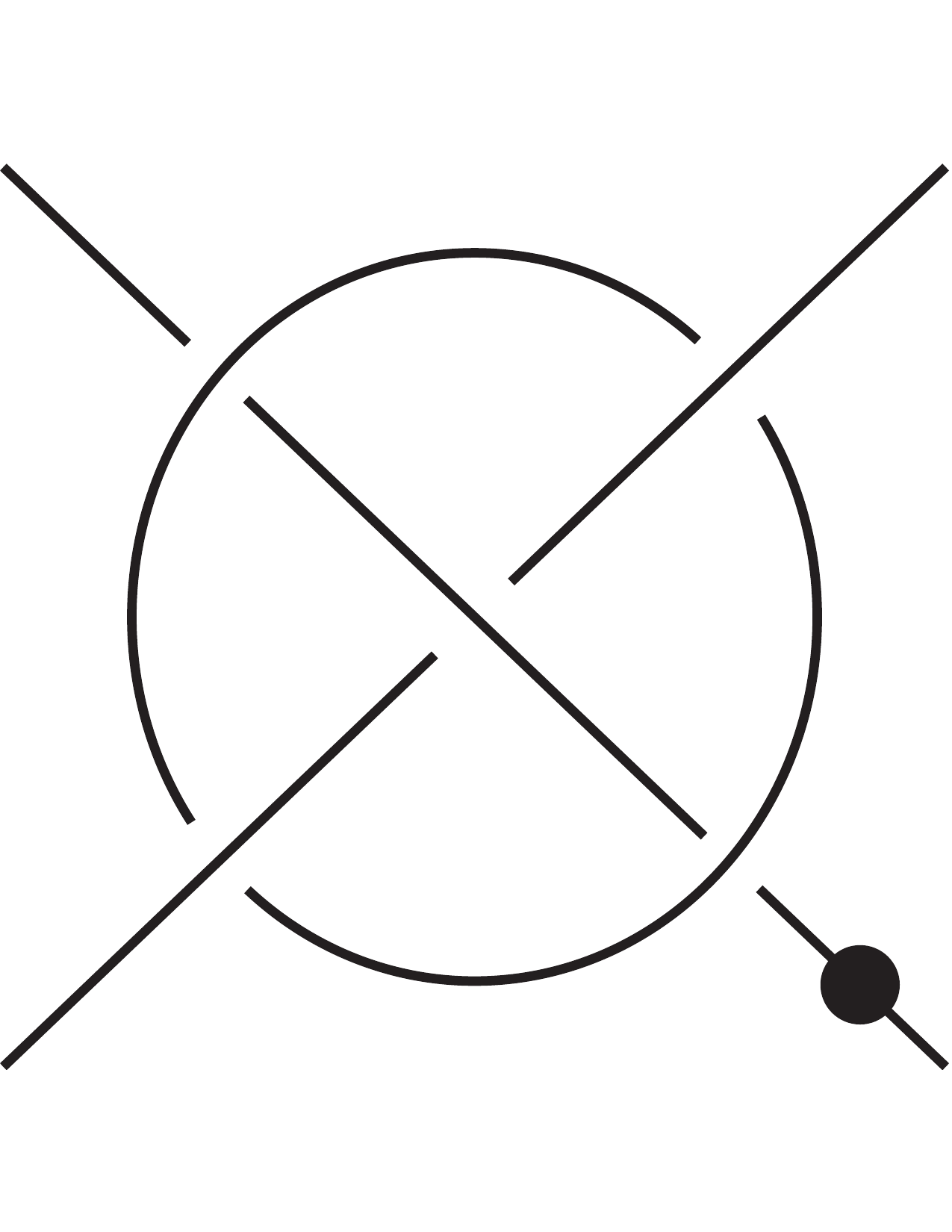}};
        \small
        \node at (.22,1.65)  {$0$};
      \end{tikzpicture}
    \end{center}
    \caption{A 2-handle which we add at each marked crossing of the
      unknotted dotted circle. Altering these crossings
      produces~$K$. This constructs a handle decomposition of the
      exterior of the immersed disc determined by these crossing
      changes.  The two straight strands represent part of the
      unknotted dotted circle.}
    \label{figure:crossing-change-2-handle}
  \end{figure}

  Detailed justification for this can be found in
  \cite[Proposition~3.1]{lightfoot}, which we now summarise.  The
  exterior of an immersed disc constructed by a crossing change on a
  knot can be understood in two steps as follows, which occur at the
  level sets $1/2 \pm \eps$ of the function $F$ given by projection to
  the $I$ factor of $S^3 \times I$, restricted to the exterior of
  $D_0$.  Since we are now passing from $U$ to $K$, we move in the
  direction of decreasing $I$ factor.  First, at $1/2+\eps$, remove a
  small vertical arc which connects the two strands of the crossing.
  One observes that removing the neighbourhood of an arc as described
  does not change the diffeomorphism type i.e.\ the diffeomorphism
  type of $F^{-1}([a,1])$ does not change when $a$ crosses $1/2+\eps$.
  The crossing may be switched by sliding the arcs of the knot (the
  dotted circle) up and down along the removed arc.  Then replace the
  neighbourhood of the vertical arc.  Replacing the arc is equivalent
  to adding the 2-handle as shown in
  Figure~\ref{figure:crossing-change-2-handle}, since this figure
  shows the crossing of the unknotted circle, that is before the
  sliding of the arcs (once the crossing is changed, the 2-handle
  attaching circle bounds a disc in between crossing strands).

  Note that $\pi_1(W) \cong \Z$, since there is a unique 1-handle and
  all 2-handles have no effect on the fundamental group.  A chain
  complex $C_*(W;\Z[\Z])$ is given by (compare
  \cite[Proposition~4.4]{lightfoot})
  \[
  \Z[\Z]^d = C_2 \xrightarrow{0} \Z[\Z] = C_1 \xrightarrow{t-1} \Z[\Z] = C_0.
  \]
  From this we compute $H_2(W;\Z[\Z]) \cong \Z[\Z]^d$ and we note that
  the set of 2-handles give a basis.  The Clifford torus can be seen
  as the core of each 2-handle, union the punctured torus constructed by taking
  a disc bounded by the zero-framed component in
  Figure~\ref{figure:crossing-change-2-handle}, which intersects the
  knot in two points, and tubing along the knot.  The double point
  loop (after suitable twisting) is null homotopic in the complement
  of the standard slice disc for the unknot found in time $s>1/2$,
  therefore the Clifford torus can be surgered to a sphere using the
  procedure of Section~\ref{section:construction-of-spheres}. Since
  the core of the 2-handle is still used precisely once, this
  therefore represents a basis element of $\pi_2(W)$.
\end{proof}

\section{Examples and an algorithm for computation}
\label{section:examples}

\subsection{Using Whitney towers}

We give some examples of computing the Alexander polynomial using
Whitney tower data.  Movies depicting twisted order one Whitney towers
with boundary knots the figure eight knot $4_1$ and the trefoil $3_1$
are shown on the left and right of
Figure~\ref{figure:examples-trefoil-figure-eight} respectively.  The
movies are explained in the caption to the figure.

\begin{figure}[ht]
  \begin{center}
    \begin{tikzpicture}[scale=.7]
      \node[anchor=south west,inner sep=0,scale=.7] at (0,0)
      {\includegraphics[scale=0.5]{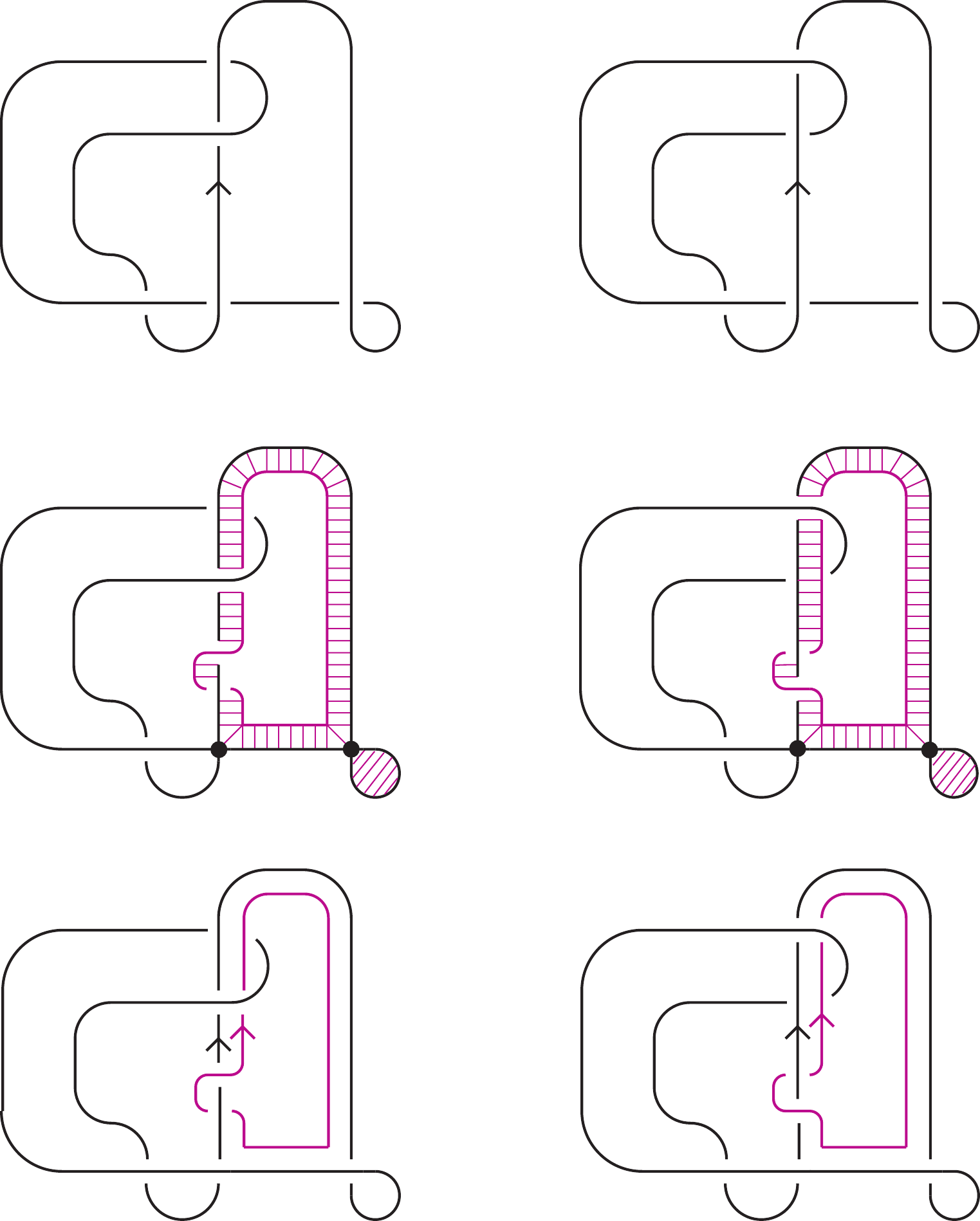}};
      \small
      \node at (3.7,6.8)  {$D_1$};
      \node at (11.4,6.8)  {$D_1$};
      \node at (5.0,6.45)  {$A_1$};
      \node at (12.6,6.45)  {$A_1$};
      \node at (3.75,11.75)  {$4_1$};
      \node at (11.4,11.75)  {$3_1$};
      \node at (3.65,.35)  {unlink};
      \node at (11.3,.35)  {unlink};
    \end{tikzpicture}
  \end{center}
  \caption{Movies for twisted Whitney towers in $S^3 \times I$.
    Movies go top to bottom. Left 3 pictures: Whitney tower cobounding
    the figure eight knot $4_1 \subset S^3 \times \{0\}$. Right 3
    pictures: Whitney tower cobounding the trefoil $3_1 \subset S^3
    \times \{0\}$.  Top picture: the knot, which will evolve via a
    homotopy to the unknot over time, tracing out an immersed
    disc~$D_0$.  Middle picture: double points of the immersion of
    $D_0$, a collar $S^1 \times I$ of the twisted Whitney disc $D_1$,
    and the accessory disc $A_1$.  Bottom picture.  The interior
    boundary of the collar, together with the knot after the crossing
    changes from the double points, form an unlink, which can be
    capped off by two disjoint discs in $S^3 \times \{1\}$, to
    complete $D_0$ and $D_1$. Cap off $S^3 \times I$ with a copy of
    $D^4$ to obtain a Whitney tower in~$D^4$.}
    \label{figure:examples-trefoil-figure-eight}
\end{figure}

We will simultaneously discuss both examples, indicating differences
between the Whitney towers for $3_1$ and $4_1$ when they arise.  The
only difference turns out to be one sign change.  It is a
straightforward computation to see that $\pi_1(W) \cong \Z$.  Since
there is one Whitney disc and one accessory disc, we have that
$H_2(W;\Z[\Z]) \cong \Z[\Z]^2$, generated by the spheres $S_1$ and
$S_2$, constructed from the Whitney and the accessory disc
respectively, as in Section~\ref{section:construction-of-spheres}
i.e.\ $d=2$.

We apply the formula from
Theorem~\ref{theorem:compute-alex-poly-from-whitney-data}.  The
Whitney and accessory discs are disjointly embedded.  Therefore we
just need to compute the twisting coefficients $a_1$ and $b_1$.  The
accessory disc is untwisted, so $b_1=0$.  The crossing change
occurring during the top-to-bottom evolution of the bottom right of
each knot diagram, where the accessory disc is found in the middle
picture, changes a negative crossing to a positive crossing.  It is
therefore a positive intersection point, so the self-intersection of
$S_2$ is $1$.  On the other hand, the Whitney disc $D_1$ is twisted.
The linking number of the boundary of $D_1$ with the interior of the
collar $S^1 \times I$ in the middle picture, is $+1$ for the figure
eight knot,and $-1$ for the trefoil.  Therefore the twisting of the
Whitney framing relative to the disc framing is $a_1 = -1$ for the
figure eight and $a_1 = +1$ for the trefoil.  This yields the
following intersection matrices $\Omega$, using the formulae given in
the bullet points in Section~\ref{section:intersection-form}.  Recall
that $z= (1-t)(1-t^{-1}) = 2-t-t^{-1}$.  For $4_1$, we have
\[
\Omega = \begin{bmatrix}
  -z & 1 \\ 1& 1
\end{bmatrix},
\]
whose determinant is $-z-1 = t + t^{-1} -2 -1 = t + t^{-1} - 3 \doteq
\Delta_{4_1}(t).$ For $3_1$, we have the matrix
\[
\Omega =
\begin{bmatrix}
  z & 1 \\ 1& 1
\end{bmatrix},
\]
whose determinant is $z-1 = 2 - t - t^{-1} -1 = 1 - t - t^{-1}  \doteq \Delta_{3_1}(t).$

\subsection{An algorithm for computation using accessory discs only}
\label{subsection:algorithm}

By using a natural choice of accessory discs, described below, the
computation of the intersection data (and consequently the abelian
invariants) can be formulated as an algorithm, that we now describe. 

\begin{itemize}
  \item  Fix a given set of crossing changes on a planar diagram of $K$ which convert $K$ to a trivial knot; recall that such a set of crossings can be found on any knot diagram.
  \item Consider the planar diagram obtained by replacing all the
crossings to be changed with a singularity.  This is the diagram at
the level of the intersection points in a movie picture of the immersed disc in $D^4$ arising as the trace of a homotopy realising the crossing changes. The sign of the crossing change determines the sign of the intersection point of~$D_0$.  For each intersection in the singular diagram, draw a double point loop which leaves the crossing, follows along the knot agreeing with the
given orientation, leaving along one strand and returning to the
crossing along the other strand.
\item Push the loop slightly off the
singular knot, and twist the loop around the singular knot until the
linking number with the singular knot is zero i.e.\ after the crossing change the linking number with the resulting unknot is $0$.
\item Choose basing paths
for each accessory loop.
\item Now, replace the singular crossings with the outcome of each of the crossings changes, and apply
an ambient isotopy which takes the resulting unknot to the standard
unknot~$U$.
\item  Under the isotopy, the union of double point loops
becomes an oriented based link, say~$L$.  The $i$th component of $L$
will give rise to the $i$th accessory disc~$A_i$.
\item In the complement
of $U$, apply a homotopy of $L$, that is, crossing changes of $L$,
dragging the basing paths along, until $L\cup U$ becomes the trivial
link.  Here crossing changes involving different components of $L$
are allowed.  For each crossing change on $L$, record the sign of the
crossing change and the element $\ell \in \Z = \pi_1(S^3 \sm \nu U)$
determined by linking with $U$ of the usual concatenation of paths in
$L$ with the basing paths.
\item The number of twists of $L_i$ that we made away from
the blackboard framing, plus twice the signed count of self
intersections of $L_i$, determines the negative of the twisting
of~$A_i$.  With these considerations the intersection data can be completely recovered.
\end{itemize}

\subsection{Examples using accessory discs only}
\label{subsection:examples-accessory-discs-only}

Here is a detailed example of the above algorithm. Consider $K=\Wh_n^-(J)$,
the negatively clasped $n$-twisted Whitehead double of a knot~$J$.
Here negatively clasped means the signs of the crossings are negative,
and $n$-twisted means~$n$ full right handed twists; a negative right
handed twist, which appears if $n<0$, is a left handed twist.  We can
change a single crossing from a negative to a postive crossing in the
clasp to make a homotopy to the unknot $U$.  Therefore $d=1$ and
$\eps_1=1$.  The double point loop becomes a copy of the knot $J$,
twisted $-n$ times around this unknot.  Add $n$ twists to the double
point loop so that it is null homotopic in the complement of $U$.  The
null homotopy of~$J$ produces the accessory disc $A_1$.  Every double
point of $A_1$ has the trivial element of
$\pi_1(S^3 \sm \nu U) \cong \Z$ associated to it. Add local cusps of
the appropriate sign so that the signed count of double points of
$A_1$ vanishes.  The matrix $\Psi$ is then a $1 \times 1$ matrix with
entry $1 + zb_1$, where $b_1$ is the twisting coefficient.  Since we
added $n$ positive twists to the double point loop, the twisting
coefficient is $-n$, and we compute:
\[
\Delta_{K}(t) \doteq \det(\Psi) = 1-nz = 1-n(2-t-t^{-1}) = 1-2n + nt + nt^{-1}.
\]

\section{The Seifert form and the Arf invariant}\label{section:defns-of-Arf}

We are about to investigate the implications of
Theorem~\ref{theorem:compute-alex-poly-from-whitney-data} for the Arf
invariant of a knot.  First, in this section, we briefly recall the
usual definition of the Arf invariant of a knot in terms of a Seifert
form.

\begin{definition}\label{defn:arf-invariant-quadratic-form}
  A \emph{quadratic enhancement} of a symmetric bilinear form $\lambda
  \colon M \times M \to \Z_2$ on a $\Z_2$ vector space $M$ is a
  function $q \colon M \to \Z_2$ such that
  \[
  q(x) + q(y) + q(x+y) \equiv \lambda(x,y) \mod{2}	
  \]
  for all $x,y \in M$.  A quadratic form is a symmetric bilinear form
  $M,\lambda$ together with a quadratic enhancement $q$.

  Let $\{e_1,f_1,e_2,f_2,\dots,e_n,f_n\}$ be a symplectic basis for
  $M$ i.e.\ $\lambda(e_i, e_j) = 0$, $\lambda(f_i ,f_j) =0$ and
  $\lambda(e_i , f_j) = \delta_{ij}$ for all $i,j = 1,\dots,n$.  Then
  the Arf invariant of the quadratic form is
  \[
  \Arf (M,\lambda, q) := \sum_{i=1}^n\, q(e_i) q(f_i) \mod{2}.
  \]
  See \cite[Appendix]{Rourke-Sullivan-71} for the proof that this is
  well-defined.

\end{definition}

\begin{definition}\label{defn:arf-invariant-seifert-surface}
  We will define a $\Z_2$-valued quadratic enhancement on the
  $\Z_2$-valued intersection form on the first homology of a Seifert
  surface $F$ of the knot.  Represent an element of $H_1(F;\Z_2)$ by
  an oriented simple closed curve $\gamma \subset F$, and define a
  framing of its normal bundle by choosing a framing of the normal
  bundle $\nu_{F \subset S^3}$ of $F$ in $S^3$. (Using the orientation
  of $S^3$ and $\gamma$ this choice determines a framing of the normal
  bundle $\nu_{\gamma \subset F}$, and therefore a framing of
  $\nu_{\gamma \subset S^3}$ in the conventional sense).  Every simple
  closed curve in $S^3$ bounds a closed oriented Seifert surface
  $G_{\gamma}$, and the unique (up to homotopy) framing of
  $\nu_{\gamma \subset S^3}$ which extends to a framing of the normal
  bundle of $G_{\gamma}$ is the \emph{zero framing} of $\gamma$.  We
  may therefore compare the zero framing of $\gamma$ with the framing
  defined above by the embedding of $F$, to obtain an integer.  This
  measures the number of full twists in the ``band'' of the Seifert
  surface with core $\gamma$.  The modulo 2 reduction of this integer
  defines a quadratic enhancement of the $\Z_2$-intersection form on
  $H_1(F;\Z_2)$, as promised, that is a function $q \colon H_1(F;\Z_2)
  \to \Z_2$.  The Arf invariant of $K$ is
  $\Arf(H_1(F;\Z_2),\lambda,q)$.
\end{definition}

\section{Proof of Arf Invariant Theorem \ref{theorem:arf-whitney-alexander-poly}}
\label{section:proof-Arf-invariant-theorem}

\begin{proof}[Proof of Theorem~\ref{theorem:arf-whitney-alexander-poly}]
  We saw in the proof of
  Theorem~\ref{theorem:compute-alex-poly-from-whitney-data} that
  $\det(\Omega(t)) = \Delta_K(t) f(t)f(t^{-1})$ for some $f \in
  \Z[t,t^{-1}]$ with $f(1)= \pm 1$.  Thus $\det(\Omega(-1)) =
  \Delta_K(-1)f(-1)^2$.  But $f(1)=\pm 1$ implies that $f(-1)$ is odd.
  Thus $f(-1)^2 \equiv \pm 1 \mod{8}$, and so we have that
  $\Delta_K(-1) \equiv \det(\Omega(-1)) \mod{8}$.  Then observe that
  $\Omega(t)=\lambda(t)/z$, so $\Omega(-1) = \lambda(-1)/4$.  The form
  of $\lambda$ in
  Theorem~\ref{theorem:intersection-form-using-whitney-discs} implies
  that $\Omega(-1)$ reduces to the matrix $A$ in
  Lemma~\ref{lemma:levine-argument-modified} below, with $X(-1)=B$,
  $Y(-1)=C$, $x_i=4a_i$ and $y_i=4b_i$.

 \begin{lemma}\label{lemma:levine-argument-modified}
   Let $A$ be a $d \times d$ matrix over $\Z$, with $d=2k$, of the
   form $B + 4C + 4C^T$ where $C$ is upper triangular and $B$ is a
   block diagonal sum of $2 \times 2$ matrices $D_i$ of the form
   $\sbmatrix{x_i & 1 \\ 1 & 1+ y_i}$, where $x_i$ and $y_i$ are both
   a multiple of $4$.  Then $\det A \equiv (-1)^k +\sum_{i=1}^k x_i
   \mod{8}$.
\end{lemma}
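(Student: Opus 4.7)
The plan is to compute $\det A$ modulo $8$ by expanding $\det(B + 4E)$, where $E := C + C^{T}$, using the multilinearity of the determinant in its columns. This will give
\[
\det A = \sum_{T \subseteq \{1,\dots,d\}} 4^{|T|} \det M^{T},
\]
where $M^{T}$ is obtained from $B$ by replacing the columns indexed by $T$ with the corresponding columns of $E$. Since $4^{|T|}$ is divisible by $8$ whenever $|T| \ge 2$, this reduces to
\[
\det A \equiv \det B + 4\sum_{j=1}^{d} \det M^{\{j\}} \pmod{8}.
\]
The two remaining pieces will be handled separately.

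For the main term, I would compute $\det B = \prod_{i=1}^{k} \det D_{i}$ with $\det D_{i} = x_{i}(1+y_{i}) - 1$. Since $x_{i}$ and $y_{i}$ are both multiples of $4$, the product $x_{i} y_{i}$ is a multiple of $16$, so $\det D_{i} \equiv x_{i} - 1 \pmod{16}$. Expanding $\prod_{i}(x_{i}-1)$, any summand containing at least two factors $x_{i}$ is a multiple of $16$, leaving
\[
\det B \equiv (-1)^{k} - (-1)^{k}\sum_{i=1}^{k} x_{i} \pmod{8}.
\]
Using $\sum x_{i} \equiv 0 \pmod{4}$ to absorb $(1 + (-1)^{k})\sum x_{i}$ as a multiple of $8$, this rewrites as $\det B \equiv (-1)^{k} + \sum x_{i} \pmod{8}$, which already matches the claimed value.

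It therefore remains to show that $\sum_{j} \det M^{\{j\}}$ is even. The key observation will be that cofactor expansion of $\det M^{\{j\}}$ along its $j$th column, which equals $E_{*j}$, produces cofactors that coincide with the cofactors of $B$ (since all other columns of $M^{\{j\}}$ are columns of $B$), yielding the identity
\[
\sum_{j=1}^{d} \det M^{\{j\}} = \operatorname{tr}\bigl(\operatorname{adj}(B) \cdot E\bigr).
\]
Modulo $2$, $\operatorname{adj}(B)$ is block diagonal with each $2 \times 2$ diagonal block equal to $\sbmatrix{1 & 1 \\ 1 & 0}$, since every $x_{i}, y_{i}$ is even and each $\det D_{j}$ is odd. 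Meanwhile $E = C + C^{T}$ is symmetric with even diagonal (each $E_{ii} = 2C_{ii}$), so every diagonal $2\times 2$ block of $E$ reduces modulo $2$ to $\sbmatrix{0 & a \\ a & 0}$ for some $a$, and a direct computation gives
\[
\begin{pmatrix} 1 & 1 \\ 1 & 0 \end{pmatrix}\begin{pmatrix} 0 & a \\ a & 0 \end{pmatrix} = \begin{pmatrix} a & a \\ 0 & a \end{pmatrix},
\]
which has trace $2a \equiv 0 \pmod{2}$. Summing over the $k$ diagonal blocks will show $\operatorname{tr}(\operatorname{adj}(B) \cdot E) \equiv 0 \pmod{2}$, as needed.

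The hard part will be the identification in step three of $\sum_{j} \det M^{\{j\}}$ with the trace of $\operatorname{adj}(B) \cdot E$; once this is in hand, the mod $2$ calculation is brief and exploits the block structure of $\operatorname{adj}(B)$ together with the vanishing of the diagonal of $E$ forced by the upper-triangularity of $C$.
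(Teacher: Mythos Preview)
Your argument is correct. The identity $\sum_j \det M^{\{j\}} = \operatorname{tr}(\operatorname{adj}(B)\,E)$ follows immediately from cofactor expansion along the replaced column (it is essentially Jacobi's formula), so this is not as hard as you fear; and the block-diagonal form of $\operatorname{adj}(B)$ modulo~$2$, together with the vanishing of the diagonal of $E=C+C^T$ modulo~$2$, gives $\operatorname{tr}(\operatorname{adj}(B)\,E)\equiv 0\pmod 2$ exactly as you say. Your treatment of $\det B$ is also fine, including the sign manipulation using $\sum x_i\equiv 0\pmod 4$.

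Your route is genuinely different from the paper's. The paper expands $\det A$ directly via the Leibniz formula, calls an entry of $A$ \emph{special} if it is odd (these are precisely $a_{2i-1,2i}$, $a_{2i,2i-1}$, $a_{2i,2i}$), notes that every non-special entry is divisible by~$4$, and then argues that only permutation terms with at most one non-special factor survive modulo~$8$. A short combinatorial check shows there is exactly one all-special term (contributing $(-1)^k$) and exactly $k$ terms with a single non-special factor (each contributing $x_i$ modulo~$8$, using $4m\equiv -4m\pmod 8$). Your approach instead separates $A=B+4E$ and uses multilinearity in the columns, so that the ``main term'' is the full block determinant $\det B$ rather than a single Leibniz summand, and the first-order correction is packaged as $4\operatorname{tr}(\operatorname{adj}(B)\,E)$. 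This is more structural: it avoids the permutation bookkeeping and would adapt more readily to other block shapes, at the cost of invoking the adjugate identity. The paper's approach is slightly more elementary but tied to this particular pattern of odd entries.
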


In particular,
\[
(-1)^k + \sum_{i=1}^k 4a_i \equiv
\begin{cases}
  \pm 1 & \text{if } \sum_{i=1}^k a_i \equiv 0 \mod{2} \\
  \pm 3 & \text{if } \sum_{i=1}^k a_i \equiv 1 \mod{2}.
\end{cases}
\]
The count on the right hand side is exactly the number of twisted
Whitney discs modulo two.  This completes the proof of
Theorem~\ref{theorem:arf-whitney-alexander-poly} modulo the proof of
Lemma~\ref{lemma:levine-argument-modified}.
\end{proof}

The idea for Lemma~\ref{lemma:levine-argument-modified} and its proof
come from \cite[Section~3.5]{Levine:1966-1}.  The argument in this
lemma is slightly simpler since the contributions from the accessory
discs are always odd, thus the Whitney disc terms decide the outcome
modulo 8.  In the Seifert surface case considered by Levine, the
twisting of both of a dual pair of generators determine whether that
dual pair contributes to the Arf invariant.

\begin{proof}[Proof of Lemma~\ref{lemma:levine-argument-modified}]

  Following Levine, we call an element of the matrix~$A$
  \emph{special} if it is odd: these are the entries $a_{(2i-1),(2i)}
  = a_{(2i),(2i-1)}$ and $a_{(2i),(2i)}$, for $i=1,\dots,k$.  The
  remaining entries of $A$ are even and these are called
  \emph{non-special}.

  The determinant is computed as a sum of terms, where each term is a
  product of elements, one taken from each row and each column.  Note
  that all the non-special terms are in fact a multiple of $4$.  Thus
  in order for a summand of the determinant to contribute to the
  reduction modulo 8 it must be a product of elements, at most one of
  which is non-special.

  We therefore need to look at the summand containing only special
  terms (there is precisely one such summand) and the summands
  containing precisely one non-special term.  The only summand of the
  determinant which contains only special terms is
  \[
  \prod_{i=1}^k - a_{(2i-1),(2i)} a_{(2i),(2i-1)} = (-1)^k \prod_{i=1}^k a_{(2i-1),(2i)}^2 .
  \]
  Since $a_{(2i-1),(2i)} = 1+ 4n_i$ for some $n_i \in \Z$, we have
  that $a_{(2i-1),(2i)}^2 \equiv 1 \mod{8}$ so that modulo $8$ the
  contribution is $(-1)^k$.

  There is a summand with precisely one non-special term for each
  $i=1,\dots,k$, of the form:
  \begin{multline*}
    a_{(2i-1),(2i-1)}a_{(2i),(2i)} \prod_{1 \leq j < i, i < j \leq k} - a_{(2j-1),(2j)} a_{(2j),(2j-1)} \\
    = (-1)^{k-1} a_{(2i-1),(2i-1)}a_{(2i),(2i)} \prod_{1 \leq j < i, i < j \leq k}  a_{(2j-1),(2j)}^2.
  \end{multline*}

  Let $\a_i$ and $\beta_i$ be such that $x_i=4\a_i$ and
  $y_i=4\beta_i$.  We also may write $a_{(2i-1),(2i-1)} = 4 \a_i + 8
  m_i$, $a_{(2i),(2i)} = 1+ 4 \beta_i + 8 \ell_i$ and finally
  $a_{(2j-1),(2j)} = 1+ 4n_j$ as above.  Thus modulo 8 we have that
  each summand
  \[
  (-1)^{k-1} a_{(2i-1),(2i-1)}a_{(2i),(2i)} \prod_{1 \leq j < i, i < j
    \leq k} a_{(2j-1),(2j)}^2 \equiv 4 \a_i \mod{8}.
  \]

  Combining the contributions to the determinant of the summand with
  all special terms and the $k$~summands with precisely one
  non-special term, we have that $\det A = (-1)^k + \sum_{i=1}^k x_i$
  as claimed.
\end{proof}

\section{The Blanchfield form}\label{section:blanchfield}

  In this section we show that the matrices $\Omega$ and
$\Psi$ present a linking form in the Witt class of the Blanchfield
form of $K$, and that the form they present is isometric to the
Blanchfield form of $K$ in the case that the immersed disc $D_0$
arises from crossing changes on $K$.  This will prove the Blanchfield
form statements of
Theorems~\ref{theorem:compute-alex-poly-from-whitney-data},
\ref{theorem:main-presentation-Blanchfield}
and~\ref{theorem:main-blanchfield-alexander}.

Let $R=\Z[\Z]$, and let $Q=\Q(\Z)$ be its quotient field.  A
\emph{linking form} is defined to be a sesquilinear, hermitian,
nonsingular form $\beta\colon V\times V \to Q/R$ with $V$ a finitely
generated torsion $R$-module.  Suppose $M$ is a 3-manifold over $\Z$,
that is, $M$ is endowed with a homomorphism $\pi_1(M)\to \Z$.  Suppose
$H_1(M;R)$ is torsion over $R$ and the map $H_1(\partial M;R) \to H_1(M;R)$ is the zero map.
  Then the \emph{Blanchfield form} \cite{Blanchfield:1957-1} of $M$ is defined to
be the linking form
\[
\Bl\colon H_1(M;R) \times H_1(M;R) \to Q/R,
\]
whose adjoint $\Bl^*$ coincides with the composition of isomorphisms
\begin{multline*}
  H_1(M;R) \to H_1(M,\partial M;R) \to H^2(M;R) \\
  \to H^1(M;Q/R) \to
  \ol{\Hom_{R}(H_1(M;R),Q/R)}.
\end{multline*}
That is, $\Bl^*(y)(x) = \Bl(x,y)$.  Here the bar denotes the use of
the involution on $R$ to convert from a right module to a left module.
The morphisms above are given by the long exact sequence of the pair
$(M,\partial M)$, Poincar\'e duality, a Bockstein homomorphism and
universal coefficients, respectively.  The proof that they are all
isomorphisms can be found in~\cite{Levine-77-knot-modules}.

The Blanchfield form of an oriented knot $K$ is defined to be that of
the exterior $X_K$ endowed with the homomorphism $\pi_1(M)\to \Z$ that sends a
positive meridian to the generator~$t$.

\begin{definition}
  \label{defn:presentation-of-blanchfield-form}
  We say that an $n\times n$ hermitian matrix $A=A(t)$ over $R$
  \emph{presents} a linking form $\beta$ if $\beta$ is isometric to
  the sesquilinear pairing
  \[
  (R^{n}/A\cdot R^n) \times (R^n/A\cdot R^n) \to Q / R
  \]
  given by $([x],[y]) \mapsto - y^T \cdot \overline{A}^{-1} \cdot \overline{x}$.
\end{definition}

\begin{lemma}
  \label{lemma:boundary-blanchfield}
  Suppose $W$ is a 4-manifold with $\pi_1(W)=\Z$ so that
  $\partial W=M$ is over $\Z$, and $\Lambda=\Lambda(t)$ is a matrix
  representing the $R$-valued intersection form on $H_2(W;R)$.  Then
  $\Lambda$ presents the Blanchfield form of~$M$.
\end{lemma}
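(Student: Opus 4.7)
The plan is to derive the lemma in two stages: first establishing that $\Lambda$ presents $H_1(M;R)$ as an $R$-module, and then identifying the pairing this presentation induces with the Blanchfield form.

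For the presentation, I would begin with the long exact sequence of $(W,M)$ with $R$-coefficients. Since $\pi_1(W)=\Z$, we have $H_1(W;R)=0$, so the sequence gives a surjection
\[
H_2(W;R) \xrightarrow{j} H_2(W,M;R) \to H_1(M;R) \to 0.
\]
Exactly as in Lemma~\ref{lemma:little-UCSS-computation}, Poincar\'e--Lefschetz duality $H_2(W,M;R)\cong H^2(W;R)$ combined with the universal coefficient spectral sequence (using $H_1(W;R)=0$ and that $R$ has homological dimension $2$) identifies $H_2(W,M;R)$ with $\ol{\Hom_R(H_2(W;R),R)}$, and under this identification $j$ becomes the adjoint of the intersection form, represented by $\Lambda$ with respect to a basis of $H_2(W;R)\cong R^n$ and its dual basis of $H_2(W,M;R)\cong R^n$. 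Since $H_1(M;R)$ is $R$-torsion and $H_2(W;R)$ and $H_2(W,M;R)$ are free of the same finite rank, $\Lambda$ has nonzero determinant over $Q$ and is in particular injective, yielding the short exact sequence
\[
0 \to R^n \xrightarrow{\Lambda} R^n \to H_1(M;R) \to 0,
\]
which identifies $H_1(M;R)$ with $R^n/\Lambda R^n$.

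To identify the pairing, the plan is to read the definition of $\Bl$ through the 4-manifold $W$. Given $[x],[y]\in H_1(M;R)$, lift them to $\tilde x,\tilde y\in H_2(W,M;R)\cong R^n$. Since $\Lambda$ is invertible over $Q$, there is a unique $\eta\in Q^n$ with $\Lambda\eta=\tilde y$; geometrically this is a rational 2-chain in $W$ whose boundary is $q\cdot y$ for some $q\in R\setminus\{0\}$. Evaluating on the functional represented by $\tilde x$ produces an element of $Q$ whose image in $Q/R$ is, up to the sign and conjugation of the chosen sesquilinearity convention, $-\tilde y^{T}\,\ol{\Lambda}^{-1}\,\ol{\tilde x}$. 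Verifying that this expression represents $\Bl([x],[y])$ amounts to identifying it with the composition
\[
H_1(M;R)\to H^2(M;R)\to H^1(M;Q/R)\to \ol{\Hom_R(H_1(M;R),Q/R)}
\]
defining the Blanchfield form.

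The main obstacle is precisely this last identification: assembling a commutative diagram that ties together (a) the long exact sequences of $(W,M)$ in $R$- and $Q$-coefficients, (b) Poincar\'e--Lefschetz duality for $W$ together with Poincar\'e duality for the closed 3-manifold $M$ (relating the connecting homomorphism $H_2(W,M;R)\to H_1(M;R)$ to its cohomological counterpart), and (c) the Bockstein of $0\to R\to Q\to Q/R\to 0$. Naturality of cap products and of the Bockstein, together with the hermitian property $\Lambda=\ol{\Lambda}^T$, then force the formula in Definition~\ref{defn:presentation-of-blanchfield-form}. The sign and conjugation bookkeeping is delicate but routine, and the identification is standard once the conventions are pinned down; see for example~\cite{Friedl-Powell:2016-1} for a closely related computation with Seifert surfaces, which we would follow with the intersection form of $W$ replacing the Seifert form.
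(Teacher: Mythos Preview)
Your proposal is correct and follows essentially the same route as the paper. Both arguments first use the long exact sequence of $(W,M)$ together with the freeness of $H_2(W;R)$ and $H_2(W,M;R)$ (from Lemmas~\ref{lemma:pi-2-free-module} and~\ref{lemma:little-UCSS-computation}) to see that $\Lambda$ presents $H_1(M;R)$, and then identify the induced pairing with $\Bl$ by lifting classes to $H_2(W,M;R)$ and inverting $\Lambda$ over~$Q$. The only substantive difference is packaging: the paper encodes the diagram chase you outline as a single composition $\Phi\colon H_2(W,M;R)\to\overline{\Hom_R(H_2(W,M;R),Q/R)}$ passing through $H_2(W;Q)$ and $H_2(W;Q/R)$, and appeals to \cite[Lemma~3.3]{Cha:2006-1} for the identity $\Bl(\partial x,\partial y)=-\Phi(x)(y)$, whereas you propose to verify that identity directly via naturality of duality and the Bockstein (or by following~\cite{Friedl-Powell:2016-1}). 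Either citation discharges the same obligation.
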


\begin{proof}
  First, $H_1(W;R)=0$ since $\pi_1(W)=\Z$.  In addition, $H_2(W;R)$
  and $H_2(W,M;R)$ are free $R$-modules of the same rank, by
  Lemmas~\ref{lemma:pi-2-free-module}
  and~\ref{lemma:little-UCSS-computation}.  Since $\Lambda$ represents
  $H_2(W;R) \to H_2(W,M;R)$, it follows that $\Lambda$ is a
  presentation matrix for $H_1(M;R)$.  Here we fix an arbitrary basis
  for $H_2(W;R)$ and use the dual basis for $H_2(W,M;R)$ as usual.

  Let $\Phi$ be the composition
  \begin{multline*}
    \Phi\colon H_2(W,M;R) \to H_2(W,M;Q) \xrightarrow{\cong}
    H_2(W;Q) \to H_2(W;Q/R)
    \\
    \xrightarrow[PD]{\cong} H^2(W,M;Q/R) \to
    \overline{\Hom_R(H_2(W,M;R),Q/R)}.
  \end{multline*}
  It is known that $\Bl(\partial x, \partial y) = -\Phi(x)(y)$ for any
  $x,y\in H_2(W,M;R)$; see for instance \cite[Lemma~3.3]{Cha:2006-1}.
  (Our sign convention is opposite to that of~\cite{Cha:2006-1}.)

  Using bases for $H_2(W;-)$ induced by our fixed basis and using the
  dual basis for $H_2(W,M;-)$ once again, all the arrows but the
  second in the above definition of $\Phi$ are represented by the
  identity matrix.  The second arrow is the inverse of the
  intersection pairing, and thus represented by~$\Lambda^{-1}$.  It
  follows that
  \[
    \Bl(\partial x,\partial y) = - y^T \cdot \overline{\Lambda^{-1}x} =
    - y^T\cdot\overline{\Lambda}^{-1}\cdot\overline{x}. \qedhere
  \]
\end{proof}

Now consider the case of $W=D^4\sm \nu D_0$.  Construct the following
commutative diagram, as explained below the diagram:
\[
  \xymatrix@R=2em@C=3em{
    H_2(W;R) \ar[r]^-{\Lambda} \ar@{=}[dd] &
    H_2(W,\partial W;R) \ar[r]^-{\partial} \ar@{=}[d]^{\text{Lemma~\ref{lemma:little-UCSS-computation}}} &
    H_1(\partial W;R) \ar@{=}[d]^{\text{Proposition~\ref{prop:order-homology-partial-W-new}}} \ar[r] &
    0
    \\
    &
    H_2(W;R)^* &
    \hbox to 0mm{\hss$H_1(X_K;R)\oplus (R/\langle z\rangle)^d$\hss}
    \\
    H_2(W;R) \ar[r]^-{A} &
    %\hbox to 0mm{\hss $z\cdot H_2(W;R)^* =:{}$}
    N \ar@{_(->}[]+<0mm,+2.5ex>;[u]_-{zI} \ar[r]^-{\partial|_N} &
    H_1(X_K;R) \ar@{_(->}[]+<0mm,+2.5ex>;[u]_-{\text{summand}} \ar[r] &
    0
  }.
\]
The top row is a part of the long exact sequence for $(W,\partial W)$.
Let $N:=z\cdot H_2(W;R)^*$.  Since $H_2(W;R)^* \cong R^d$,
$N\cong R^d$.  Since
$\partial(N) \subset z\cdot H_1(\partial W;R) \subset H_1(X_K;R)$,
$\partial$ induces $\partial|_N \colon N \to H_1(X_K;R)$.  Since $1-t$
is an automorphism on $H_1(X_K;R)$, so is $z=(1-t)(1-t^{-1})$.  It
follows that $\partial|_N$ is surjective.  Also, since
$H_2(W;R)^*/N\cong (R/\langle z\rangle)^d$, the image of $\Lambda$
lies in~$N$.  So there is $A\colon H_2(W;R) \to N$ making the diagram
commute.  Multiply our (dual) basis for $H_2(W;R)^*$ by $z$ to obtain
a basis for~$N$.  With respect to this, the inclusion
$N\to H_2(W;R)^*$ is (represented by) the diagonal matrix $zI$.  So
$\Lambda=z A$ as matrices.

\begin{claim*}
  The matrix $A$ presents the Blanchfield form $\Bl_{X_K}$ of~$K$.
\end{claim*}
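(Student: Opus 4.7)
The plan is to derive the claim from Lemma~\ref{lemma:boundary-blanchfield}, using the direct sum decomposition of $H_1(\partial W;R)$ from Proposition~\ref{prop:order-homology-partial-W-new} to extract the Blanchfield form of the $X_K$ summand.

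First, by Lemma~\ref{lemma:boundary-blanchfield}, $\Lambda=zA$ presents $\Bl_{\partial W}$ on $H_1(\partial W;R)\cong R^d/\Lambda R^d$. Since $\Delta_K(1)=\pm 1$, the element $z=(1-t)(1-t^{-1})$ is coprime to $\Delta_K(t)$, so multiplication by $z$ acts invertibly on $H_1(X_K;R)$; meanwhile $z$ annihilates the summand $H_1(\partial_+;R)=(R/\langle z\rangle)^d$. Therefore the $H_1(X_K;R)$-summand inside $H_1(\partial W;R)$ is exactly $zR^d/\Lambda R^d$, and the map $[x]\mapsto [zx]$ induces an isomorphism $R^d/AR^d\xrightarrow{\cong} H_1(X_K;R)$. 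This already shows that $A$ presents $H_1(X_K;R)$ as an $R$-module. Also $A$ is hermitian, since $\Lambda=zA$ is hermitian and $\bar z=z$.

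Next, using $\bar z=z$ and $\bar\Lambda^{-1}=z^{-1}\bar A^{-1}$, a direct substitution into the presentation formula from Lemma~\ref{lemma:boundary-blanchfield} gives
\[
\Bl_{\partial W}([zx],[zy]) \;=\; -(zy)^T\bar\Lambda^{-1}\overline{zx} \;=\; -z\,y^T\bar A^{-1}\bar x.
\]
So under the isomorphism $R^d/AR^d\cong H_1(X_K;R)$, the restriction $\Bl_{\partial W}|_{H_1(X_K;R)}$ pulls back to the form $([x],[y])\mapsto -z\,y^T\bar A^{-1}\bar x$ on $R^d/AR^d$.

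The crucial remaining step is to establish the scaling relation
\[
\Bl_{\partial W}(i_*\alpha,i_*\beta) \;=\; z\cdot\Bl_{X_K}(\alpha,\beta), \qquad \alpha,\beta\in H_1(X_K;R),
\]
where $i\colon X_K\hookrightarrow\partial W$ is the inclusion. Granting this, since $z$ acts invertibly on $H_1(X_K;R)$ we may cancel $z$ from both sides to conclude $\Bl_{X_K}([x],[y])=-y^T\bar A^{-1}\bar x$ on $R^d/AR^d$, which is precisely the statement that $A$ presents $\Bl_{X_K}$.

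Proving the scaling relation is the main obstacle. I would approach it geometrically: for $\alpha\in H_1(X_K;R)$, one computes $\Bl_{X_K}(\alpha,-)$ by lifting $\alpha$ to a relative 2-chain in $(X_K, T)$ whose boundary is a multiple of $\alpha$, whereas $\Bl_{\partial W}(i_*\alpha,-)$ is computed from an absolute 2-chain in $\partial W$. The two lifts differ by a relative 2-chain in $(\partial_+, T)$, and the calculation $H_2(\partial_+, T;R)\cong R/(t-1)$ together with the structure of $H_1(\partial_+;R)=(R/\langle z\rangle)^d$ (whose generators are meridional loops annihilated precisely by $z$) produces the factor of $z=(1-t)(1-t^{-1})$ in the linking pairing. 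An alternative route that avoids this direct comparison is to prove a relative analog of Lemma~\ref{lemma:boundary-blanchfield} for the pair $(W, X_K)$, using Lefschetz duality $H_2(W, X_K;R)\cong H^2(W,\partial_+;R)$ together with the identification $H_2(W, X_K;R)\cong N$ established via the long exact sequence of the triple $(W,\partial W, X_K)$ (noting that $H_3(W,\partial W;R)\cong R/(t-1)$ surjects onto $H_2(\partial W, X_K;R)\cong H_2(\partial_+, T;R)\cong R/(t-1)$ via the Seifert surface class). Such a relative version would deliver the pairing formula $-y^T\bar A^{-1}\bar x$ directly for $\Bl_{X_K}$, without routing through $\Bl_{\partial W}$.
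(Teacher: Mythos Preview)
Your computation up through $\Bl_{\partial W}([zx],[zy]) = -z\,y^T\bar A^{-1}\bar x$ is correct and agrees with the paper.  The gap is the ``scaling relation''
\[
\Bl_{\partial W}(i_*\alpha,i_*\beta) \;=\; z\cdot\Bl_{X_K}(\alpha,\beta),
\]
which is false.  The Blanchfield pairing is natural under the codimension-zero inclusion $i\colon X_K\hookrightarrow\partial W$: if $a,b$ are $1$-cycles in $X_K$ representing $\alpha,\beta$ and $c$ is a $2$-chain in $X_K$ with $\partial c = p\,a$ (such $c$ exists in $X_K$ because $H_1(X_K;R)$ is $R$-torsion), then the same $c$ computes the pairing in $\partial W$, giving
\[
\Bl_{\partial W}(i_*\alpha,i_*\beta)=\Bl_{X_K}(\alpha,\beta).
\]
There is no extra factor of~$z$; your geometric sketch about ``the two lifts differing by a relative $2$-chain in $(\partial_+,T)$'' does not apply, since no second lift is needed.

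Using the correct relation, one obtains $\Bl_{X_K}([x],[y]) = -z\,y^T\bar A^{-1}\bar x$ on $R^d/AR^d$, which is \emph{not yet} the form presented by~$A$.  The paper removes the surplus $z$ by a change of identification: multiplication by $1-t$ is an automorphism of $R^d/AR^d\cong H_1(X_K;R)$, and since $(1-t)\overline{(1-t)}=z$ one checks
\[
-\big((1-t)y\big)^T \bar A^{-1}\,\overline{(1-t)x} \;=\; -z\,y^T\bar A^{-1}\bar x.
\]
Thus $(1-t)$ is an isometry from $(R^d/AR^d,\;-z\bar A^{-1})$ to $(R^d/AR^d,\;-\bar A^{-1})$, and the latter is by definition the form presented by~$A$.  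This $(1-t)$ step is the missing ingredient in your argument; once you replace the false scaling relation by naturality plus this isometry, the proof goes through.
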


To prove the claim, first observe that the Blanchfield form
$\Bl_{\partial W}$ of $\partial W$ is given by
\[
  \Bl_{\partial W}(\partial u,\partial v) = -y^T \overline\Lambda^{-1}
  \overline{x} = -z^{-1} v^T \overline A^{-1} \overline{u}
\]
for $u,v\in H_2(W;R)^*$, by Lemma~\ref{lemma:boundary-blanchfield}.
Using that the bottom row of the above diagram is exact, identify
$H_1(X_K;R)$ with $R^d/A\cdot R^d = N/\Im\{A\}$.  Then, from the above
description of $\Bl_{\partial W}$, it follows that
\[
  \Bl_{X_K} \colon (R^d/A\cdot R^d) \times (R^d/A\cdot R^d) \to Q/R
\]
is given by
$(x,y) \mapsto \Bl_{\partial W} (x, y) = -y^T (z\overline A^{-1})
\overline x$.
Since $z=(1-t)(1-t^{-1})$, it is straightforward to see that the
following diagram is commutative:
\[
  {\newsavebox{\tempbox}\sbox{\tempbox}{$(\R^d/A\cdot R^d) \times {}$}
    \xymatrix@C=6em{
      (R^d/A\cdot R^d) \times (R^d/A\cdot R^d)
      \ar@<-.5\wd\tempbox>[d]_{1-t} \ar@<.5\wd\tempbox>[d]^{1-t} \ar[r]^-{-z\overline A^{-1}} &
      Q/R \ar@{=}[d]
      \\
      (R^d/A\cdot R^d) \times (R^d/A\cdot R^d) \ar[r]_-{-\overline{A}^{-1}} &
      Q/R
    }
  }
\]
Since $1-t$ is an automorphism on $R^d/A\cdot R^d=H_1(X_K;R)$, it
follows that $A$ presents~$\Bl_{X_K}$, as claimed above.

In the case that the submodule $F$ generated by our 2-spheres is equal
to $H_2(W;R)$, for example in the special case of an immersed disc
arising from crossing changes on $K$, we have $\Lambda=\lambda=z\Psi$,
that is, $A=\Psi$.  This completes the proof of the Blanchfield form
assertion of Theorem~\ref{theorem:main-blanchfield-alexander}.

In general, namely when $F$ is not necessarily $H_2(W;R)$, let
$P=P(t)$ be the square matrix representing the inclusion
$R^d \cong F\to H_2(W;R)\cong R^d$.  The matrix $P(1)$ is unimodular
over $\Z$, since our spherical basis elements of $F$ descend to a
basis of $H_2(W;\Z)$.

Construct the following commutative diagram, as explained below:
\[
  \xymatrix{
    H_2(W;R) \ar[r]^-A &
    N \ar@{^(->}[]+<1.1em,0mm>;[r]^-{zI} \ar[d]^{P^*}&
    H_2(W;R)^* \ar[d]^-{P^*}
    \\
    F \ar[r]_{\Omega} \ar[u]^-{P} &
    zF^* \ar@{^(->}[]+<1.3em,0mm>;[r]_-{zI}& F^* }
\]
First, choosing the natural basis for $zF^*\subset F^*$ as we did for
$N\subset H_2(W;R)^*$, the inclusion $zF^* \hookrightarrow F^*$ is the
diagonal matrix $zI$.  Since $F\to F^*$ is the intersection matrix
$\lambda=z\Omega$ (or~$z\Psi$), the map $F\to zF^*$ is represented by
the matrix $\Omega$ as in the above diagram.  Since $P^*$ is
$R$-linear, it takes $N=z\cdot H_2(W;R)^*$ to~$zF^*$, namely $P^*$
induces the middle vertical arrow in the above diagram.  Furthermore,
with respect to our basis for $zF^*$, the induced homomorphism
$N\to zF^*$ is represented by the same matrix~$P^*$.

From the above diagram, it follows that $\Omega=PAP^*$.  By the
following lemma, $\Omega$ presents a linking form which is Witt
equivalent to the Blanchfield form of~$X_K$.  This completes the proof
of the Blanchfield form assertions of
Theorems~\ref{theorem:compute-alex-poly-from-whitney-data}
and~\ref{theorem:main-presentation-Blanchfield}.

\begin{lemma}[Ranicki]
  The two linking forms presented by hermitian matrices $A(t)$ and
  $P(t)A(t)P(t^{-1})^T$ are Witt equivalent, where $\det P(1)=\pm1$
  and $\det A(t) \neq 0$.
\end{lemma}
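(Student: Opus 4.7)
My plan is to show that the orthogonal sum $(R^n/AR^n,\beta_A)\oplus (R^n/PAP^*R^n,-\beta_{PAP^*})$ admits a Lagrangian submodule $L=L^\perp$, which witnesses Witt equivalence of the two presented forms. This orthogonal sum is presented by the block-diagonal hermitian matrix $B=\sbmatrix{A & 0 \\ 0 & -PAP^*}$ on $R^{2n}$, so it suffices to exhibit a metabolizer for the linking form presented by $B$.

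I would first simplify $B$ by the unimodular congruence given by $M=\sbmatrix{I & P^* \\ 0 & I}$. This matrix has entries in $R$ and $\det M=1$, so congruence by $M$ induces a genuine isometry of the presented linking forms. A direct block-matrix calculation gives
\[
C:=M^* B M=\begin{bmatrix} A & AP^* \\ PA & 0 \end{bmatrix},
\]
whose essential feature is the zero block in the lower right, which is the standard source of Lagrangians for forms presented by matrices of this shape. As candidate Lagrangian I take the image $L\subseteq R^{2n}/CR^{2n}$ of the inclusion $R^n\hookrightarrow R^{2n}$, $x\mapsto [(x,0)]$. The containment $L\subseteq L^\perp$ is immediate once one observes that the $(1,1)$-block of $\overline{C}^{-1}$ vanishes (by a Schur complement calculation using the zero block of $C$). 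For the reverse containment, compute $\overline{C}^{-1}$ explicitly and read off that $[(u,v)]\in L^\perp$ if and only if $v\in PAR^n$; writing $v=PAw$ with $w\in R^n$ and using $C\cdot(w,0)^T=(Aw,PAw)^T\equiv 0$, one obtains $[(u,v)]\equiv[(u-Aw,0)]\in L$. This gives $L=L^\perp$, as required.

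The main obstacle lies in the explicit computation of the block-matrix inverse $\overline{C}^{-1}$ and in verifying that the condition derived from $\beta([(u,v)],[(x,0)])\in R$ for all $x\in R^n$ really does force $v\in PAR^n$ with a lift $w$ in $R^n$ rather than merely in the fraction field $Q$; this uses only the nondegeneracy of the standard pairing $R^n\times R^n\to R$, so is routine but must be done carefully to track the involution and the transpose. The hypothesis $\det P(1)=\pm 1$ is not needed for the metabolizer argument itself; its role in the surrounding application is to ensure that $PAP^*$ presents a linking form of the same ``Blanchfield type'' as $A$ (with matching augmentation behaviour at $t=1$), so that the metabolic identification produced above is meaningful in the Witt group used to interpret $\Omega$ as representing the Blanchfield class of $K$.
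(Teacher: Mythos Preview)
Your argument is correct: the unimodular congruence by $M=\sbmatrix{I & P^*\\0 & I}$ carries $B$ to $C=\sbmatrix{A & AP^*\\ PA & 0}$ and yields an isometry of the presented linking forms (via $[x]\mapsto[(M^*)^{-1}x]$), and the block computation of $C^{-1}$ over $Q$ shows that the image of the first $R^n$ summand is a metabolizer. Your verification that $[(u,v)]\in L^\perp$ forces $v\in PAR^n$, followed by the reduction using $C(w,0)^T=(Aw,PAw)^T$, is clean and complete.

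This is genuinely different from the paper's own treatment: the paper does not prove the lemma directly but simply cites Ranicki's algebraic $L$-theory, invoking the fact that the linking form presented by a matrix is the ``boundary'' of the corresponding form, and that an $S$-isomorphism (with $S$ the nonzero polynomials in $\Z[\Z]$) preserves boundaries up to Witt equivalence. Your approach is more elementary and self-contained, avoiding the $L$-theoretic boundary formalism entirely in favour of an explicit Lagrangian. One small correction: the hypothesis $\det P(1)=\pm1$ \emph{is} used in your argument, though only in the weak form $\det P(t)\neq 0$, which you need so that $PA$ and $AP^*$ are invertible over $Q$ and $C^{-1}$ exists; without this the Schur-complement step fails. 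This matches the paper's own remark that the role of the hypothesis is precisely to make $P$ an isomorphism over $\Q(\Z)$, so your claim that the full augmentation condition at $t=1$ is not needed for the Witt equivalence itself is in agreement with the paper's reading of Ranicki.
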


\begin{proof}
  This lemma appears on Ranicki \cite[p.~268]{Ranicki:1981-1}, in the
  proof of his Proposition 3.4.6~(ii).  To make the translation from
  Ranicki's notation to ours without having to read too much of
  \cite{Ranicki:1981-1}, one needs to know that the boundary of a form
  is the linking form presented by a matrix representing that form,
  and the fact that $\det P(1)=\pm1$ implies that $P$ is an
  isomorphism over $\Q(\Z)$, that is $P$ corresponds to an
  $S$-isomorphism, with $S$ the nonzero polynomials in $\Z[\Z]$.
\end{proof}

\bibliographystyle{alpha}
\def\MR#1{}
\bibliography{research}

\end{document}